 \numberwithin{equation}{section}
 \newtheorem{remark}{Remark}[section]
 \newtheorem{example}{Example}
\begin{document}

\title{An efficient implementation algorithm for quasi-Monte Carlo approximations of high-dimensional integrals}

\headers{AN EffICEINT ALGRITHM FOR HIGH-DIMENSIONAL INTEGRATION}{HUICONG ZHONG AND XIAOBING FENG} 

\author{
	Huicong Zhong\thanks{School of Mathematics and Statistics, Northwestern Polytechnical University, Xi’an, Shaanxi 710129, China (\email{huicongzhong@mail.nwpu.edu.cn}).
     }
	\and 	
	Xiaobing Feng\thanks{Department of Mathematics, The University of Tennessee,  Knoxville, TN, 37996 (\email{xfeng@utk.edu}).} 
}

\maketitle

\begin{abstract}
In this paper, we develop and test a fast numerical algorithm, called MDI-LR,  for efficient implementation of quasi-Monte Carlo lattice rules for computing $d$-dimensional integrals of a given function.  It is based on the idea of converting and improving the underlying lattice rule into a tensor product rule by an affine transformation, and adopting the multilevel dimension iteration approach which computes  the function evaluations (at the integration points) in the tensor product  multi-summation in cluster and iterates along each (transformed) coordinate direction so that a lot of computations can be reused. 
The proposed algorithm also eliminates the need for storing integration points and computing function values independently at each point.
Extensive numerical experiments are presented to gauge the performance of the algorithm MDI-LR and to compare it with standard implementation of quasi-Monte Carlo lattice rules. It is also showed numerically that the algorithm MDI-LR can achieve a computational complexity of order $O(N^2d^3)$ or better, where $N$ represents the number of points in each (transformed) coordinate direction and $d$ standard for the dimension.  Thus, the algorithm MDI-LR effectively 
overcomes the curse of dimensionality and revitalizes QMC lattice rules for  high-dimensional integration. 
\end{abstract}

\begin{keywords}
Lattice rule(LR), multilevel dimension iteration (MDI), Monte Carlo methods, Quasi-Monte Carlo(QMC) method, high-dimensional integration.
\end{keywords}

\begin{AMS}
65D30, 65D40, 65C05, 65N99
\end{AMS}

\section{Introduction}\label{sec-1}
Numerical integration is an essential tool and building block in many scientific and engineering fields which requires to evaluate or estimate integrals 
of given (explicitly or implicitly) functions, which becomes very challenging in high dimensions due to the so-called {\em curse of the conditionality} (CoD). 
They are seen in evaluating quantities of stochastic interests, solving high-dimensional partial differential equations, or computing value functions of 
an option of a basket of securities. 
The goal of this paper is to develop and test an efficient algorithm based on quasi-Monte Carlo methods for evaluating the $d$-dimensional  integral
\begin{equation}\label{eq1.1}
	I_{d}(f):=\int_\Omega  f(\mathbf{x})d\mathbf{x} 
\end{equation}
for a given function $f: \Omega:=[0,1]^d\to \mathbf{R}$ and $d>>1$ .

Classical numerical integration methods, such as  tensor product and sparse grid methods \cite{BG14,GG} as well as Monte Carlo(MC)  methods  \cite{Caflisch98,Ogata89} require the evaluation of a function at a set of integration points. The computational complexity of the first two types of  methods grows exponentially with the dimension $d$ in the problem (i.e.., the CoD), which limits their practical usage. 
Monte Carlo (MC) methods  are often the default methods for high dimensional integration problems due to their ability of handling complicated functions and mitigating the CoD. We recall that the MC method approximates the  integral by randomly sampling points within the integration domain and averaging their function values. The classical MC method has the form
\begin{equation}\label{eq1.2}
	Q_{n,d}(f)=\frac{1}{n}\sum_{i=0}^{n-1}f(\mathbf{x}_i),  
\end{equation}
where $\{ \mathbf{x}_i\}_{i=0}^{n-1}$ denotes independent and uniformly distributed random samples in the integration domain $\Omega$. The expected error for the MC  method is proportional to  $\frac{\sigma(f)}{\sqrt{n}}$, where $\sigma(f)^2$ stands for the variance of $f$.  If $f$ is square-integrable then the expected error in \eqref{eq1.2} has the order $O(n^{-\frac12})$ (note that the convergence rate is independent of the dimension $d$). Evidently, the MC method is simple and easy to implement, making them a popular choice for many applications. However, the MC method is slow to converge, especially for high-dimensional problems, and the accuracy of the approximation depends on the number of random samples. One way to improve the convergence rate of the Monte Carlo method is to use quasi-Monte Carlo methods.

Quasi-Monte Carlo (QMC) methods \cite{DKS13,KSS11} employ integration techniques that use point sets with better distribution properties than random sampling.  Similar to the MC method, the QMC method  also has the general form  \eqref{eq1.2},  but unlike the MC method,   the integration points   $\{ \mathbf{x}_i\}_{i=0}^{n-1}\in \Omega$ are chosen deterministically and methodically.  The deterministic nature of the QMC method  could lead to guaranteed error bounds and that the convergence rate could be faster than the $O(n^{-\frac12})$  order of the MC  method  for sufficiently smooth functions. 
QMC error bounds are typically given in the form of Koksma-Hlawka-type inequalities as follows:
\begin{equation}
	|I_d(f)-Q_{n,d}(f)|\leq D(\mathbf{x}_0,\mathbf{x}_1,\cdots,\mathbf{x}_{n-1})V(f),
\end{equation}
where $D(\mathbf{x}_0,\mathbf{x}_1,\cdots,\mathbf{x}_{n-1})$ is a (positive) discrepancy function which measures the non-uniformity of the
point set $\{ \mathbf{x}_i\}_{i=0}^{n-1}$ and $V (f)$ is a (positive) functional which measures the variability of $f$.  
Error bounds of this type separate the dependence on the cubature points from the dependence on the integrand. The QMC point sets with discrepancy
of order $O(n^{-1}(\log n)^d)$ or better are collectively known as low-discrepancy point sets \cite{HNie}.
 
 One of the most popular QMC methods is the lattice rule, whose integration points are chosen to have a lattice structure,  low-discrepancy, and better distribution properties than random sampling \cite{Korobov,Sloan,XW}, hence, resulting in a more accurate method with  faster convergence rate. However, traditional lattice rules still have limitations when applied to high-dimensional problems. Good lattice rules almost always involve searching (cf. \cite{SH,AC}), the cost of an  exhaustive search (for $n$ fixed) grows exponentially with the dimension $d$.  Moreover, like the MC method,  the number integration points required to achieve a reasonable accuracy also increases exponentially with the dimension $d$ (i.e., the CoD phenomenon), which makes the method computationally infeasible for very high-dimensional integration problems. 
 
 To overcome the limitations of QMC  lattice rules, we first propose an improved QMC lattice rule based on a change of variables and reformulate it as 
 a tensor product rule in the transformed coordinates.  We then develop an efficient implementation algorithm, called MDI-LR,  by adapting  the multilevel dimension iteration (MDI) idea first proposed by the authors in \cite{Feng-Zhong}, for the improved QMC lattice rule.  The proposed MDI-LR algorithm optimizes the function evaluations at integration points by clustering them and sharing computations via a symbolic-function based dimension/coordinate iteration procedure.  This MDI-LR algorithm significantly reduces the computational complexity of the QMC lattice rule from an exponential growth in 
 dimension $d$ to a polynomial order $O(N^2d^3)$, where $N$ denotes the number of integration points in each (transformed) coordinate direction.
 Thus, the MDI-LR effectively overcomes the CoD and revitalizes QMC lattice rules for  high-dimensional integration. 
 
 The remainder of this paper is organized as follows. In Section \ref{sec-2}, we first briefly review the rank-one  lattice rule and its properties. In Section \ref{sec-3},  we introduce a reformulation of this lattice rule and proposed a tensor product generalization based on an affine transformation.  
 In Section \ref{sec-3a} , we introduce our MDI-LR algorithm for efficiently implementing the proposed lattice rule based a multilevel dimension iteration idea. 
  In Section \ref{sec-4}, we present extensive numerical experiments  to test the performance of the proposed MDI-LR algorithm and compare its performance with the original lattice rule and the improved lattice rule with standard implementation. The numerical experiments show that the MDI-LR algorithm is much faster and more efficient in medium and high-dimensional cases. In Section \ref{sec-5}, we numerically examine the impact of parameters appeared in  MDI-LR algorithm, including the choice of the generating vector $\mathbf{z}$ for the lattice rule. In Section \ref{sec-6}, we present a detail numerical study of  the computational complexity for the MDI-LR algorithm. This is done by using regression techniques to discover the relationship between CPU time and dimension  $d$. Finally, the paper is concluded with a summary given in Section \ref{sec-7}.

\section{Preliminaries}\label{sec-2}
In this section, we first briefly recall some basic materials about Quasi-Monte Carlo (QMC) lattice rules for evaluating integrals \eqref{eq1.1} and their properties, they will set stage for us to introduce our fast implementation algorithm in the later sections. 
 
\subsection{Quasi-Monte Carlo lattice rules}\label{sec-2.1}
 Lattice rules are a class of Quasi Monte Carlo (QMC) methods which were first introduced by Korobov in \cite{Korobov} to approximate \eqref{eq1.1} with periodic integrand $f$. A lattice rule is an equal-weight cubature rule whose cubature  points are those points of an integration lattice that lie in the half-open unit cube $[0, 1)^d$. Every lattice point set includes the origin.  The projection of the lattice
 points onto each coordinate axis are equally spaced.  Essentially,  the integral  is approximated in each coordinate direction  by a rectangle rule (or a trapezoidal rule if the integrand is periodic).
The simplest lattice rules are called rank-one lattice rules, they use a lattice point set generated by multiples of a single generator vector, which are defined as follows.
\begin{definition}[rank-one lattice rule]
	An n-point rank-one lattice rule in $d$-dimensions, also known as the method of good lattice points, is a QMC method with cubature points
	\begin{equation}\label{eq2.1}
		\mathbf{x}_i=\Big\{\frac{i\mathbf{z}}{n}\Big\},  \qquad  i=0,1,\cdots,n-1,
	\end{equation}
	where $\mathbf{z}\in \mathbb{Z}^d$, known as the generating vector, is a $d$-dimensional integer
	vector having no factor in common with n, and the braces operator $\{\cdot\}$ takes the fractional part of the input vector.
\end{definition}

Every lattice rule can be written as a multiple
sum involving one or more generating vectors. {\em The minimal number of generating vectors required to generate a lattice rule is known as the rank of the rule.} Besides rank-one lattice rules which have only one generating vector, there are also lattice rules having rank up to $d$.

$f$ is said to have an absolutely convergent Fourier series expansion if 
\begin{equation}\label{eq2.2}
	f(\mathbf{x})=\sum_{\mathbf{h}\in Z^d}\hat{f}(\mathbf{h})e^{2\pi i\mathbf{h}\cdot \mathbf{x}},\qquad  i=\sqrt{-1}, 
\end{equation}
where the Fourier coefficient is defined as 
$$\hat{f}(\mathbf{h})=\int_\Omega f(\mathbf{x})e^{-2\pi i\mathbf{h}\cdot \mathbf{x}} d\mathbf{x}.$$

The following theorem gives two characterizations for the error of the lattice rules (cf. \cite[Theorem 1]{Sloan}
and \cite[Theorem 5.2]{DKS13}).

\begin{theorem}
	Let $Q_{n,d}$ denote a lattice rule
	(not necessarily rank-one) and let $\mathcal{L}$ denote the associated integration lattice. If $f$ has an absolutely convergent Fourier series \eqref{eq2.2}, then
	\begin{equation}
		Q_{n,d}(f)-I_d(f)=\sum_{\mathbf{h}\in \mathcal{L}^{\perp}\setminus\{\mathbf{0}\}}\hat{f}(\mathbf{h}),
	\end{equation}
	where $\mathcal{L}^{\perp}:=\{\mathbf{h}\in\mathbb{Z}^{d}:\mathbf{h}\cdot\mathbf{x}\in\mathbb{Z} \quad\forall \mathbf{x}\in\mathcal{L}\}$ is the dual lattice associated with $\mathcal{L}$.
\end{theorem}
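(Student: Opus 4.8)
The plan is to exploit the special structure of a lattice rule: the cubature points are exactly the points of the integration lattice $\mathcal{L}$ lying in $[0,1)^d$, and there are $n$ of them. The key algebraic fact I would use is a character-sum (orthogonality) identity over the lattice point set. Specifically, for any $\mathbf{h}\in\mathbb{Z}^d$, averaging $e^{2\pi i \mathbf{h}\cdot\mathbf{x}_i}$ over the $n$ lattice points gives $1$ if $\mathbf{h}\in\mathcal{L}^\perp$ and $0$ otherwise. This is the lattice analogue of the familiar geometric-sum identity $\frac1n\sum_{i=0}^{n-1} e^{2\pi i k i/n} = [\,n\mid k\,]$; in the rank-one case with $\mathbf{x}_i=\{i\mathbf{z}/n\}$ it reduces to exactly that sum with $k=\mathbf{h}\cdot\mathbf{z}$, and $\mathbf{h}\in\mathcal{L}^\perp$ precisely when $n\mid \mathbf{h}\cdot\mathbf{z}$. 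For general (higher-rank) lattice rules the same statement holds with a multi-index geometric sum, or can be derived from the group-theoretic description of $\mathcal{L}$.

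First I would substitute the absolutely convergent Fourier expansion \eqref{eq2.2} into the cubature rule $Q_{n,d}(f)=\frac1n\sum_{i=0}^{n-1} f(\mathbf{x}_i)$. Absolute convergence justifies interchanging the finite sum over $i$ with the infinite sum over $\mathbf{h}\in\mathbb{Z}^d$, yielding
\begin{equation*}
Q_{n,d}(f)=\sum_{\mathbf{h}\in\mathbb{Z}^d}\hat f(\mathbf{h})\cdot\frac1n\sum_{i=0}^{n-1}e^{2\pi i\,\mathbf{h}\cdot\mathbf{x}_i}.
\end{equation*}
Then I would apply the character-sum identity of the previous paragraph, which collapses the inner average to the indicator of $\mathbf{h}\in\mathcal{L}^\perp$, so that $Q_{n,d}(f)=\sum_{\mathbf{h}\in\mathcal{L}^\perp}\hat f(\mathbf{h})$. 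Finally, since $\mathbf{0}\in\mathcal{L}^\perp$ always and $\hat f(\mathbf{0})=I_d(f)$, subtracting the exact integral isolates the tail $Q_{n,d}(f)-I_d(f)=\sum_{\mathbf{h}\in\mathcal{L}^\perp\setminus\{\mathbf{0}\}}\hat f(\mathbf{h})$, which is the claimed formula. The second characterization (if the theorem intends one via worst-case error or a series in $1/\mathbf{h}$) would follow by the standard bound $|\hat f(\mathbf{h})|\le$ (Fourier weight) and is essentially a restatement.

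The main obstacle, and the only genuinely non-routine point, is establishing the character-sum identity for a general lattice rule rather than just a rank-one one — i.e., proving that $\frac1n\sum_{\mathbf{x}\in\mathcal{L}\cap[0,1)^d} e^{2\pi i\mathbf{h}\cdot\mathbf{x}}$ equals $\mathbf{1}_{\mathbf{h}\in\mathcal{L}^\perp}$. This requires knowing that $\mathcal{L}\cap[0,1)^d$ is a complete set of coset representatives of $\mathcal{L}/\mathbb{Z}^d$ (equivalently $\mathbb{Z}^d/\mathcal{L}^\perp$ is finite of order $n$), so that the sum is a sum of a nontrivial character over a finite abelian group when $\mathbf{h}\notin\mathcal{L}^\perp$ and hence vanishes. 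Since the excerpt only develops rank-one rules in detail, I would either prove it cleanly in the rank-one case (pure geometric sum) and remark that the general case follows from the structure theorem for integration lattices, or cite \cite{Sloan,DKS13} for that structural fact. Everything else — the interchange of summation, the identification $\hat f(\mathbf{0})=I_d(f)$ — is immediate.
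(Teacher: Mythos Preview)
Your proposal is correct and is the standard argument: substitute the Fourier series, interchange summation (justified by absolute convergence), use the character-sum identity $\frac{1}{n}\sum_{\mathbf{x}\in\mathcal{L}\cap[0,1)^d} e^{2\pi i\mathbf{h}\cdot\mathbf{x}}=\mathbf{1}_{\mathbf{h}\in\mathcal{L}^\perp}$, and peel off the $\mathbf{h}=\mathbf{0}$ term. The paper itself does not give a proof of this theorem at all; it is stated as background and attributed to \cite[Theorem~1]{Sloan} and \cite[Theorem~5.2]{DKS13}, whose proofs proceed exactly along the lines you outline, including the group-theoretic justification of the character-sum identity for general (not just rank-one) lattices that you flag as the only non-routine step.
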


\begin{theorem}
	Let $Q_{n,d}$ denote a rank-one lattice rule with a generating vector $\mathbf{z}$.  If $f$ has an absolutely convergent Fourier series \eqref{eq2.2}, then
	\begin{equation}\label{eq2.3}
		Q_{n,d}(f)-I_d(f)=\sum_{\begin{array}{c}
				\mathbf{h}\in \mathbb{Z}^{d}\setminus\{\mathbf{0}\}\\
				\mathbf{h}\cdot\mathbf{z}\equiv 0\,  (\mbox{\rm mod}\,  n)  
		\end{array}} \hat{f}(\mathbf{h}).
	\end{equation}
\end{theorem}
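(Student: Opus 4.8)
The plan is to give a direct computation, starting from the definition \eqref{eq2.1}--\eqref{eq1.2} of the rank-one rule and substituting the Fourier expansion \eqref{eq2.2} of $f$; the previous theorem could alternatively be invoked, but for a rank-one rule the self-contained argument is short and also identifies the dual lattice $\mathcal{L}^\perp$ explicitly as $\{\mathbf{h}\in\mathbb{Z}^d:\mathbf{h}\cdot\mathbf{z}\equiv 0\ (\mathrm{mod}\ n)\}$. First I would write
\begin{equation*}
  Q_{n,d}(f)=\frac1n\sum_{i=0}^{n-1} f\Big(\Big\{\tfrac{i\mathbf{z}}{n}\Big\}\Big)
  =\frac1n\sum_{i=0}^{n-1}\sum_{\mathbf{h}\in\mathbb{Z}^d}\hat f(\mathbf{h})\, e^{2\pi i\,\mathbf{h}\cdot\{i\mathbf{z}/n\}},
\end{equation*}
and then observe that, because $e^{2\pi i\,\mathbf{h}\cdot\mathbf{x}}$ is $\mathbb{Z}^d$-periodic in $\mathbf{x}$ and $\mathbf{h}\in\mathbb{Z}^d$, one may drop the fractional-part braces: $e^{2\pi i\,\mathbf{h}\cdot\{i\mathbf{z}/n\}}=e^{2\pi i\,(\mathbf{h}\cdot\mathbf{z})\,i/n}$.

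Next I would interchange the two summations. Since $f$ has an \emph{absolutely} convergent Fourier series, $\sum_{\mathbf{h}}|\hat f(\mathbf{h})|<\infty$, and the inner sum over $i$ is finite, so Fubini's theorem for sums applies and the reordering is legitimate. After interchanging, the $i$-sum becomes the geometric/character sum
\begin{equation*}
  \frac1n\sum_{i=0}^{n-1} e^{2\pi i\,(\mathbf{h}\cdot\mathbf{z})\,i/n}
  =\begin{cases}1,& \mathbf{h}\cdot\mathbf{z}\equiv 0\ (\mathrm{mod}\ n),\\[2pt] 0,&\text{otherwise},\end{cases}
\end{equation*}
which I would verify by the standard roots-of-unity identity (the ratio $e^{2\pi i(\mathbf{h}\cdot\mathbf{z})/n}$ equals $1$ exactly when $n\mid \mathbf{h}\cdot\mathbf{z}$, and otherwise the finite geometric series telescopes to $0$). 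Consequently $Q_{n,d}(f)=\sum_{\mathbf{h}\cdot\mathbf{z}\equiv 0\,(\mathrm{mod}\,n)}\hat f(\mathbf{h})$, the sum ranging over all such $\mathbf{h}\in\mathbb{Z}^d$ including $\mathbf{0}$.

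Finally I would note that $I_d(f)=\hat f(\mathbf{0})$ directly from the definition of the Fourier coefficient with $\mathbf{h}=\mathbf{0}$ (again using absolute convergence to integrate the series term by term, or simply reading it off \eqref{eq2.2}), and that $\mathbf{h}=\mathbf{0}$ trivially satisfies $\mathbf{h}\cdot\mathbf{z}\equiv 0\ (\mathrm{mod}\ n)$. Subtracting $I_d(f)$ from $Q_{n,d}(f)$ removes precisely the $\mathbf{h}=\mathbf{0}$ term, yielding \eqref{eq2.3}. The only genuinely delicate point is the justification of the term-by-term manipulations (interchange of summation and of integration with summation), which is exactly where the hypothesis of an absolutely convergent Fourier series is used; everything else is the elementary character-sum identity.
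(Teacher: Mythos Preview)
Your argument is correct and is the standard proof of this classical result: substitute the Fourier expansion into $Q_{n,d}(f)$, drop the fractional part by $\mathbb{Z}^d$-periodicity, interchange sums using absolute convergence, evaluate the resulting geometric character sum, and subtract $I_d(f)=\hat f(\mathbf{0})$. There is nothing to compare it against, however, because the paper does not supply its own proof of this theorem; it is stated in the preliminaries with a citation to \cite{Sloan} and \cite[Theorem~5.2]{DKS13}, and the proof found in those references is precisely the one you have written.
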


It follows from \eqref{eq2.3} that the least upper bound of the error for the class {\color{black} $E_{\alpha}(c)$ of functions whose Fourier coefficients satisfy $|\hat{f}(\mathbf{h})|\leq\frac{c}{(\bar{h}_1\cdots\bar{h}_d)^{\alpha}}$ } (where $\bar{h}:=\max(1,|h|)$ and $\alpha>1,c>0,\mathbf{h}\neq 0$)  is given by 
\begin{equation}
	|Q_{n,d}(f)-I_d(f)|\leq c\sum_{\begin{array}{c}
			\mathbf{h}\in \mathbb{Z}^{d}\setminus\{\mathbf{0}\}\\
			\mathbf{h}\cdot\mathbf{z}\equiv 0\,  (\mbox{mod}\,  n)
	\end{array}}\frac{1}{(\bar{h}_1\cdots\bar{h}_d)^{\alpha}}.
\end{equation}

Let
\begin{equation}
	{\color{black} P_{\alpha,n,d}(\mathbf{z}):}=\sum_{\begin{array}{c}
			\mathbf{h}\in \mathbb{Z}^{d}\setminus\{\mathbf{0}\}\\
			\mathbf{h}\cdot\mathbf{z}\equiv 0 \,  (\mbox{mod}\,  n) 
	\end{array}}\frac{1}{(\bar{h}_1\cdots\bar{h}_d)^{\alpha}}. 
\end{equation}
For fixed $n$ and $\alpha$,   a good lattice point $\mathbf{z}$ is so chosen to make $P_{\alpha,n,d}(\mathbf{z})$ as small as possible. 
It ws proved by Niederreiter in \cite[Theorem 2.11]{Niederreiter_H,Niederreiter_H2} that for a prime $n$ (or a prime power) there exists a lattice point $\mathbf{z}$ such that
\begin{equation}
	P_{\alpha,n,d}(\mathbf{z})=O\biggl(\frac{(\log n)^{\alpha d}}{n^{\alpha}} \biggr).
\end{equation}
This was done by proving that 	{\color{black}$P_{\alpha,n,d}(\mathbf{z})$ }has the following expansion: 
\begin{align}\label{eq2.4}
		&P_{\alpha,n,d}(\mathbf{z})=-1+\frac{1}{n}\sum\limits_{k=0}^{n-1}\prod_{j=1}^{d} \biggl(1+\sum_{h\in\backslash\{0\}}\frac{e^{2\pi i khz_j/n}}{|h|^{\alpha}} \biggr)\\
		&\quad =-1+\frac{1}{n}\prod_{j=1}^{d} \bigl(1+2\zeta(\alpha)  \bigr)+\frac{1}{n}\sum_{k=1}^{n-1}\prod_{j=1}^{d} \biggl(1+\frac{(-1)^{\frac{\alpha}{2}+1}(2\pi)^{\alpha}}{\alpha!}B_{\alpha}\Bigl(\Bigl\{\frac{kz_j}{n} \Bigr\} \Bigr) \biggr), \nonumber
\end{align}
where
\begin{align}
	\zeta(\alpha) &:=\sum_{j=1}^{\infty}\frac{1}{j^\alpha}, \qquad \alpha >1;\\
	B_{\alpha}(\lambda)&:=\frac{(-1)^{\frac{\alpha}{2}+1}\alpha!}{(2\pi)^{\alpha}}\sum_{h\in\mathbb{Z}\backslash\{0\}}\frac{e^{2\pi i h\lambda}}{|h|^{\alpha}}, \qquad \lambda\in[0,1].
\end{align}

As expected, the performance of a lattice rule depends heavily on the choice of the generating vector $\mathbf{z}$. For large $n$ and $d$, 
an exhaustive search to find such a generating vector by minimizing some desired error criterion is practically impossible. Below we list  a few common 
strategies for constructing lattice generating vectors.

{\color{black}
	  We end this subsection by stating some well-known error estimate results. To the end,  we need to introduce some notations. The worst-case error of a QMC rule 	$Q_{n,d}(f)$ using the point set $P \subset [0,1]^d$ in a normed space $H$ (with the norm $\|\cdot\|$) is  
	\[ 
	E_{n,d}(P):= \sup_{\|f\|\leq 1} |I_d(f)-Q_{n,d}(f)|.  
	\]
By linearity, for any function $f\in H$, we have
\[ 
\bigl|I_d(f)-Q_{n,d}(f) \bigr| \leq E_{n,d}(P) \|f\|.  
\]

For a given (shift) vector $\Delta\in[0, 1]^d$, we define the shifted lattice  $P + \Delta := \{\{\mathbf{t} + \Delta\} : t\in P\}$. 
For any  QMC lattice rule $Q_{n,d}(\cdot)$ with  the lattice point set $P$,   let $Q^{(sh)}_{n,d}(\cdot)$ denote the corresponding shifted QMC lattice rule over the lattice  $P + \Delta$.   Then, for any integrand $f\in H$, it follows from the definition of the worst-case error that 
	\[
	\bigl|I_d(f)-Q_{n,d}^{(sh)}(f) \bigr| \leq E_{n,d}(P+\Delta) \|f\|.  
	\]	
{\color{black} 
	Define the quantity $$E^{(sh)}_{n,d}(P):=  \biggl(\int_{[0,1]^d}E^2_{n,d}(P+\Delta)\,d\Delta\biggr)^{\frac12},$$ which denotes the shift-averaged worst-case error.  The following bound for the root-mean-square error was derived in \cite[Section 5.2]{DKS13}:
\[ 
\biggl(\mathbb{E}\bigl|I_d(f)-Q_{n,d}^{(sh)}(f) \bigr|^2 \biggr)^{\frac12} \leq E^{(sh)}_{n,d}(P)\|f\|.    
\]
where the expectation $\mathbb{E}$ is taken over the random shift $\Delta$ which is uniformly
distributed over $[0, 1]^d$.
The shift-averaged worst-case error $E^{(sh)}_{n,d}(P)$ is often used as quality measure for randomly shifted QMC rules. For any given point set $P$, the averaging argument guarantees the existence of at least one shift $\Delta$ for which 
	\begin{equation}
		 E_{n,d}(P+\Delta)\leq E_{n,d}^{(sh)}(P).
	\end{equation}
}
	
In the case of rank-one  QMC lattice rule with the generating vector $\mathbf{z}$, we use $E_{n,d}(\mathbf{z})$ and $E^{(sh)}_{n,d}(\mathbf{z})$ to denote $E_{n,d}(P)$ and $E^{(sh)}_{n,d}(P)$.  
It was proved in \cite[Lemma 5.5]{DKS13} that, for any rank-one QMC lattice rule,  $\bigl[E^{(sh)}_{n,d}(\mathbf{z}) \bigr]^2$ has an explicit formula as quoted in the following theorem. 
	
	\begin{theorem}
		The shift-averaged worst-case error for a rank-one QMC attice rule in the weighted anchored or unanchored Sobolev space (see Remark \ref{rem-1.2} below for the definitions) is given by
		\begin{align}\label{eq2.12}
			\bigl[E^{(sh)}_{n,d}(\mathbf{z}) \bigr]^2&=\sum_{\emptyset\neq\nu\subseteq\{1:d\}}\gamma_{\nu}\bigg(\frac{1}{n}\sum_{k=0}^{n-1}\prod_{j\in\nu}\bigg[B_{2}\bigg( \bigg\{\frac{kz_j}{n}\bigg\} \bigg)+\beta\bigg]-\beta^{|\nu|} \bigg)\\
			&=-\prod_{j=1}^{d}(1+\gamma_j\beta)+\frac{1}{n}\sum_{k=0}^{n-1}\prod_{j=1}^{d}\bigg(1+\gamma_j\bigg[B_2\bigg(\bigg\{
			\frac{kz_j}{n}\bigg\}\bigg)+\beta\bigg]\bigg), \nonumber
		\end{align}
where $\beta = c^2-c +\frac13$  for the anchored Sobolev space and  $\beta = 0$ for unanchored Sobolev space. $\{\gamma_{\nu}\}$ are weights.  
	\end{theorem}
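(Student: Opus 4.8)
The plan is to derive \eqref{eq2.12} from the reproducing-kernel-Hilbert-space (RKHS) structure of the weighted Sobolev space together with the classical closed form of the worst-case quadrature error in an RKHS, and then to carry out the average over the random shift; this is essentially the route of \cite[Lemma~5.5]{DKS13}. First I would recall that the weighted anchored (resp.\ unanchored) Sobolev space $H$ of Remark~\ref{rem-1.2} is an RKHS whose kernel has the product--sum form
\[
K_d(\mathbf{x},\mathbf{y})=\sum_{\nu\subseteq\{1:d\}}\gamma_\nu\prod_{j\in\nu}\eta(x_j,y_j),
\]
where $\eta$ is the one--dimensional reproducing kernel of the anchored (resp.\ unanchored) space (for product weights this is $\prod_{j=1}^{d}(1+\gamma_j\eta(x_j,y_j))$). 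I would then invoke the standard identity for the worst-case error of an equal-weight cubature with node set $P=\{\mathbf{x}_0,\dots,\mathbf{x}_{n-1}\}$ in such a space,
\[
E_{n,d}(P)^2=\int_{[0,1]^{2d}}\!K_d(\mathbf{x},\mathbf{y})\,d\mathbf{x}\,d\mathbf{y}-\frac{2}{n}\sum_{k=0}^{n-1}\int_{[0,1]^d}\!K_d(\mathbf{x}_k,\mathbf{y})\,d\mathbf{y}+\frac{1}{n^2}\sum_{k,\ell=0}^{n-1}K_d(\mathbf{x}_k,\mathbf{x}_\ell),
\]
so that the whole computation reduces to integrals and sums of $K_d$.

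The next step would be to apply this to the shifted node set $P+\Delta$ and integrate over $\Delta\in[0,1]^d$. By Fubini's theorem and the measure-preserving change of variables $\mathbf{y}\mapsto\{\mathbf{y}+\Delta\}$ on the torus, every occurrence of $K_d$ in the three terms above is replaced by the shift-averaged kernel
\[
K_d^{\mathrm{sh}}(\mathbf{x},\mathbf{y}):=\int_{[0,1]^d}K_d(\{\mathbf{x}+\Delta\},\{\mathbf{y}+\Delta\})\,d\Delta=\sum_{\nu\subseteq\{1:d\}}\gamma_\nu\prod_{j\in\nu}\eta^{\mathrm{sh}}(x_j,y_j),
\]
where the factorization uses that the integrand is a product over the coordinates $j$, and $\eta^{\mathrm{sh}}(x,y):=\int_0^1\eta(\{x+\Delta\},\{y+\Delta\})\,d\Delta$ depends on $(x,y)$ only through $\{x-y\}$. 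Hence $\bigl[E^{(sh)}_{n,d}(\mathbf{z})\bigr]^2$ equals the same three-term expression with $K_d$ replaced by $K_d^{\mathrm{sh}}$, and everything comes down to a one--dimensional computation of $\eta^{\mathrm{sh}}$.

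Finally, using the Fourier series of $\eta$ and the Fourier representation of the Bernoulli polynomial $B_2$ recalled above (equivalently, a Bernoulli convolution identity), I would show $\eta^{\mathrm{sh}}(x,y)=B_2(\{x-y\})+\beta$, with $\beta=0$ in the unanchored case and $\beta=c^2-c+\tfrac13$ in the anchored case --- this being the only place where the two spaces differ. Since $\int_0^1\bigl(B_2(\{t\})+\beta\bigr)\,dt=\beta$, the first two terms collapse to $\sum_{\nu}\gamma_\nu\beta^{|\nu|}-2\sum_{\nu}\gamma_\nu\beta^{|\nu|}=-\sum_{\nu}\gamma_\nu\beta^{|\nu|}$; and for the rank-one rule $\mathbf{x}_k=\{k\mathbf{z}/n\}$ one has $\{(x_k)_j-(x_\ell)_j\}=\{(k-\ell)z_j/n\}$, so the third term depends on $k-\ell\bmod n$ only and the cyclic-group structure of $\mathbb{Z}_n$ gives $\tfrac1{n^2}\sum_{k,\ell=0}^{n-1}=\tfrac1n\sum_{k=0}^{n-1}$. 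Collecting the pieces yields
\[
\bigl[E^{(sh)}_{n,d}(\mathbf{z})\bigr]^2=-\sum_{\nu\subseteq\{1:d\}}\gamma_\nu\beta^{|\nu|}+\frac1n\sum_{k=0}^{n-1}\sum_{\nu\subseteq\{1:d\}}\gamma_\nu\prod_{j\in\nu}\Bigl(B_2\bigl(\{kz_j/n\}\bigr)+\beta\Bigr),
\]
which, after moving $\beta^{|\nu|}$ inside the $k$-sum and discarding the vanishing $\nu=\emptyset$ contribution $\gamma_\emptyset-\gamma_\emptyset=0$, is exactly the first equality of \eqref{eq2.12}; specializing to product weights $\gamma_\nu=\prod_{j\in\nu}\gamma_j$ and using $\sum_{\nu\subseteq\{1:d\}}\prod_{j\in\nu}(\gamma_j t_j)=\prod_{j=1}^d(1+\gamma_j t_j)$ converts it into the second (product) form. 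I expect the main obstacle to be the one--dimensional identity $\eta^{\mathrm{sh}}(x,y)=B_2(\{x-y\})+\beta$: it requires the explicit anchored/unanchored one--dimensional kernels and a careful periodization/Fourier computation to produce \emph{exactly} $B_2$ and the constant $\beta$, whereas the remaining steps are routine bookkeeping with Fubini, the torus change of variables, and the collapse of the double sum.
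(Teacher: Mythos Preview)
Your sketch is the standard RKHS derivation and is correct in outline; however, note that the paper itself does not give a proof of this theorem at all---it simply quotes the result from \cite[Lemma~5.5]{DKS13} as background material. Your proposed route (worst-case error formula in an RKHS, averaging the kernel over the shift, the one-dimensional identity $\eta^{\mathrm{sh}}(x,y)=B_2(\{x-y\})+\beta$, and the cyclic-group collapse of the double sum) is precisely the argument in that reference, so there is nothing to compare.
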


\begin{remark}\label{rem-1.2}
	(1) We recall that for general given weights $\{\gamma_{\nu}\}$,  the inner product  of the weighted anchored Sobolev space is defined by 
	\begin{equation}
		<f,g>_{d,\gamma}:=\sum_{\nu\subseteq\{1, 2,\cdots, d\}}\gamma_{\nu}^{-1}\int_{[0,1]^{|\nu|}}\frac{\partial^{|\nu|}}{\partial \mathbf{x}_{\nu}}f(\mathbf{x}_{\nu};  c )\frac{\partial^{|\nu|}}{\partial \mathbf{x}_{\nu}}g(\mathbf{x}_{\nu};  c) d\mathbf{x}_{\nu}.
	\end{equation}
{\color{black} where the sum is over
	all subsets $\nu\subseteq\{1,2,\cdots,d\}$, including the empty set, while for $\mathbf{x} \in[0, 1]^d$ the
	symbol $\mathbf{x}_{\nu}$ denotes the set of components $x_j$ of $\mathbf{x}$ with $j\in \nu$, and $(\mathbf{x}_{\nu}; c)$
	denotes the vector obtained by replacing the components of $\mathbf{x}$ for $j \notin \nu$ by $c\in [0,1]$ which is called the `anchor' value. The partial derivative $\frac{\partial^{|\nu|}}{\partial \mathbf{x}_{\nu}}$ denotes the mixed first partial derivative with respect
	to the components of $\mathbf{x}_{\nu}$.} 

 (2) The inner product for the weighted unanchored  Sobolev space is defined by 
 \begin{align}
 	<f,g>_{d,\gamma} &:=\sum_{\nu\subseteq\{1:d\}}\gamma_{\nu}^{-1}\int_{[0,1]^{|\nu|}}\bigg(\int_{[0,1]^{d-|\nu|}}\frac{\partial^{|\nu|}}{\partial \mathbf{x}_{\nu}}f(\mathbf{x})d\mathbf{x}_{-\nu}\bigg) \\
  &	\hskip 1.5in 
 	\times\bigg(\int_{[0,1]^{d-|\nu|}}\frac{\partial^{|\nu|}}{\partial \mathbf{x}_{\nu}}g(\mathbf{x})d\mathbf{x}_{-\nu}\bigg)d\mathbf{x}_{\nu}, \nonumber
 \end{align}
where $\mathbf{x}_{-\nu}$ stands for the vector consisting of the remaining components of the $d$-dimensional vector $\mathbf{x}$ that
are not in $\mathbf{x}_{\nu}$.
\end{remark}

\subsection{Examples of good rank-one lattice rules}

The first example is the Fibonacci lattice, we refer the reader to \cite{DKS13} for the details.

\begin{example}[Fibonacci lattice]\label{ex1}
	Let $\mathbf{z} = (1, F_k)$ and $n = F_{k+1}$, where
	$F_k$ and $F_{k+1}$ are consecutive Fibonacci numbers. Then the resulting two-dimensional lattice set generated by  $\mathbf{z}$ is called a Fibonacci lattice.  
\end{example}

Fibonacci lattices in 2-d  have a certain optimality property, but there is no obvious generalization to higher dimensions that
retains the optimality property (cf. \cite{DKS13} ).

The second example is so-called Korobov lattices, we refer the reader to \cite{NM,Niederreiter_1} for the details.

\begin{example}[Korobov lattice]\label{ex2}
	Let $a$ be an integer satisfying $1 \leq a \leq n-1$ and $\gcd(a, n) = 1$ and 
	\begin{equation*}
		\mathbf{z}=\mathbf{z}(a):=(1,a,a^2,\cdots,a^{d-1})\mod n.
	\end{equation*}
	Then the resulting $d$-dimensional lattice set generated by  $\mathbf{z}$ is called a  Korobov  lattice.  
\end{example}

It is easy to see that  there are (at most) $n-1$ choices for the Korobov parameter $a$, which leads to (at most) $n-1$ choices for the generating vector $\mathbf{z}$. Thus it is feasible in practice to search through the (at most) $n-1$ choices and take the one that fulfills the desired error criterion 
such as the one that minimizes $P_{\alpha,n,d}(\mathbf{z})$, and  \eqref{eq2.4} allows $P_{\alpha,n,d}(\mathbf{z})$ to be computed in $O(dn^2)$ operations (cf.  \cite{XW}). 
 
 	 The last example is called the CBC lattice which is based on the component-by-component construction (cf. \cite{Sloan_A}).   
 
\begin{example}[CBC lattice]\label{ex3b}
	Let $\mathbb{N}_n:=\{z\in \mathbb{Z}:1\leq z\leq n-1 \text{ and } \gcd(z,n)=1 \}$. Given $n, d$ and weights as in $\gamma_{\nu}$ in \eqref{eq2.12},  define generating vector $\mathbf{z} = (z_1, z_2,\cdots, z_d)$ component-wise  as follows.
\begin{itemize}
	\item[\rm (i)]  Set $z_1 = 1$.
	
	\item[\rm (ii)] With $z_1$ held fixed, choose $z_2$ from $\mathbb{N}_n$ to minimize $[E^{(sh)}_{n,d}((z_1,z_2))]^2$  in $2$-d.
	
	\item[\rm (iii)] With $z_1, z_2$ held fixed, choose $z_3$ from $\mathbb{N}_n$ to minimize $[E^{(sh)}_{n,d}((z_1,z_2,z_3))]^2$  in $3$-d.
	
	
\item[\rm (iv)] repeat the above process until all $\{z_j\}_{j=1}^d$ are determined. 
\end{itemize}
\end{example}

With general weights $\{\gamma_{\nu}\}$, the cost of the CBC algorithm is prohibitively expensive, thus in practice some special structure is always adopted, among them, product weights, order-dependent weights, finite-order weights, and POD (product and order-dependent) weights are commonly used.  In each of the $d$ steps of the CBC algorithm,  the search space $\mathbb{N}_n$ has cardinality $n-1$. Then the overall search space for the CBC algorithm is reduced to a size of order $O(dn)$ (cf. \cite[page 11]{PHF}).  Hence, this provides a feasible way of constructing a generating vector $\mathbf{z}$.

Figure \ref{fig.1} shows a two-dimensional lattice with 81 points, the corresponding generating vectors are (1, 2), (1, 4) and (1, 7) respectively. Figure \ref{fig.2} shows a three-dimensional lattice with 81 points, and the corresponding generating vectors are respectively (1, 2, 4), (1, 4, 16) and (1, 7, 49).

\begin{figure}[H]
	\centerline{
		\includegraphics[width=1.6in,height=1.5in]{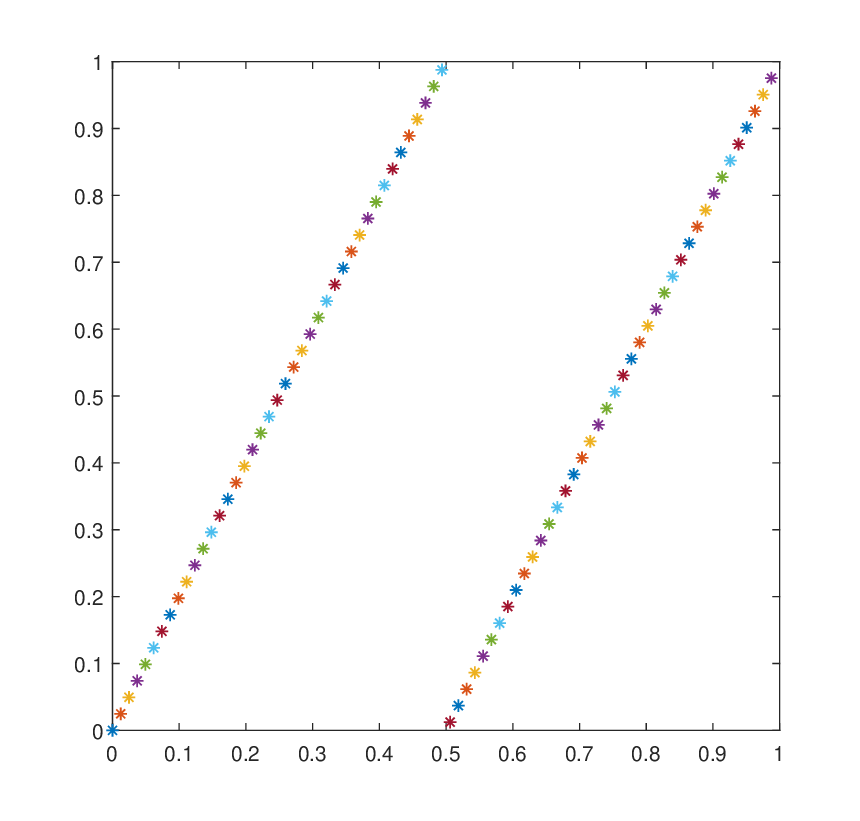}
		\includegraphics[width=1.6in,height=1.5in]{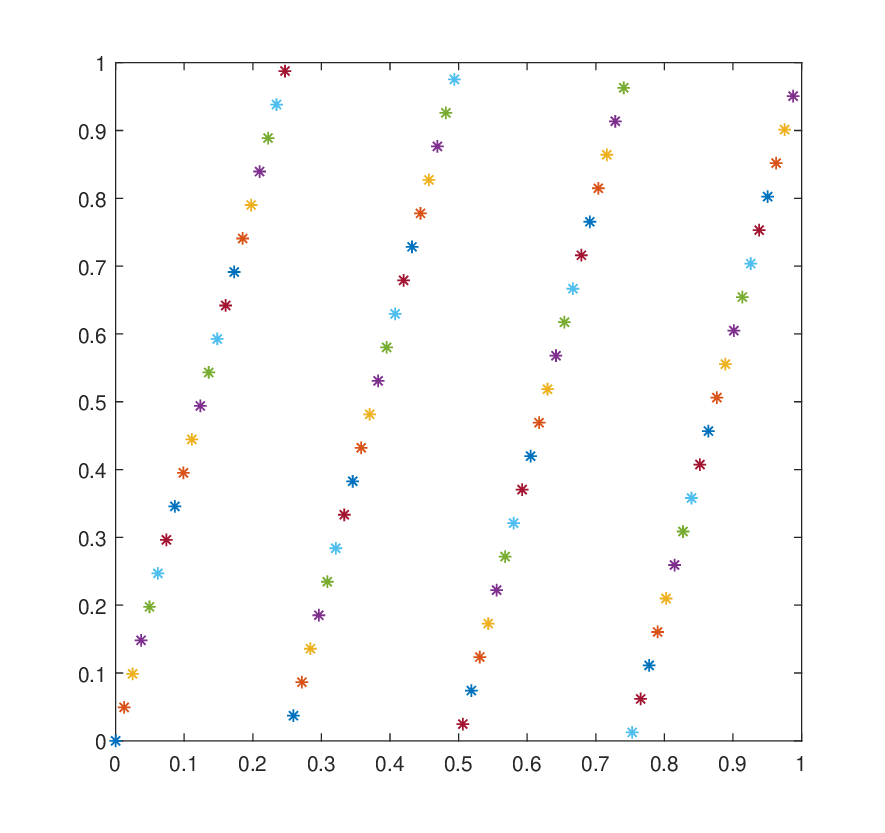}
		\includegraphics[width=1.6in,height=1.5in]{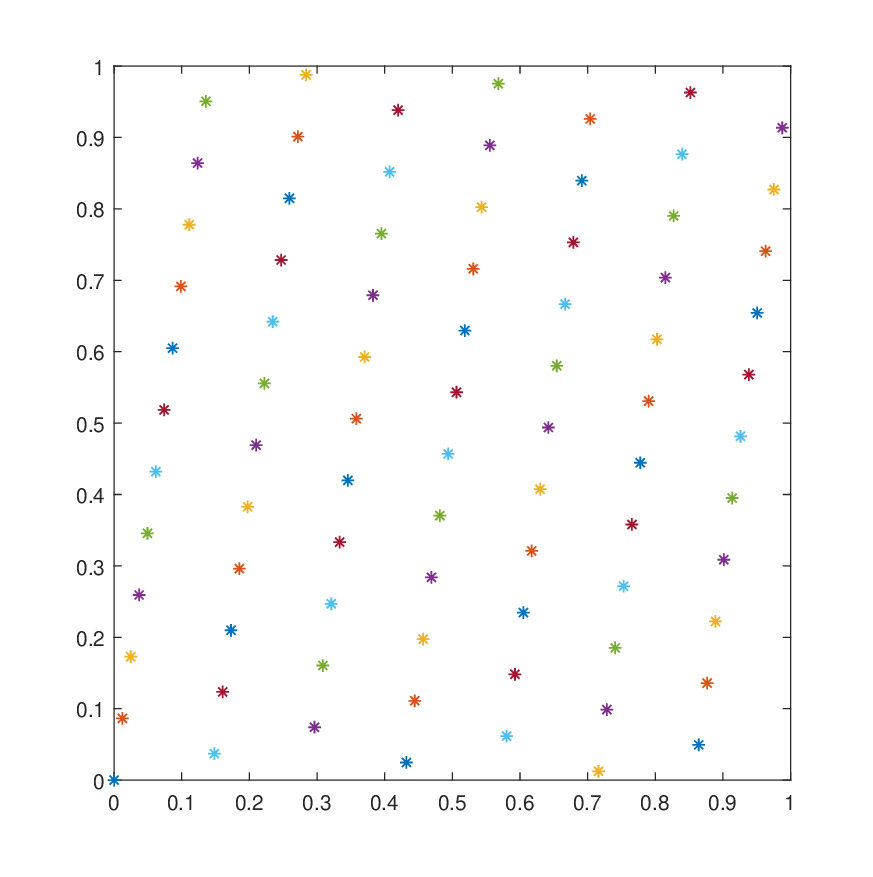}
	}
	\caption{$81$-point lattice with generating vectors $(1, 2), (1, 4)$, and $(1, 7)$.}	\label{fig.1}   
\end{figure}

\begin{figure}[H]
	\centerline{
		\includegraphics[width=1.7in,height=1.6in]{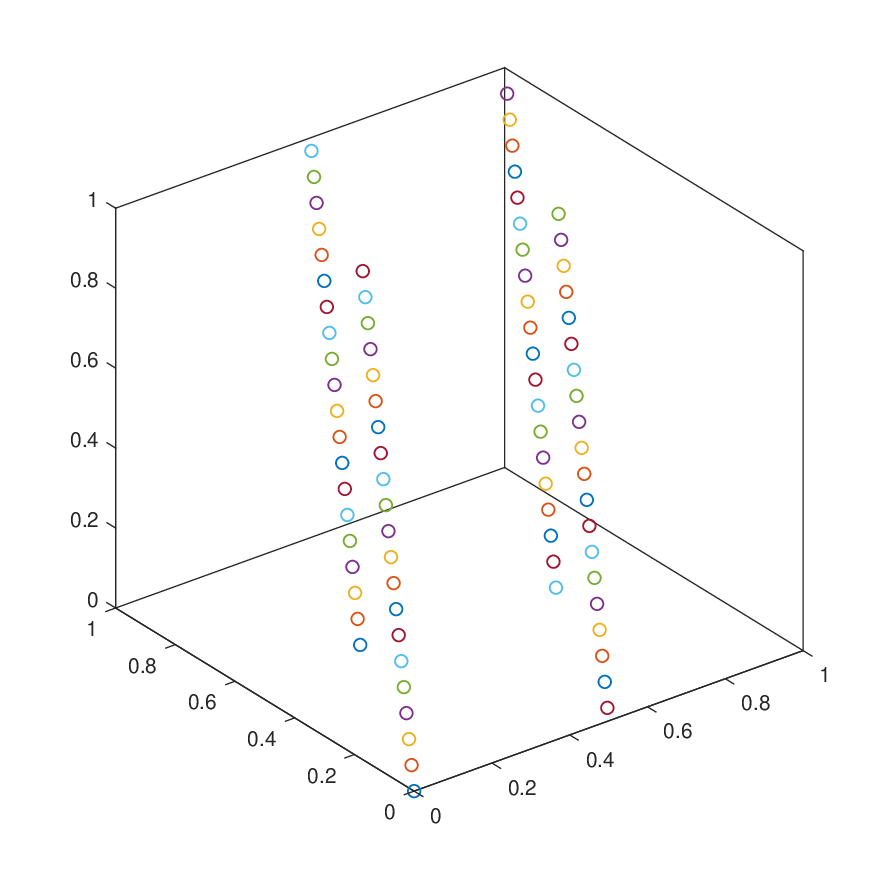}
		\includegraphics[width=1.7in,height=1.6in]{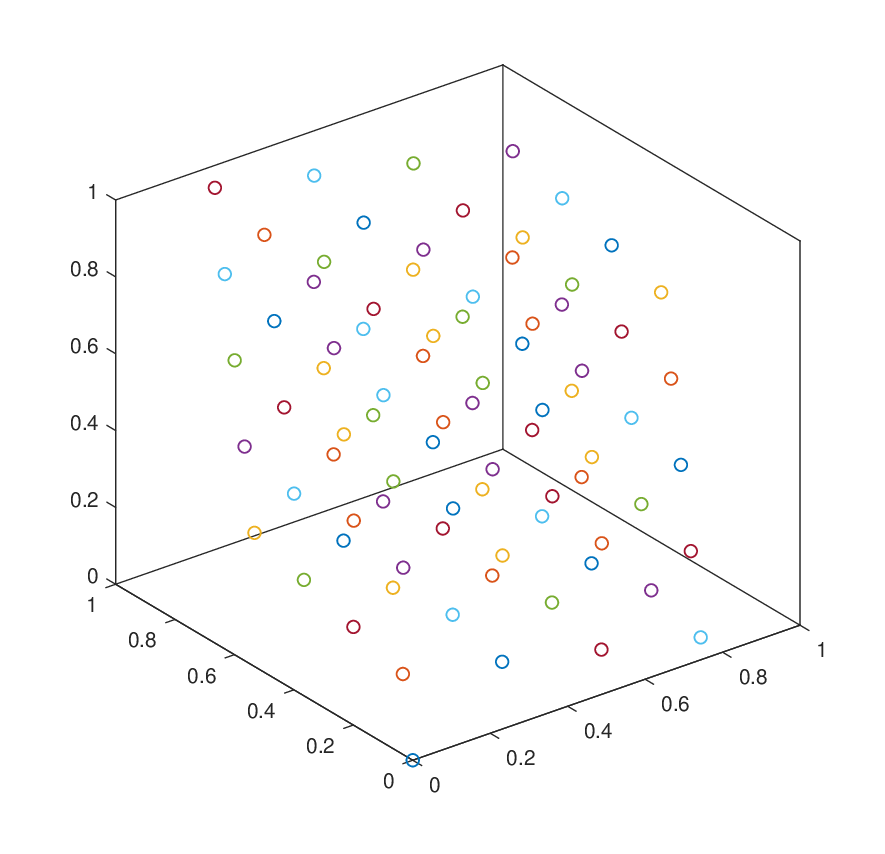}
		\includegraphics[width=1.7in,height=1.6in]{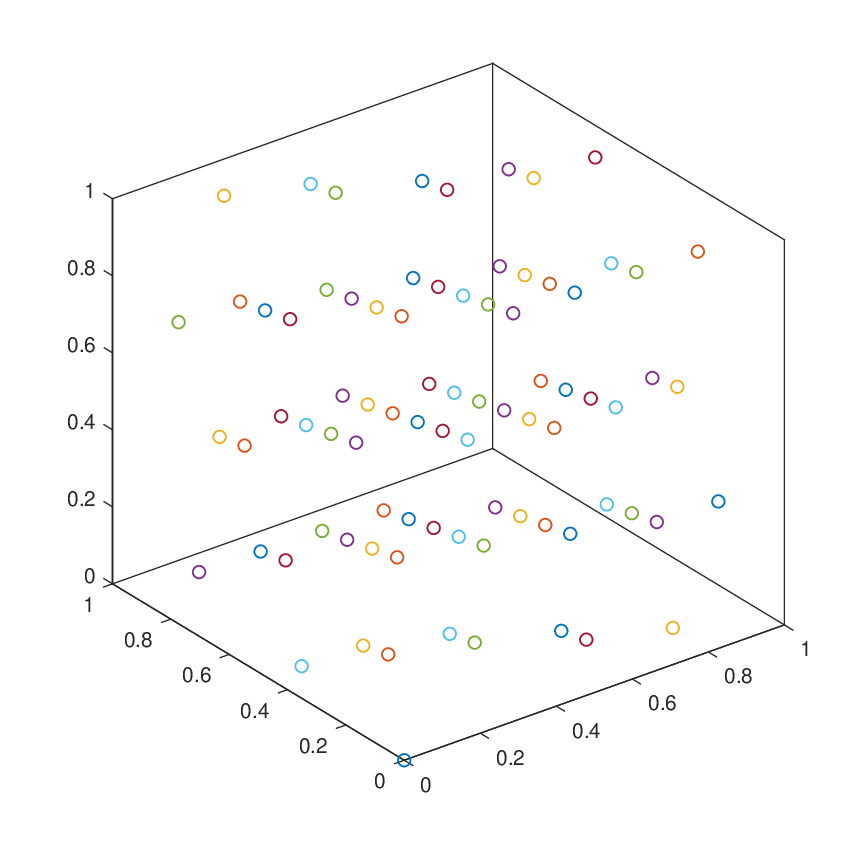}
	}
	\caption{$81$-point lattice with generating vectors $(1, 2, 4), (1, 4, 16)$, and $(1, 7, 49)$.}	\label{fig.2}   
\end{figure}

\section{Reformulation of lattice rules}\label{sec-3a}
Clearly, the lattice point set of each  QMC lattice rule has some pattern or structure.  Indeed,  one main goal of this section is precisely to describe the pattern. We show that a lattice rule almost has a tensor product reformulation viewed in an appropriately  transformed coordinate space via an affine transformation.  This discovery allows us to introduce a tensor product rule as an improvement to the original QMC lattice rule. More importantly,  the reformulation lays an important jump pad for us to develop 
an efficient and fast implementation algorithm (or solver), called the MDI-LR algorithm, based on the idea of multilevel dimension iteration  \cite{Feng-Zhong}, for evaluating the QMC lattice rule \eqref{eq1.2}. 

\subsection{Construction of affine coordinate transformations}
From Figure \ref{fig.1}-\ref{fig.2}, we see that the distribution of lattice points are on lines/planes which are not parallel to the coordinate axes/planes,
however, those lines/planes are parallel to each other, this observation suggests that they can be made to parallel to the coordinate axes/planes via affine 
transformations.  Below we prove that is indeed the case and explicitly construct such an affine transformation for a given QMC lattice rule.

\begin{theorem}\label{the1}
	Let $\mathbf{z}\in \mathbb{Z}^d$ and $\mathbf{x}_j=\bigl\{\frac{j\mathbf{z}}{n}\big\}$ for $j=0,1,2,\cdots, n-1$ denote the rank-one QMC lattice rule  point set.
	Define
	\begin{equation}\label{eq3.1}
		A=\left(\begin{array}{ccccccc}
			\frac{1}{z_1}&-\frac{1}{z_2}&0&\cdots&0&0\\
			0&\frac{1}{z_2}&-\frac{1}{z_3}&\cdots&0&0\\
			\vdots&\vdots&\vdots&\vdots&\vdots&\vdots\\
			0&0&0&\cdots&\frac{1}{z_{d-1}}&-\frac{1}{z_d}\\
			0&0&0&\cdots&0&1\end{array}\right)  \quad \mbox{and } \quad 
		\mathbf{b}=\left(\begin{array}{c}
			0\\0\\\vdots\\-\bigl\{\frac{nx_d}{z_d} \bigr\}\cdot\frac{z_d}{n}	
		\end{array}\right).
	\end{equation} 
	Notice that $A\in \mathbb{R}^{d\times d}$ and $\mathbf{b}\in \mathbb{R}^d$.  
	Then $\mathbf{y}_j:=abs(A\mathbf{x}_j+\mathbf {b}), j=0,1,\cdots,n-1$ form a Cartesian grid in the new coordinate system, where $abs(\mathbf{y})$ defines as taking  the absolute value of each component in the vector $\mathbf{y}$. 
\end{theorem}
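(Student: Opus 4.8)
\medskip
\noindent\textbf{Proof plan.}
The plan is to evaluate $\mathbf{y}_j=\mathrm{abs}(A\mathbf{x}_j+\mathbf{b})$ one coordinate at a time and to show that each coordinate is forced to take values in a fixed, equally spaced one–dimensional point set; consequently all the $\mathbf{y}_j$ lie on the Cartesian product of these sets, which is the asserted grid. The only ingredient is the elementary identity $\{t\}=t-\lfloor t\rfloor$ together with the bidiagonal shape of $A$.

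First I would record, for every index $k$ and every $j$,
\[
\frac{1}{z_k}\Big\{\frac{jz_k}{n}\Big\}
=\frac{1}{z_k}\Big(\frac{jz_k}{n}-\Big\lfloor\frac{jz_k}{n}\Big\rfloor\Big)
=\frac{j}{n}-\frac{q_k}{z_k},\qquad q_k:=\Big\lfloor\frac{jz_k}{n}\Big\rfloor\in\mathbb{Z}.
\]
Plugging this into rows $1,\dots,d-1$ of $A$, the common summand $j/n$ cancels and
\[
(A\mathbf{x}_j)_k=\frac{1}{z_k}\Big\{\frac{jz_k}{n}\Big\}-\frac{1}{z_{k+1}}\Big\{\frac{jz_{k+1}}{n}\Big\}
=\frac{q_{k+1}}{z_{k+1}}-\frac{q_k}{z_k}
=\frac{q_{k+1}z_k-q_kz_{k+1}}{z_kz_{k+1}},
\]
an integer multiple of $1/(z_kz_{k+1})$. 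Because $0\le jz_k/n<z_k$ for $0\le j\le n-1$, one has $0\le q_k\le z_k-1$, hence $q_k/z_k\in[0,1)$ (and likewise for $k+1$), so $(A\mathbf{x}_j)_k\in(-1,1)$ and $(\mathbf{y}_j)_k=|(A\mathbf{x}_j)_k|$ lands in the equally spaced set $G_k:=\{\,m/(z_kz_{k+1}):0\le m\le z_kz_{k+1}-1\,\}$. (When the generating vector has the Korobov–type divisibility $z_k\mid z_{k+1}$, the difference already has denominator $z_{k+1}$, so one may take $G_k=\{0,1/z_{k+1},\dots,(z_{k+1}-1)/z_{k+1}\}$, with spacing $1/z_{k+1}$.)

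Next I would handle the last coordinate. Since the last row of $A$ is $(0,\dots,0,1)$ we have $(A\mathbf{x}_j)_d=(x_j)_d$. Write $s:=jz_d\bmod n\in\{0,\dots,n-1\}$, so that $(x_j)_d=s/n$ and $s=z_d\lfloor s/z_d\rfloor+(s\bmod z_d)$. Then the last entry of $\mathbf{b}$ equals $-\{n(x_j)_d/z_d\}\,z_d/n=-(s\bmod z_d)/n$, and
\[
(A\mathbf{x}_j+\mathbf{b})_d=(x_j)_d-\frac{s\bmod z_d}{n}=\Big\lfloor\frac{s}{z_d}\Big\rfloor\frac{z_d}{n}\geq 0,
\]
which lies in the equally spaced set $G_d:=\{\,m\,z_d/n:0\le m\le\lfloor (n-1)/z_d\rfloor\,\}$, of spacing $z_d/n$. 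Combining the two computations gives $\mathbf{y}_j\in G_1\times\cdots\times G_d$ for every $j$, i.e. all transformed points sit on one Cartesian grid; filling that grid with all of its nodes then yields the tensor–product rule used in the sequel.

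The algebra above is routine; the points that genuinely need care are the bookkeeping with the floor functions — verifying the ranges $0\le q_k\le z_k-1$ and $0\le s\bmod z_d<z_d$, and checking that the prescribed last entry of $\mathbf{b}$ is exactly the relevant negative fractional part so that the $d$-th coordinate collapses to a clean multiple of $z_d/n$ — and the (notational) fact that $\mathbf{b}$ must be read point-by-point, its last entry depending on the point being transformed. I also expect that the sharper reading of ``form a Cartesian grid'' — that the $n$ points actually exhaust the product $G_1\times\cdots\times G_d$ — is in general \emph{false}: counting the distinct values each coordinate attains as $j$ runs over $\{0,\dots,n-1\}$ (which involves $\gcd$ properties of $\mathbf{z}$ and $n$) shows the number of transformed points is typically smaller than $|G_1|\cdots|G_d|$, and this gap is precisely what motivates replacing the lattice rule by the denser tensor–product rule. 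Accordingly, the statement is best proved in the form ``the $\mathbf{y}_j$ lie on a Cartesian grid'', which is what the argument above delivers.
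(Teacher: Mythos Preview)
Your argument is correct and follows essentially the same route as the paper: write $\{jz_k/n\}/z_k=j/n-\lfloor jz_k/n\rfloor/z_k$, let the $j/n$ terms cancel in each of the first $d-1$ rows of $A$, and handle the last row via the fractional-part shift in $\mathbf{b}$, concluding that every $\mathbf{y}_j$ lies in a product of one-dimensional sets. The one substantive difference is that the paper does not stop at the coarse equally-spaced supersets $G_k=\{m/(z_kz_{k+1})\}$ you use, but instead records the \emph{actual} value sets $\Gamma_{n_i}^1$ together with their cardinalities $n_i=\mathrm{lcm}(z_i,z_{i+1})/\min(z_i,z_{i+1})$ and $n_d=\lceil n/z_d\rceil$; these exact counts are what feed into the tensor-product rule and the MDI-LR algorithm downstream, so you may wish to sharpen your containment accordingly (your Korobov remark already does this in the case $z_k\mid z_{k+1}$). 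Your closing observation that the $n$ points need not exhaust the product grid, and that the right reading is ``lie on a Cartesian grid'', is exactly the point the paper makes when it passes to the \emph{improved} lattice rule by adding the missing nodes.
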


\begin{proof}
	By the definition of $\mathbf{x}_j$, we have  $\mathbf{x}_j=\big(\{\frac{jz_1}{n}\},\{\frac{jz_2}{n}\},\cdots,\{\frac{jz_d}{n}\}\big)^{\intercal}$. A direct computation yields
	\begin{align}
		\mathbf{y}_j &= abs(A\mathbf{x}_j+\mathbf {b})
			=abs\left(\begin{array}{cccc} 
			\frac{1}{z_1}\{\frac{jz_1}{n}\}-\frac{1}{z_2}\{\frac{jz_2}{n}\}\\\frac{1}{z_2}\{\frac{jz_2}{n}\}-\frac{1}{z_3}\{\frac{jz_3}{n}\}\\\vdots\\\frac{1}{z_{d-1}}\{\frac{jz_{d-1}}{n}\}-\frac{1}{z_d}\{\frac{jz_d}{n}\}\\\{\frac{jz_d}{n}\}-\{ \frac{n}{z_d} \{\frac{jz_d}{n} \} \}\cdot\frac{z_d}{n}	
		\end{array}\right)
	\end{align}
{\color{black} Recall that $\{x\}$  and  $\lfloor x\rfloor$ denote respectively the fractional and integer parts of the number $x$.}  Because $$\frac{1}{z_i}\Big\{\frac{jz_i}{n}\Big\}=\frac{1}{z_i}\Big(\frac{jz_i}{n}-\Big\lfloor\frac{jz_i}{n}\Big\rfloor\Big)=\frac{j}{n}-\frac{1}{z_i} \Big\lfloor\frac{jz_i}{n} \Big\rfloor,$$  then $$\frac{1}{z_{i-1}} \Big\{\frac{jz_{i-1}}{n} \Big\}-\frac{1}{z_i} \Big\{\frac{jz_i}{n} \Big\}=\frac{1}{z_i} \Big\lfloor\frac{jz_i}{n} \Big\rfloor-\frac{1}{z_{i-1}} \Big\lfloor\frac{jz_{i-1}}{n} \Big\rfloor,$$ and
	\begin{equation}\label{eq13}
		\mathbf{y}_{j}={abs}\left(\begin{array}{cccc}
			\frac{1}{z_1}\{\frac{jz_1}{n}\}-\frac{1}{z_2}\{\frac{jz_2}{n}\}\\\frac{1}{z_2}\{\frac{jz_2}{n}\}-\frac{1}{z_3}\{\frac{jz_3}{n}\}\\\vdots\\\frac{1}{z_{d-1}}\{\frac{jz_{d-1}}{n}\}-\frac{1}{z_d}\{\frac{jz_d}{n}\}\\\{\frac{jz_d}{n}\}-\{\frac{\{\frac{jz_d}{n}\}}{\frac{z_d}{n}}\}\cdot\frac{z_d}{n}	
		\end{array}\right)={abs}\left(\begin{array}{cccc}
			\frac{1}{z_2}\lfloor\frac{jz_2}{n}\rfloor-\frac{1}{z_{1}}\lfloor\frac{jz_{1}}{n}\rfloor\\\frac{1}{z_3}\lfloor\frac{jz_3}{n}\rfloor-\frac{1}{z_{2}}\lfloor\frac{jz_{2}}{n}\rfloor\\\vdots\\\frac{1}{z_d}\lfloor\frac{jz_d}{n}\rfloor-\frac{1}{z_{d-1}}\lfloor\frac{jz_{d-1}}{n}\rfloor\\\frac{z_d}{n}\lfloor j-\frac{n}{z_d}\lfloor\frac{jz_d}{n}\rfloor\rfloor
		\end{array}\right).
	\end{equation}
	It is easy to check that 
	\begin{equation}\label{eq14}
		\frac{1}{z_i} \Bigl\lfloor \frac{jz_i}{n} \Bigr\rfloor-\frac{1}{z_{i-1}} \Bigl\lfloor \frac{jz_{i-1}}{n} \Bigr\rfloor=\left\{ \begin{array}{lcl}
			0,&0\leq j <\frac{n}{z_i}\\
			\frac{1}{z_i},&\frac{n}{z_i}\leq j <\frac{n}{z_{i-1}}\\
			\frac{1}{z_{i-1}}-\frac{1}{z_i},& \frac{n}{z_{i-1}} \leq j <\frac{2n}{z_i}\\
			\frac{2}{z_{i}}-\frac{1}{z_{i-1}},& \frac{2n}{z_{i}} \leq j <\frac{2n}{z_{i-1}}\\
			\frac{2}{z_{i-1}}-\frac{2}{z_{i}},& \frac{2n}{z_{i-1}} \leq j <\frac{3n}{z_{i}}\\
			\vdots&\vdots\\		
		\end{array} \right..
	\end{equation}

On the other hand, 	let
	\begin{align*}
		&	\Gamma_{n_1}^{1}:=\Big\{y_{s_1}\, | y_{s_1}={abs}\bigg(\frac{1}{z_2}\Big\lfloor\frac{iz_2}{n} \Big\rfloor-\frac{1}{z_1} \Big\lfloor\frac{iz_1}{n} \Big\rfloor \bigg),i=0,1,\cdots,n-1,s_1=0,1,\cdots,n_1-1\Big\},\\
		&	\Gamma_{n_2}^{1}:=\Big\{y_{s_2}\, | y_{s_2}={abs}\bigg(\frac{1}{z_3}\Big\lfloor\frac{iz_3}{n}\Big\rfloor-\frac{1}{z_2}\Big\lfloor\frac{iz_2}{n}\Big\rfloor\bigg),i=0,1,\cdots,n-1,s_2= 0,1,\cdots,n_2-1\Big\},\\
		&\quad 	\vdots \nonumber \\
		& \Gamma_{n_{d-1}}^{1}:=\Big\{y_{s_{d-1}}\, | y_{s_{d-1}}={abs} \bigg(\frac{1}{z_d} \Big\lfloor\frac{iz_d}{n} \Big\rfloor-\frac{1}{z_{d-1}} \Big\lfloor\frac{iz_{d-1}}{n} \Big\rfloor\bigg), i=0,1,\cdots,n-1, \atop \hskip 2.8 in s_{d-1}=0,1,\cdots,n_{d-1}-1\Big\},\\
		&	\Gamma_{n_d}^{1}:=\Big\{y_{s_d}\, | y_{s_d}={abs}\bigg(\frac{z_d}{n} \Big\lfloor i-\frac{n}{z_d}\Big\lfloor\frac{iz_d}{n} \Big\rfloor \Big\rfloor \bigg),i=0,1,\cdots,n-1,  s_d=0,1,\cdots,n_d-1\Big\},\\
		&
\Gamma_n^d:= \Gamma_{n_1}^{1}\otimes\Gamma_{n_2}^{1}\otimes\cdots\otimes\Gamma_{n_d}^{1}, 
	\end{align*}
where
	\begin{equation}\label{eq3.5}
			n_i=\frac{{\rm lcm}(z_i,z_{i+1})}{\min(z_i,z_{i+1})}, \quad i=1,2,\cdots,d-1; 
			\qquad   n_d=\Bigl\lceil\frac{n}{ z_d}\Bigr\rceil,  
	\end{equation} 
	and ${\rm lcm}$ represents the least common multiple.
	
For any $\mathbf{y}_k =(y_{s_1},y_{s_2},\cdots,y_{s_d})^\intercal \in\Gamma_{n}^{d}$,  we have  $k =s_1+s_2n_1+s_3n_1n_2+\cdots+s_dn_{1}n_2\cdots n_{d-1}$. 
Since $s_1=0,1,\cdots,n_1,\cdots,s_d=0,1,\cdots,n_d$, then $k=0,1,\cdots,n_1,\cdots,$ $(n_1n_2\cdots n_d)$. For $\mathbf{y}_{j}$ in the set described by  \eqref{eq13}, $\forall j=1,2,\cdots,n$, we get 
	\begin{equation}
		\mathbf{y}_{j} ={abs}\left(\begin{array}{cccc}
			\frac{1}{z_2}\lfloor\frac{jz_2}{n}\rfloor-\frac{1}{z_{1}}\lfloor\frac{jz_{1}}{n}\rfloor\\\frac{1}{z_3}\lfloor\frac{jz_3}{n}\rfloor-\frac{1}{z_{2}}\lfloor\frac{jz_{2}}{n}\rfloor\\\vdots\\\frac{1}{z_d}\lfloor\frac{jz_d}{n}\rfloor-\frac{1}{z_{d-1}}\lfloor\frac{jz_{d-1}}{n}\rfloor\\\frac{z_d}{n}\lfloor k-\frac{n}{z_d}\lfloor\frac{jz_d}{n}\rfloor\rfloor
		\end{array}\right).  
\end{equation}
Let $y_{i_1}:=abs(\frac{1}{z_2}\lfloor\frac{jz_2}{n}\rfloor-\frac{1}{z_{1}}\lfloor\frac{jz_{1}}{n}\rfloor)$, it follows that there exists an $s_1$ such that $s_1=n_1-\lfloor \frac{j}{n_1}\rfloor $ , resulting in $y_{i_1}=y_{s_1}\in\Gamma_{n_1}^{1}$. In the same way, $y_{i_2}\in\Gamma_{n_2}^{1},\cdots,y_{i_d}\in\Gamma_{n_d}^{1}$.  Therefore, we conclude that $\mathbf{y}_{j}=(y_{i_1},y_{i_2},\cdots,y_{i_d})^{\intercal}\in \Gamma_{n}^{d}$,
that is, the transformed lattice points have the Cartesian product structure. 
%
	\end{proof}

\begin{lemma}\label{lem-3.2}
	Let $\mathbf{x}_j=\bigl\{\frac{j\mathbf{z}}{n}\big\}$ for $j=1,2,\cdots, n-1$ denote the Korobov lattice  point set, that is $\mathbf{z}=(1,a,a^2,\cdots,a^{d-1})$, $1\leq a\leq n-1$ and $\gcd(a,n)=1$.  Then $\mathbf{y}_j:=abs(A\mathbf{x}_j+\mathbf {b}), j=0,1,\cdots,n-1$ satisfies the conclusion of Theorem \ref{the1}. Moreover,  if $a=[n]^{\frac{1}{d}}$, then the number of points in each direction of the lattice set $\Gamma_{n}^{d}$ is the same, that is, $n_1=n_2\cdots=n_d=a$. 
\end{lemma}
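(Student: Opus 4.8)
The plan is to dispatch the two claims in turn, the first being an immediate specialization of Theorem~\ref{the1}. A Korobov lattice is, by Example~\ref{ex2}, nothing but a rank-one lattice rule with the particular generating vector $\mathbf{z}=(1,a,a^2,\dots,a^{d-1})$, so Theorem~\ref{the1} applies once we check its standing hypotheses, namely that each component $z_i$ is a nonzero integer so that the matrix $A$ and the vector $\mathbf{b}$ in~\eqref{eq3.1} are well defined. Since $1\le a\le n-1$ and $\gcd(a,n)=1$, every power $a^{i-1}$ is coprime to $n$, hence nonzero modulo $n$; in the regime $a=[n]^{\frac1d}$ we moreover have $a^{i-1}\le a^{d-1}\le n$ for all $i\le d$, so no reduction modulo $n$ occurs and we may take $z_i=a^{i-1}$ literally. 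Theorem~\ref{the1} then yields that $\mathbf{y}_j=abs(A\mathbf{x}_j+\mathbf{b})$ fills out the Cartesian grid $\Gamma_n^d=\Gamma_{n_1}^1\otimes\cdots\otimes\Gamma_{n_d}^1$ with the $n_i$ given by~\eqref{eq3.5}.

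\textbf{Equal number of points per direction.} Here I would simply substitute $z_i=a^{i-1}$ into~\eqref{eq3.5}. For $1\le i\le d-1$ we have $z_i=a^{i-1}\mid a^{i}=z_{i+1}$, so $\mathrm{lcm}(z_i,z_{i+1})=a^{i}$ and $\min(z_i,z_{i+1})=a^{i-1}$, whence
\begin{equation*}
 n_i=\frac{\mathrm{lcm}(a^{i-1},a^{i})}{\min(a^{i-1},a^{i})}=\frac{a^{i}}{a^{i-1}}=a .
\end{equation*}
For the last direction, $z_d=a^{d-1}$ and $a=[n]^{\frac1d}$ (so $n=a^{d}$) give $n_d=\bigl\lceil n/z_d\bigr\rceil=\bigl\lceil a^{d}/a^{d-1}\bigr\rceil=\lceil a\rceil=a$. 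Hence $n_1=n_2=\cdots=n_d=a$, and as a sanity check $n_1n_2\cdots n_d=a^{d}=n$, consistent with the total number of lattice points.

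\textbf{Main obstacle.} The arithmetic is entirely routine; the only point that needs care is the precise meaning of the hypothesis $a=[n]^{\frac1d}$. Read strictly as $n=a^{d}$ it sits uneasily with the Korobov coprimality requirement $\gcd(a,n)=1$, so in the write-up I would fix a convention up front --- either $n=a^{d}$ with the coprimality clause interpreted only as far as is needed for~\eqref{eq2.1} to make sense, or $n$ prime with $a=\lfloor n^{1/d}\rfloor$, in which case $n/a^{d-1}$ is merely close to $a$ and one must additionally verify that $\lceil n/a^{d-1}\rceil=a$ still holds. Apart from this bookkeeping, the entire statement rests on the single divisibility chain $a^{0}\mid a^{1}\mid\cdots\mid a^{d-1}$, which forces every consecutive $\mathrm{lcm}/\min$ ratio to collapse to $a$.
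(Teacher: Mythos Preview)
Your proposal is correct and follows essentially the same approach as the paper: invoke Theorem~\ref{the1} for the Cartesian structure, then substitute $z_i=a^{i-1}$ into~\eqref{eq3.5} to compute $n_i=a$ for all $i$. The paper additionally writes out the sets $\Gamma_{n_i}^1$ explicitly in the Korobov case, but this is just a restatement of what Theorem~\ref{the1} already provides and adds no new logical content; your more direct citation of the theorem is equally valid. Your observation about the tension between $\gcd(a,n)=1$ and $n=a^d$ is a genuine issue with the statement that the paper itself does not address.
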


\begin{proof}
From theorem \ref{the1} we have 
\begin{equation}\label{eq3.7}
	\mathbf{y}_j=abs\left(\begin{array}{cccc}
		\frac{1}{a}\lfloor\frac{ja}{n}\rfloor\\\frac{1}{a^2}\lfloor\frac{ja^2}{n}\rfloor-\frac{1}{a}\lfloor\frac{ja}{n}\rfloor\\\vdots\\\frac{1}{a^{d-1}}\lfloor\frac{ja^{d-1}}{n}\rfloor-\frac{1}{a^{d-2}}\lfloor\frac{ja^{d-2}}{n}\rfloor\\\frac{a^{d-1}}{n}\lfloor j-\frac{n}{a^{d-1}}\lfloor\frac{ja^{d-1}}{n}\rfloor\rfloor
	\end{array}\right),
\end{equation}
and 
	\begin{align*}
		\Gamma_{n_1}^{1}&=\Big\{y_{s_1}\, | y_{s_1}={abs}\bigg(\frac{1}{a}\Big\lfloor\frac{ja}{n}\Big\rfloor\bigg),j=0,1,\cdots,n-1,s_1={0,1},\cdots,n_1-1\Big\}\\
	&=\Big\{0,\frac{1}{a},\frac{2}{a},\cdots,\frac{a-1}{a}\Big\},\\
		\Gamma_{n_2}^{1}&=\Big\{y_{s_2}\, | y_{s_2}={abs}\bigg(\frac{1}{a^2}\Big\lfloor\frac{ja^2}{n}\Big\rfloor-\frac{1}{a}\Big\lfloor\frac{ja}{n}\Big\rfloor \bigg),j=0,1,\cdots,n-1,s_2={0,1},\cdots,n_2-1\Big\}\\&=\Big\{0,\frac{1}{a^2},\frac{2}{a^2},\cdots,\frac{a-1}{a^2}\Big\},\\
	&\quad 	\vdots \nonumber \\
	\Gamma_{n_{d-1}}^{1}&=\Big\{y_{s_{d-1}}\, | y_{s_{d-1}}={abs}\bigg(\frac{1}{a^{d-1}}\Big\lfloor\frac{ja^{d-1}}{n}\Big\rfloor-\frac{1}{a^{d-2}}\Big\lfloor\frac{ja^{d-2}}{n}\Big\rfloor\bigg),j=0,1,\cdots,n-1, \\ &\hskip 3.1in s_{d-1}={0,1},\cdots,n_{d-1}-1\Big\}\\
	&=\Big\{0,\frac{1}{a^{d-1}},\frac{2}{a^{d-1}},\cdots,\frac{a-1}{a^{d-1}}\Big\},\\
	\Gamma_{n_d}^{1}&=\Big\{y_{s_d}\, | y_{s_d}={abs}\bigg(\frac{a^{d-1}}{n} \Big\lfloor j-\frac{n}{a^{d-1}}\Big\lfloor\frac{ja^{d-1}}{n}\Big\rfloor\Big\rfloor\bigg),j=0,1,\cdots,n-1,  \\
	&\hskip 2.85in s_d={0,1},\cdots,n_d-1\Big\}\\
	&=\Big\{0,\frac{a^{d-1}}{ n},\frac{2a^{d-1}}{ n},\cdots, \frac{n-a^{d-1}}{n}\Big\}.
\end{align*}

For $\mathbf{y}_{j}$ in the set described by \eqref{eq3.7}, 
	let $y_{i_1}:=abs(\frac{1}{a}\lfloor\frac{ja}{n}\rfloor)$, it follows that there exists an $s_1$ such that $s_1=n_1-\lfloor \frac{j}{n_1}\rfloor $, resulting in $y_{i_1}=\frac{s_1}{a}=y_{s_1}\in\Gamma_{n_1}^{1}$. Similarly, $y_{i_2}=\frac{s_2}{a^2}=y_{s_2}\in\Gamma_{n_2}^{1},\cdots,y_{i_d}\in\Gamma_{n_d}^{1}$.  Therefore,  we conclude that $\mathbf{y}_{j}=(y_{i_1},y_{i_2},\cdots,y_{i_d})^{\intercal}\in \Gamma_{n}^{d}$,
Obviously, the transformed lattice points have the Cartesian product structure.  Moreover, if $a=[n]^{\frac{1}{d}}$, then
\begin{align*}
 &n_i=\frac{{\rm lcm}(z_i,z_{i+1})}{\min(z_i,z_{i+1})}=\frac{z_{i+1}}{z_{i}}=a, \quad i=1,2,\cdots,d-1,\\
 &n_d=\lceil\frac{n}{z_d}\rceil=\frac{a^d}{a^{d-1}}=a.
\end{align*}
Hence,  $n_1=n_2=\cdots=n_{d-1}=n_d=a$. The proof is complete. 
\end{proof}

\begin{figure}[H]
	\centerline{
		\includegraphics[width=5.2in,height=2in]{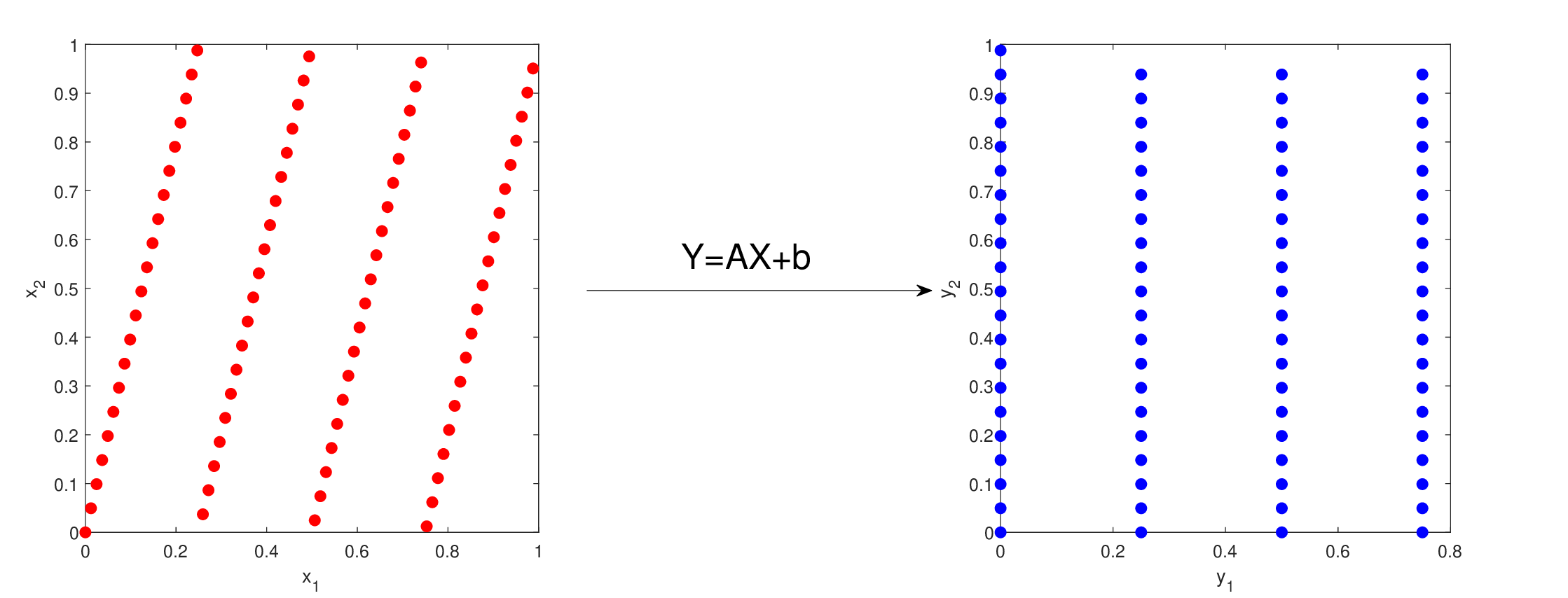}
	}
	\caption{Left: 81-point lattice with the generating vector $(1, 4)$. Right:  transformed lattice  after affine coordinate transformation.}	\label{fig.3} 
\end{figure}

Left graph of Figure \ref{fig.3} shows a 2-d example of 81-point rank-one lattice with the generating vector $(1, 4)$.  Right graph displays transformed lattice under  the affine coordinate transformation $\mathbf{y}=A\mathbf{x}+\mathbf{b}$ from $\mathbb{R}^2$ to itself, where
 \begin{equation}\label{eq2}
	A=\left(\begin{array}{cc}
		1&-\frac{1}{4}\\
		0&1\end{array}\right), \qquad
	\mathbf{b}=\left(\begin{array}{cc}
		0\\-\{\frac{81x_2}{4}\}\cdot\frac{4}{81}	
	\end{array}\right).
\end{equation}

\begin{figure}[H]
	\centerline{
		\includegraphics[width=5.5in,height=2.15in]{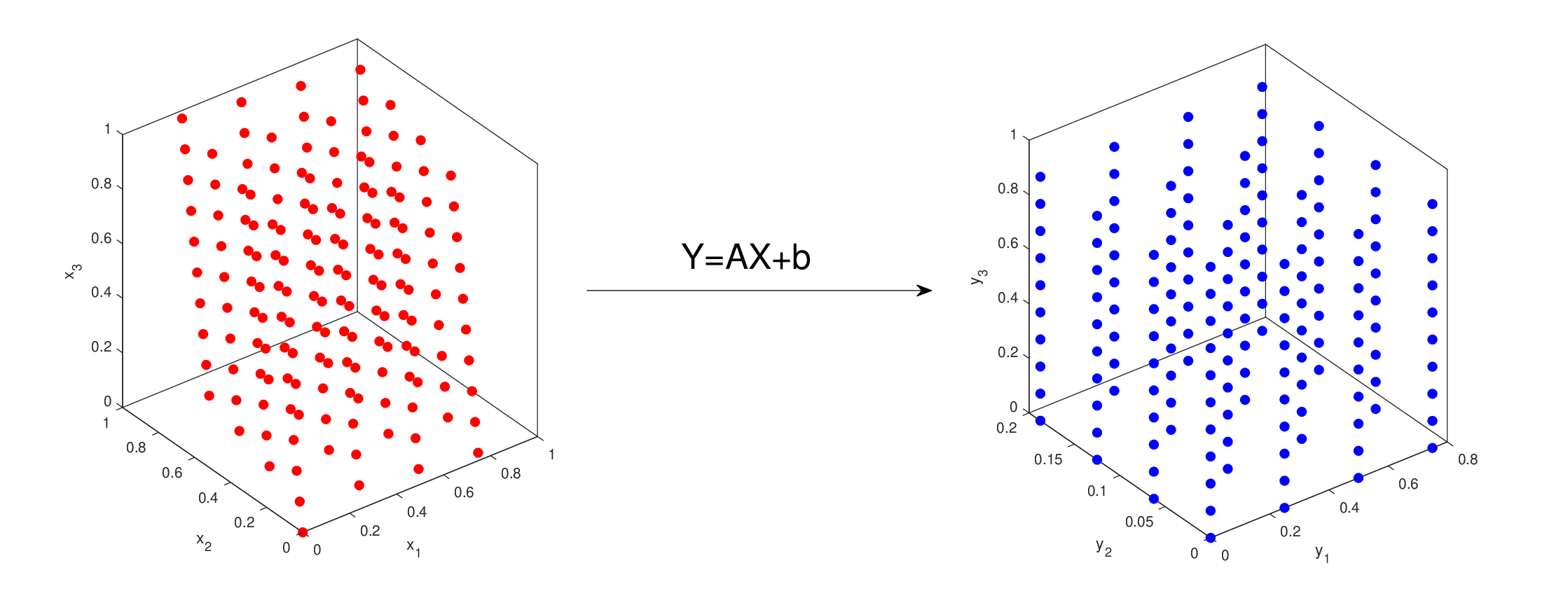}
	}
	\caption{Left: 161-point rank-one lattice with generating  vector is $(1, 4,16)$. Right: transformed lattice  after coordinate transformation.}	\label{fig.4}
\end{figure}

Figure \ref{fig.4} demonstrates a specific example in 3-d.  The left graph is a 161-point rank-one lattice with  generating vector  $(1, 4, 16)$. The right one shows  the transformed points under the affine coordinate transformation $\mathbf{y}=A\mathbf{x}+\mathbf{b}$ from $\mathbb{R}^3$ to itself, where
\begin{equation}\label{eq7}
	A=\left(\begin{array}{ccc}
		1&-\frac{1}{4}&0\\
		0&\frac{1}{4}&-\frac{1}{16}\\
		0&0&1\end{array}\right), \qquad 
	\mathbf{b}=\left(\begin{array}{ccc}
		0\\0\\-\{\frac{161x_3}{16}\}\cdot\frac{16}{161}	
	\end{array}\right).
\end{equation}
\subsection{Improved lattice rules}\label{sec-2.2.2}

From Figures \ref{fig.3} and \ref{fig.4} we see that the transformed lattice point sets do not exactly form tensor product grids because many lines miss one point.  By adding those ``missing" points which can be done systematically, we easily make them become tensor product grids in the transformed coordinate system. Since more integration points are added to the QMC lattice rule, the resulting quadrature rule is expected to be more accurate (which is supported by our numerical tests), hence, it is an improvement to the original QMC lattice rule. We also note that those added points would correspond to ghost points in the original coordinates. 

\begin{definition}[Improved QMC lattice rule]
	Let $\mathbf{z}\in \mathbb{R}^d$ and $\mathbf{x}_i=\big\{\frac{i\mathbf{z}}{n}\big\}, i=0,1,\cdots,n-1,$ be a rank-one lattice point set and $\mathbf{y}_i:=A\mathbf{x}_{i}+\mathbf {b}, i=0,1,\cdots,n-1$ for some $A\in \mathbb{R}^{d\times d}$ and $\mathbf{b}\in \mathbb{R}^d$ (which uniquely determine  an affine transformation). Suppose there exists $n^* (<<n)$ points so that together the $n+n^*$ points form a tensor product grid in the transformed coordinate system, then the QMC lattice rule obtained by using those $n+n^*$ sampling points is called {\color{black} an improved QMC lattice rule, and denoted by $	\widehat{Q}_{n,d}(f)$. }
\end{definition}

Figure \ref{fig.5} shows a 81-point (i.e., $n=81$) 2-d rank-one lattice with the generating vector $(1,7)$,  the transformed lattice (middle),  and the improved tensor product grid (right).  Three  points are added on the top, so $n^*=3$ for this example. 
\begin{figure}[H]
	\centerline{
		\includegraphics[width=5.5in,height=2.15in]{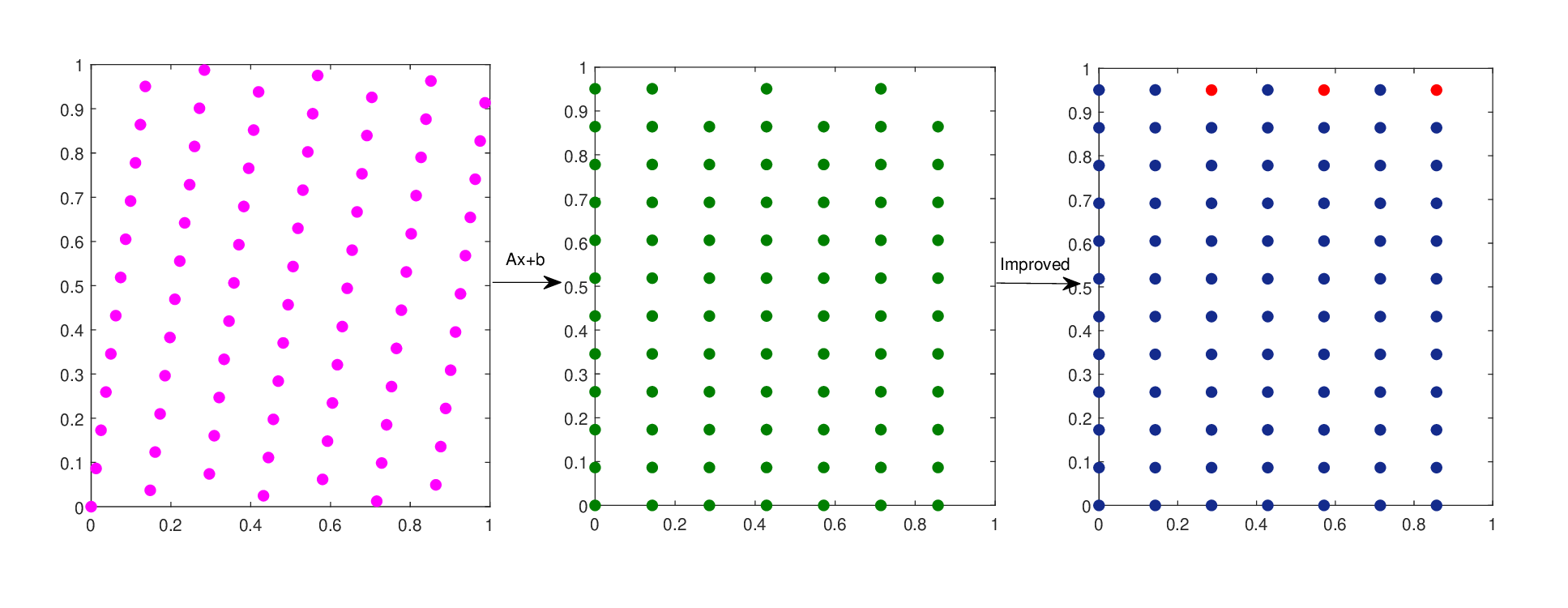}
	}
	\caption{Left: 81-point lattice with the generating vector $(1, 7)$. Middle: transformed lattice. Right:  improved tensor product grid after adding $3$ points.}	\label{fig.5}
\end{figure}

\section{The MDI-LR algorithm}\label{sec-3}
Since an improved rank-one  lattice is a tensor product grid in the transformed coordinate system and its corresponding quasi-Monte Carlo (QMC) rule is a tensor product rule with equal weight  $w=\frac{1}{n+n^*}$.  This tensor product improvement  allows us to apply the multilevel dimension iteration (MDI) approach, which was proposed by the authors in \cite{Feng-Zhong},  for a fast implementation of the original QMC lattice rule,  especially in the high dimension case. The resulting algorithm will be called the MDI-LR algorithm throughout this paper,  

\subsection{Formulation of the MDI-LR algorithm}\label{sec-2.2.3}
To formulate our MDI-LR algorithm,  we first recall the MDI idea/algorithm in simple terms (cf. \cite{Feng-Zhong}). 

For a tensor product rule, we need to compute a multi-summation with variable limits
 \[
 \sum_{i_1=1}^{n_1}\sum_{i_2=1}^{n_2}\cdots\sum_{i_d=1}^{n_d}f(\xi_{i_1},\xi_{i_2},\cdots,\xi_{i_d}), 
 \]
 which involves $n_1n_2\cdots n_d$ function evaluations for the given function $f$ if one uses the conventional approach by computing  function value at  each point independently, that inevitably leads  to the curse of dimensionality (CoD).  To make computation feasible in high dimensions,  it is imperative to save the computational cost by evaluating the summation more efficiently.  
 
 The main idea of the MDI  approach proposed in \cite{Feng-Zhong} is to compute those $n_1n_2\cdots n_d$ function values in cluster (not independently) and to compute the summation layer-by-layer based on a dimension iteration with help of symbolic computation. To the end, we write
\begin{equation}
	\sum_{i_1=1}^{n_1}\sum_{i_2=1}^{n_2}\cdots\sum_{i_d=1}^{n_d}f(\xi_{i_1},\xi_{i_2},\cdots,\xi_{i_d})=\sum_{i_{m+1}=1}^{n_{m+1}}\cdots\sum_{i_d=1}^{n_d}f_{d-m}(\xi_{i_{m+1}},\cdots,\xi_{d}),
\end{equation}
where $1\leq m<<d$ is fixed and 
\[
f_{d-m}(x_1,\cdots,x_{d-m}) :=\sum_{i_1=1}^{n_1}\cdots \sum_{i_{m}=1}^{n_m}f(\xi_{i_1},\cdots,\xi_{i_m},x_{1},\cdots,x_{d-m}).
\]
MDI approach recursively generates a sequence of symbolic functions $\{f_{d-m},f_{d-2m},$ $\cdots,f_{d-lm}\}$, each function has $m$ fewer arguments than its predecessor (because the dimension is reduced by $m$ at each iteration). As already mentioned above, the MDI approach  explores the lattice structure of the tensor product integration point set, instead of evaluating function values at all
integration points independently, it evaluates them in cluster and iteratively along $m$-coordinate directions, the function evaluation at any integration point is not completed until the last step of the algorithm is executed. In some sense,  the implementation strategy of the MDI approach is to trade large space complexity for low time complexity. That being said, however, the price to be paid by the MDI approach for the speedy evaluation of the multi-summation is that those symbolic function needs to be saved during the iteration process, which often takes up 
more computer memory.

For example, consider 2-d function $f(x_1,x_2)=x_1^2+x_1x_2+x_2^2$ and let $n_1=n_2=N$. In the standard approach, to compute the function value $f(\xi_{i_1}, \xi_{i_2})$ at an integration point  $(\xi_{i_1}, \xi_{i_2})$ , one  needs to compute  three multiplications  $\xi_{i_1}*\xi_{i_1}=\xi_{i_1}^2$, $\xi_{i_1}*\xi_{i_2}$ and $\xi_{i_2}*\xi_{i_2}=\xi_{i_2}^2$,  and  two additions. To compute $N^2$ function values, it requires a total of $3N^2$ multiplications and $3N^2-1$ additions.
On the other hand,  the first for-loop of the MDI approach generates $f_1(x)=\sum_{i_2}^{n}f(x,\xi_{i_2})$ which requires $N$ evaluations of $\xi_{i_2}*x_1$ (symbolic computations) and $N$ evaluations of $\xi_{i_2}*\xi_{i_2}$ , as well as $3(N-1)$ additions. The second for-loop generates $\sum_{i_1=1}^{N}f_1(\xi_{i_1})$ which requires $N$ evaluations of $\xi_{i_1}*\xi_{i_1}$ and $N$ evaluations of $\xi_{i_1}*\bar{\xi}_{i_2}$, as well as $3N-1$ additions.  After the second for-loop completes, we obtain the summation value. 
The computation complexity of the MDI approach consists of a total of $4N$ multiplications and $6N-4$ additions, which is much cheaper 
than the standard approach. In fact, the speedup is even more dramatic in higher dimensions. 

It is easy to see that the MDI approach can not be applied to the  QMC rule \eqref{eq1.2} because it is not in a multi-summation form. 
However,  we have showed in  Section \ref{sec-3a} that this  obstacle can be overcome by a simple affine coordinate transformation (i.e., 
change of variables) and adding a few integration points.  

Let $\mathbf{y}=A\mathbf{x}+\mathbf{b}$ denote the affine transformation, then the integral \eqref{eq1.1} is equivalent to 
\begin{equation}
	I_d(f)=\frac{1}{|A|}\int_{\widehat{\Omega}} f\bigl(A^{-1}(\mathbf{y}-\mathbf{b}) \bigr) \, d\mathbf{y} 
	=\frac{1}{|A|}\int_{\widehat{\Omega}} g(\mathbf{y})d\mathbf{y},  
\end{equation}
{\color{black} 
where $|A|$ stands for the determinant of $A\in \mathbb{R}^{d\times d}$ and 
$$
g(\mathbf{y}):=f\bigl(A^{-1}(\mathbf{y}-\mathbf{b} )\bigr), \qquad 
\widehat{\Omega}:=\bigl\{\mathbf{y}\, |  \mathbf{y}=A\mathbf{x}+\mathbf{b},\, \mathbf{x}\in \Omega \bigr\}.
$$
Then, our improved QMC rank-one lattice rule  for \eqref{eq1.2} in the $\mathbf{y}$-coordinate system takes the form
\begin{equation} \label{iQMC}
	\widehat{Q}_{n,d}(f)=\frac{1}{n+n^*}\sum_{i=0}^{n+n^*-1}f(\mathbf{x}_i)=\frac{1}{|A|}\sum_{s_1=1}^{n_1}\sum_{s_2=1}^{n_2}\cdots\sum_{s_d=1}^{n_d}g(y_{s_1},y_{s_2},\cdots,y_{s_d}).
\end{equation}

Let 
\[
J(g,\Omega):=\sum\limits_{s_1=1}^{n_1}\sum\limits_{s_2=1}^{n_2}\cdots\sum\limits_{s_d=1}^{n_d}g(y_{s_1},y_{s_2},\cdots,y_{s_d}),
\]
 Clearly, it is a multi-summation with variable limits. Thus,  we can apply the MDI approach to compute it efficiently. Before doing that, we first need to extend the MDI algorithm, Algorithm 2.3 of \cite{Feng-Zhong}, to the case of variable limits. We name  the extend algorithm as MDI$(d,g, \Omega_d, \mathbf{N}_d, m)$, which is defined as follows.
 
 \begin{algorithm}[H]
 	{\color{black}
 	\caption{MDI($d$, $g$, $\Omega, \mathbf{N}_d, m$) }
 	\label{alg:3}
 	\hspace*{0.02in} {\bf Inputs:} 
 	$d (\geq 4), g, \Omega, m (=1,2,3),\, \mathbf{N}_k=(n_1,n_2, \cdots, n_k), \, k=1,2,\cdots,d.$\\
 	\hspace*{0.02in} {\bf Output:}
 	$J=J(g,\Omega)$.
 	\begin{algorithmic}[1]
 		\State $\Omega_d=\Omega$, $g_d=g$, $\ell=[\frac{d}m]$.
 		\For $\,k=d:-m:d-\ell m$ (the index is decreased by $m$ at each iteration)
 		\State $\Omega_{d-m}=P_k^{k-m} \Omega_{k}$.
 		\State  Construct symbolic function $g_{k-m}$ by \eqref{eq2.14c} below).
 		\State  MDI$(k,g_k,\Omega_k,\mathbf{N}_k,m)$ :=MDI$(k-m,g_{k-m},\Omega_{k-m}, \mathbf{N}_{k-m},m)$.  
 		\EndFor 
 		\State $J=$ MDI$(d-\ell m, g_{d-\ell m} ,\Omega_{d-\ell m},\mathbf{N}_{d-\ell m},m)$.\\
 		\Return $J$.
 	\end{algorithmic}
 }
 \end{algorithm}
 \noindent
 Where $P^{k-m}_{k}$ denotes the natural embedding from $\mathbb{R}^k$
 to $\mathbb{R}^{k-m}$ by deleting the first $m$ components of 
 vectors in $\mathbb{R}^k$, and  
 \begin{equation}\label{eq2.14c}
 	g_{k-m}(s_1,\cdots,s_{k-m}) =  \sum_{i_1, \cdots, i_m=1}^{n_1,\cdots, n_m}  w_{i_1}w_{i_2}\cdots w_{i_m}\, g_k\bigl((\xi_1, \cdots, \xi_m, s_1, \cdots, s_{k-m}) \bigr). 
 \end{equation}
 
  \begin{remark}\
  	
 	\begin{itemize}
 		\item[(a)]  Algorithm \ref{alg:3} recursively generates a sequence of symbolic functions $\{ g_d, g_{d-m},$ $g_{d-2m}, \cdots g_{d-\ell m}  \}$, each function has $m$ fewer arguments than its predecessor. 
 		
 		\item[(b)] Since $m\leq 3$, when $d=2,3$, we simply use the underlying low dimensional QMC quadrature rules. As done in \cite{Feng-Zhong}, we name those low dimensional algorithms as 2d-MDI$(g, \Omega, \mathbf{N}_2)$ and 3d-MDI$(g, \Omega, \mathbf{N}_3)$, and  introduce the following conventions. 
 		
 		\begin{itemize}
 			\item If $k = 1$, set MDI$(k,g_k,\Omega_k,n_1,m) :=J(g_k,\Omega_k)$, which is computed by using the underlying 1-d QMC quadrature rule. 
 			\item If $k = 2$, set MDI$(k,g_k,\Omega_k,\mathbf{N}_k,m):=$ 2d-MDI$(g_{k}, \Omega_{k}, \mathbf{N}_k)$.
 			\item If $k = 3$, set MDI$(k,g_k,\Omega_k,\mathbf{N}_k,m):=$ 3d-MDI$(g_{k}, \Omega_{k}, \mathbf{N}_k)$.
 		\end{itemize}
 		We note that when $k=1,2,3$, the parameter $m$ becomes a dummy variable and can be given any value. 	
 		
 		\item[(c)] We also note that the MDI algorithm in \cite{Feng-Zhong} has an additional parameter $r$ which 
 		selects the 1-d quadrature rule. However,  such a choice is not needed here because the underlying QMC rule is used as the 1-d quadrature rule. 
 	\end{itemize}
 \end{remark}

 We are now ready to define our MDI-LR algorithm, which is denoted by  MDI-LR$(d,g, \Omega_d, \mathbf{N}_d, m)$, by using the above MDI algorithm  to evaluate $\widehat{Q}_{n,d}(f)$ in  \eqref{iQMC}.
 
 \begin{algorithm}[H]
{\color{black} 
	\caption{MDI-LR($f$, $\Omega, d, a, n$)} 
	\label{alg:1-1}
		\hspace*{0.02in} {\bf Inputs:} 
		$f, \Omega, d, a, n$. \\
		\hspace*{0.02in} {\bf Output:}
		$\widehat{Q}_{n,d}(f)=Q_{n+n^*,d}(f)$.
	\begin{algorithmic}[1]
		\State Initialize $\mathbf{z}=(1,a,a^2,\cdots,a^{d-1})$, $J=0, Q=0$,  $m=1$. \\
		 Construct matrix $A$ and $\mathbf{b}$ by (\eqref{eq3.1}).
		\State $g(\mathbf{y}) :=f(A^{-1}(\mathbf{y}-\mathbf{b}))$. 
		 \State Generate the vector $\mathbf{N}_d$ by \eqref{eq3.5}. 
		\State $\widehat{\Omega}:=\bigl\{\mathbf{y}\, |  \mathbf{y}=A\mathbf{x}+\mathbf{b},\, \mathbf{x}\in \Omega \bigr\}.$
		\State $J=$MDI$(d,g, \widehat{\Omega}, \mathbf{N}_d, m)$  
		\State $Q=\frac{J}{|A|}$. \\
		\Return $\widehat{Q}_{n,d}(f)=Q$.
	\end{algorithmic}
}
\end{algorithm}

Noting that here we set $m=1$, that is, the dimension is reduced by $1$ at each dimension iteration, this is because the numerical tests of  \cite{Feng-Zhong} shows that when $m=1$ the MDI algorithm is more efficient than when $m>1$. Also, the upper limit vector $\mathbf{N}_d$ depends on the choice of the underlying QMC rule. In Lemma \ref{lem-3.2} we showed that when $N=[n^{\frac{1}{d}}]$ and $a=N$, then $n_1=n_2=\cdots=n_d=N$, that is, the number of integration points is the same in each (transformed) coordinate direction. 
}

\section{Numerical performance tests} \label{sec-4}
In this section, we present extensive and purposely designed numerical experiments to gauge the performance of the proposed 
MDI-LR algorithm and to demonstrate its superiority over the standard implementations of the QMC lattice rule (SLR) and the improved lattice rule (Imp-LR) for computing high dimensional integrals. All our numerical experiments are done in Matlab on a desktop PC with Intel(R) Xeon(R) Gold 6226R CPU 2.90GHz and 32GB RAM.

\subsection{Two and three-dimensional tests} \label{sec-4.1}
We first test our MDI-LR on simple 2- and 3-d examples and to compare its performance 
(in terms of the CPU time) with the SLR and Imp-LR methods .

\medskip
{\bf Test 1.} Let $\Omega=[0,1]^2$ and consider the following 2-d integrands:
\begin{equation}\label{Ex1}
	f(x):= \frac{x_2\exp\bigl(x_1x_2\bigr)}{e-2}; \qquad \widehat{f}(x): = \sin\bigl(2\pi+x_1^2+x_2^2\bigr).
\end{equation}

Table \ref{tab:1-1} and \ref{tab:1-2} present the computational results (errors and CPU times) of
the SLR, Imp-LR and MDI-LR method for approximating $I_2(f)$ and $I_2(\widehat{f})$, respectively. Recall that the Imp-LR is obtained by adding some integration points on the boundary of the domain in the transformed  coordinates, and the MDI-LR algorithm provides a fast implementation of  the Imp-LR using the MDI approach. From Table \ref{tab:1-1} and \ref{tab:1-2}, we observe that all three methods require very little CPU time. The difference is almost negligible although the SLR is faster than the other two methods. Moreover, the Imp-LR and MDI-LR methods use function values at some additional sampling points on the boundary, which leads to higher accuracy compared to the SLR method as we predicated earlier. 
\begin{table}[H]
	\centering
	\begin{tabular}{ccccccc}
		\cline{1-7} \noalign{\smallskip}
		\multicolumn{1}{c}{}&\multicolumn{2}{c}{SLR(Standard LR)}&\multicolumn{2}{c}{Imp-LR(Improved LR)} &\multicolumn{2}{c}{MDI-LR} \\
		\cline{2-7} \noalign{\smallskip}
		\makecell[c]{Total \\ nodes ($n$)}&\makecell[c]{Relative\\ error} &\makecell[c]{CPU\\time} &\makecell[c]{Relative\\ error} &\makecell[c]{CPU\\time}&\makecell[c]{Relative\\ error} &\makecell[c]{CPU\\time} \\
		\noalign{\smallskip}\hline\noalign{\smallskip}
		101   & $1.332\times 10^{-2}$   & 0.0422&$1.218\times 10^{-3}$ & 0.0423 &$1.218\times 10^{-3}$ & 0.0877 \\
		501   & $5.169\times 10^{-3}$  & 0.0567 &$2.520\times 10^{-4}$ &0.0547&$2.520\times 10^{-4}$ &0.3230 \\
		1001   & $4.051\times 10^{-3}$  & 0.0610  &$1.269\times 10^{-4}$ &0.0657&$1.269\times 10^{-4}$ &0.5147\\
		5001  & $2.570\times 10^{-3}$  & 0.0755&$ 2.489\times 10^{-5}$ & 0.0754 &$ 2.489\times 10^{-5}$ &1.6242\\
		10001     & $2.094\times 10^{-4}$ & 0.0922 &$ 1.220\times 10^{-5}$    & 0.0921 &$ 1.220\times 10^{-5}$    & 3.9471 \\
		40001    & $7.294\times 10^{-5}$&  0.1782&$3.050\times 10^{-6}$& 0.1787&$3.050\times 10^{-6}$ & 7.0408  \\
		
		\noalign{\smallskip}\hline
	\end{tabular}
	\caption{Relative errors and CPU times of SLR, Improved LR and MDI-LR simulations with  $N=[n^{\frac{1}{d}}], a=N$,  $n_1=n_2=N$ for approximating $I_2(f)$.}
	\label{tab:1-1}       
\end{table} 
\begin{table}[H]
	\centering
	\begin{tabular}{ccccccc}
		\cline{1-7} \noalign{\smallskip}
		\multicolumn{1}{c}{}&\multicolumn{2}{c}{SLR(Standard LR)}&\multicolumn{2}{c}{Imp-LR(Improved LR)} &\multicolumn{2}{c}{MDI-LR} \\
		\cline{2-7} \noalign{\smallskip}
		\makecell[c]{Total \\ nodes ($n$)}&\makecell[c]{Relative\\ error} &\makecell[c]{CPU\\time} &\makecell[c]{Relative\\ error} &\makecell[c]{CPU\\time}&\makecell[c]{Relative\\ error} &\makecell[c]{CPU\\time} \\
		\noalign{\smallskip}\hline\noalign{\smallskip}
		101   & $1.163\times 10^{-2}$   & 0.0415&$1.072\times 10^{-3}$ & 0.0410 &$1.072\times 10^{-3}$ & 0.0980 \\
		501   & $6.794\times 10^{-3}$  &0.0539 &$1.399\times 10^{-4}$ & 0.0546 &$1.399\times 10^{-4}$ & 0.3498 \\
		1001   & $3.814\times 10^{-3}$  &0.0647 &$
		7.040\times 10^{-5}$ &0.0653  &$7.040\times 10^{-5}$ &0.5028 \\
		5001  & $1.858\times 10^{-3}$  &0.0723 &$1.3411\times 10^{-5}$ &0.0733 &$1.341\times 10^{-5}$ &1.7212 \\
		10001    & $1.175\times 10^{-4}$ & 0.0965  &$6.759\times 10^{-6}$    & 0.0945  &$6.759\times 10^{-6}$    & 3.4528  \\
		40001    & $2.937\times 10^{-5}$& 0.1386&$ 1.689\times 10^{-6}$    &  0.1399&$ 1.689\times 10^{-6}$    &  6.1104 \\
		\noalign{\smallskip}\hline
	\end{tabular}
	\caption{Relative errors and CPU times of SLR, Improved LR and MDI-LR simulations with   $N=[n^{\frac1{d}}], a=N$,  $n_1=n_2=N$ or approximating $I_2(\widehat{f})$.}
	\label{tab:1-2}       
\end{table}
\medskip
{\bf Test 2.}  Let $\Omega=[0,1]^3$ and we consider the following 3-d integrands:
\begin{equation}\label{ex3}
	f(x):= \frac{\exp\bigl(x_1+x_2+x_3\bigr)}{(e-1)^3};\qquad \widehat{f}(x):=\sin\bigl(2\pi+x_1^2+x_2^2+x_3^2\bigr).
\end{equation}

Tables \ref{tab:1-3} and \ref{tab:1-4} present the simulation results (errors and CPU time) of the SLR, Imp-LR, and MDI-LR methods for computing  $I_3(f)$ and $I_3(\widehat{f})$ in \textbf{Test 2.} We observe that the SLR method requires less CPU time in both simulations. The advantage of the MDI-LR method in accelerating the computation does not materialize in low dimensions as seen in {\bf Test 1}. Once again, the Imp-LR and MDI-LR  have higher accuracy compared to the SLR method because they use additional sampling points on the boundary of the transformed domain.
\begin{table}[H]
	\centering
	\begin{tabular}{ccccccc}
		\cline{1-7} \noalign{\smallskip}
		\multicolumn{1}{c}{}&\multicolumn{2}{c}{SLR(Standard LR)}&\multicolumn{2}{c}{Imp-LR(Improved LR)} &\multicolumn{2}{c}{MDI-LR} \\
		\cline{2-7} \noalign{\smallskip}
		\makecell[c]{Total \\ nodes ($n$)}&\makecell[c]{Relative\\ error} &\makecell[c]{CPU\\time} &\makecell[c]{Relative\\ error} &\makecell[c]{CPU\\time}&\makecell[c]{Relative\\ error} &\makecell[c]{CPU\\time} \\
		\noalign{\smallskip}\hline\noalign{\smallskip}
		101   & $3.426\times 10^{-3}$   & 0.0574&$4.985\times 10^{-3}$ &  0.0588 &$4.985\times 10^{-3}$ & 0.0877 \\
		1001   & $6.276\times 10^{-3}$  & 0.0634 &$1.249\times 10^{-3}$ & 0.0654 &$1.249\times 10^{-3}$ &0.2684 \\
		10001     & $9.920\times 10^{-4}$ & 0.0833 &$3.124\times 10^{-4}$    & 0.0877  &$3.124\times 10^{-4}$    & 0.6322  \\
		100001    & $ 5.717\times 10^{-4}$& 0.1500&$5.907\times 10^{-5}$    & 0.1499&$5.907\times 10^{-5}$    & 2.5866  \\
		1000001   & $1.369\times 10^{-5}$ &  1.0589&$ 1.249\times 10^{-5}$ & 1.0587 &$1.249\times 10^{-5}$ & 14.737\\
		
		10000001  & $8.441\times 10^{-6}$ &   9.8969  & $ 3.124\times 10^{-6}$ &10.280 & $3.124\times 10^{-6}$ &91.897 \\
		\noalign{\smallskip}\hline
	\end{tabular}
	\caption{Relative errors and CPU times of SLR, Improved LR and MDI-LR simulations with   $N=[n^{\frac1{d}}],a=N$, $n_1=n_2=n_3=N$  for computing  $I_3(f)$.}
	\label{tab:1-3}       
\end{table}
\begin{table}[H]
	\centering
	\begin{tabular}{ccccccc}
		\cline{1-7} \noalign{\smallskip}
		\multicolumn{1}{c}{}&\multicolumn{2}{c}{SLR(Standard LR)}&\multicolumn{2}{c}{Imp-LR(Improved LR)} &\multicolumn{2}{c}{MDI-LR} \\
		\cline{2-7} \noalign{\smallskip}
		\makecell[c]{Total \\ nodes ($n$)}&\makecell[c]{Relative\\ error} &\makecell[c]{CPU\\time} &\makecell[c]{Relative\\ error} &\makecell[c]{CPU\\time}&\makecell[c]{Relative\\ error} &\makecell[c]{CPU\\time} \\
		\noalign{\smallskip}\hline\noalign{\smallskip}
		101   & $1.866\times 10^{-2}$   &  0.0580&$1.008\times 10^{-3}$ &  0.0554 &$1.008\times 10^{-3}$ &  0.1366 \\
		1001   & $9.746\times 10^{-3}$  & 0.0628&$2.739\times 10^{-4}$ &0.0649 &$2.739\times 10^{-4}$ &0.3804 \\
		10001   & $1.001\times 10^{-3}$ & 0.0820 &$6.337\times 10^{-5}$    & 0.0828 &$6.337\times 10^{-5}$    & 1.1032  \\
		100001  & $7.063\times 10^{-4}$&  0.1443&$1.326\times 10^{-5}$    & 0.1557 &$1.326\times 10^{-5}$    & 4.8794  \\
		1000001 & $2.211\times 10^{-5}$ &   1.1163 &$2.810\times 10^{-6}$ & 1.2104&$2.810\times 10^{-6}$ & 20.305\\
		
		10000001 & $ 1.650\times 10^{-5}$ &   10.207 & $7.026\times 10^{-7}$ &10.427 & $ 7.026\times 10^{-7}$ &101.22\\
		\noalign{\smallskip}\hline
	\end{tabular}
	\caption{Relative errors and CPU times of SLR, Improved LR and MDI-LR simulations with $N=[n^{\frac1{d}}],a=N$, $n_1=n_2=n_3=N$   for computing  $I_3(\widehat{f})$.}
	\label{tab:1-4}       
\end{table}
\subsection{High-dimensional tests}\label{sec-4.2}
Since the MDI-LR method is designed for computing high-dimensional integrals, its performance for $d >> 1$ is more important and anticipated, which is indeed the main task of this subsection. 
First, we test and compare the performance (in terms of CPU time) of the SLR, Imp-LR, and MDI-LR methods for computing high-dimensional integrals as the number of lattice points grows due to the dimension increases.  Then, we also test the performance of the SLR and MDI-LR methods for computing high-dimensional integrals when the  number of lattice points increases slowly in the dimension $d$.

\medskip
{\bf Test 3.}  Let $\Omega=[0,1]^d$ for $2\leq d\leq 50$ and consider the following Gaussian integrand:
\begin{equation}\label{ex5}
	f(x)= \frac{1}{\sqrt{2\pi}}\exp\Bigl(-\frac{1}{2}|x|^2\Bigr),
\end{equation}
where $|x|$ stands for the Euclidean norm of the vector $x\in \mathbb{R}^d$. 

Table \ref{tab:2-1} shows the relative errors and CPU times of SLR, Imp-LR, and MDI-LR methods for approximating the Gaussian integral $I_d(f)$.  The simulation results indicate that SLR and Imp-LR methods are more efficient when $d < 7$, but they struggle to compute integrals when $d>11$ as the number of lattice points increases exponentially in the dimension. However, this is not a problem for the MDI-LR method, which can compute this high-dimensional integral easily. Moreover, the MDI-LR method improves the accuracy of the original QMC rule significantly by adding some integration points on the boundary of the transformed domain.
 
 \begin{table}[H]
 	\centering
 	\begin{tabular}{ccccccc}
 		\cline{1-7} \noalign{\smallskip}
 		\multicolumn{1}{c}{}&\multicolumn{2}{c}{\makecell[c]{SLR(Standard LR)\\ Total Nodes($1+10^d$)}}&\multicolumn{2}{c}{\makecell[c]{Imp-LR(Improved LR)\\ Total Nodes($1.1\times10^d$)} } &\multicolumn{2}{c}{\makecell[c]{MDI-LR\\ Total Nodes($1.1\times10^d$)} } \\
 		\cline{2-7} \noalign{\smallskip}
 		\makecell[c]{Dimension \\ ($d$)}&\makecell[c]{Relative\\ error} &\makecell[c]{CPU\\time} &\makecell[c]{Relative\\ error} &\makecell[c]{CPU\\time}&\makecell[c]{Relative\\ error} &\makecell[c]{CPU\\time} \\
 		\noalign{\smallskip}\hline\noalign{\smallskip}
 		2  & $4.802\times 10^{-3}$ &0.0622 & $5.398\times 10^{-4}$ & 0.0637& $5.398\times 10^{-4}$&0.1335 \\
 		4  & $3.796\times 10^{-3}$ &0.1068 & $1.131\times 10^{-3}$ & 0.1206& $1.131\times 10^{-3}$&0.5780 \\
 		6 & $7.780\times 10^{-3}$ &1.2450 & $1.723\times 10^{-3}$ & 1.2745& $1.723\times 10^{-3}$&1.2890 \\
 		
 		8  & $1.189\times 10^{-2}$ &124.91 & $2.315\times 10^{-3}$ & 126.85& $2.315\times 10^{-3}$&1.4083 \\
 		10  & $1.602\times 10^{-2}$ &13084  & $2.908\times 10^{-3}$ & 13255& $2.908\times 10^{-3}$&3.1418 \\
 		11   & $1.809\times 10^{-2}$ &132927 & $3.204\times 10^{-3}$ & 141665& $3.204\times 10^{-3}$&3.8265 \\
 		12   & failed &failed &failed &failed& $3.501\times 10^{-3}$&4.5919 \\
 		
 		\noalign{\smallskip}\hline
 	\end{tabular}
 	\caption{Relative errors and CPU times of SLR, Improved LR and MDI-LR simulations with  $N=[n^{\frac{1}{d}}],a=N$,   $n_1=\cdots=n_d=N$   for computing $I_d(f)$. }	\label{tab:2-1}     
 \end{table}

\begin{table}[H]
	\centering
	\begin{tabular}{ccccccc}
		\cline{1-7} \noalign{\smallskip}
		\multicolumn{3}{c}{}&\multicolumn{2}{c}{SLR} &\multicolumn{2}{c}{MDI-LR} \\
		\cline{4-7} \noalign{\smallskip}
		\makecell[c]{Dimension \\  ($d$)}&\makecell[c]{Total \\ nodes ($n$)}&\makecell[c]{ $a$ value}&\makecell[c]{Relative\\ error} &\makecell[c]{CPU\\time} &\makecell[c]{Relative\\ error} &\makecell[c]{CPU\\time} \\
		\noalign{\smallskip}\hline\noalign{\smallskip}
		2 & 1+$10^{3}$ & 31  & $4.8020\times 10^{-4}$ &0.0369 & $6.1474\times 10^{-5}$&0.432905 \\
		
		6 & 1+$10^{6}$ & 10  & $7.7798\times 10^{-3}$ &1.2450 & $1.7745\times 10^{-3}$&0.790102 \\
		10 & 1+$ 10^{6}$ & 4  & $5.3673\times 10^{-2}$ &1.2453 & $1.8683\times 10^{-2}$&0.582487 \\
		14 & 1+$ 10^{8}$ & 4  & $7.9282\times 10^{-2}$ &144.759 & $2.6253\times 10^{-2}$&0.536131 \\  
		
		18 & 1+$ 10^{9}$ & 3  & $1.5827\times 10^{-1}$ &1649.59 & $6.1158\times 10^{-2}$&0.774606 \\
		22 & 1+$ 10^{10}$ & 3  & $2.0007\times 10^{-1}$ &18694.04 & $7.5249\times 10^{-2}$&0.702708 \\  
		
		26 & 1+$ 10^{11}$ & 3  & $2.4341\times 10^{-1}$ &217381.41 & $8.9527\times 10^{-2}$&0.866122 \\
		30 & 1+$ 10^{11}$ & 3  & $2.9009\times 10^{-1}$ &269850.87 & $1.0399\times 10^{-1}$&1.045107 \\
		
		\noalign{\smallskip}\hline
	\end{tabular}
	\caption{Relative errors and CPU times of SLR and MDI-LR simulations with the same number of integration points for computing $I_d(f)$. }	\label{tab:2-2}     
\end{table}
Table \ref{tab:2-2} shows the relative errors and CPU times of the  SLR and MDI-LR methods for computing  $I_d(f)$  {\color{black} when the number of lattice points  increase slowly in the dimension $d$.} As the dimension increases, the CPU time required by the SLR method also increases sharply (see Figure \ref{fig.6}). When approximating the Gaussian integral of about 30 dimensions with $10^{11}$ lattice points, the SLR method requires $74$ hours to obtain a result with relatively low accuracy. In contrast, the MDI-LR method only takes about one second to obtain a more accurate value, this demonstrates that  the acceleration effect of the MDI-LR method is quite dramatic.

\begin{figure}[H]
	\centerline{
		\includegraphics[width=2.5in,height=2in]{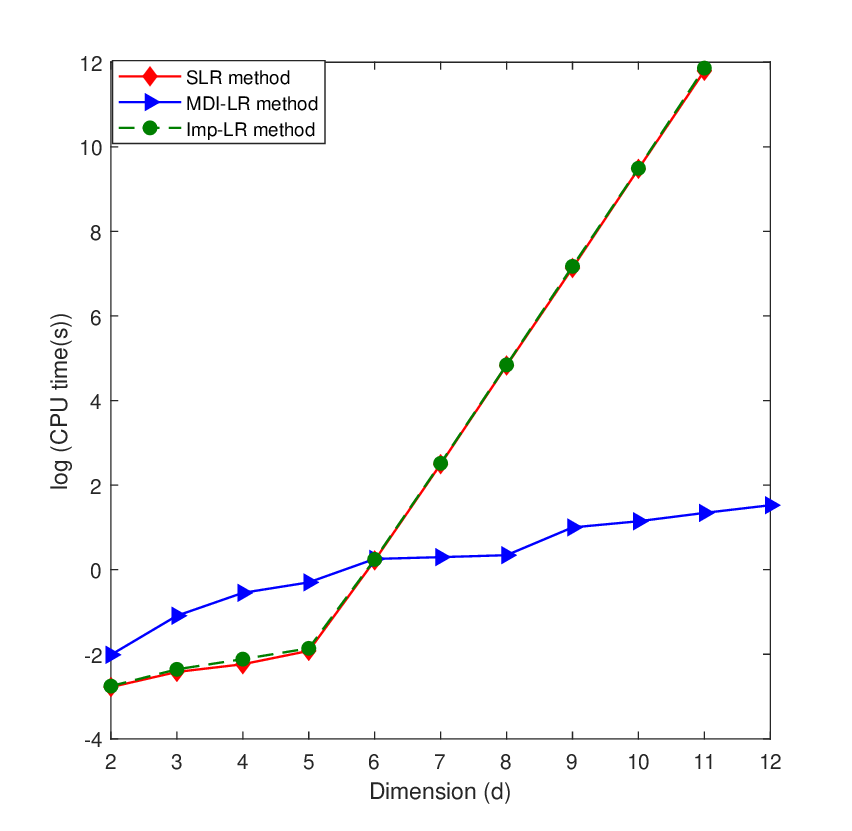}
		\includegraphics[width=2.5in,height=2in]{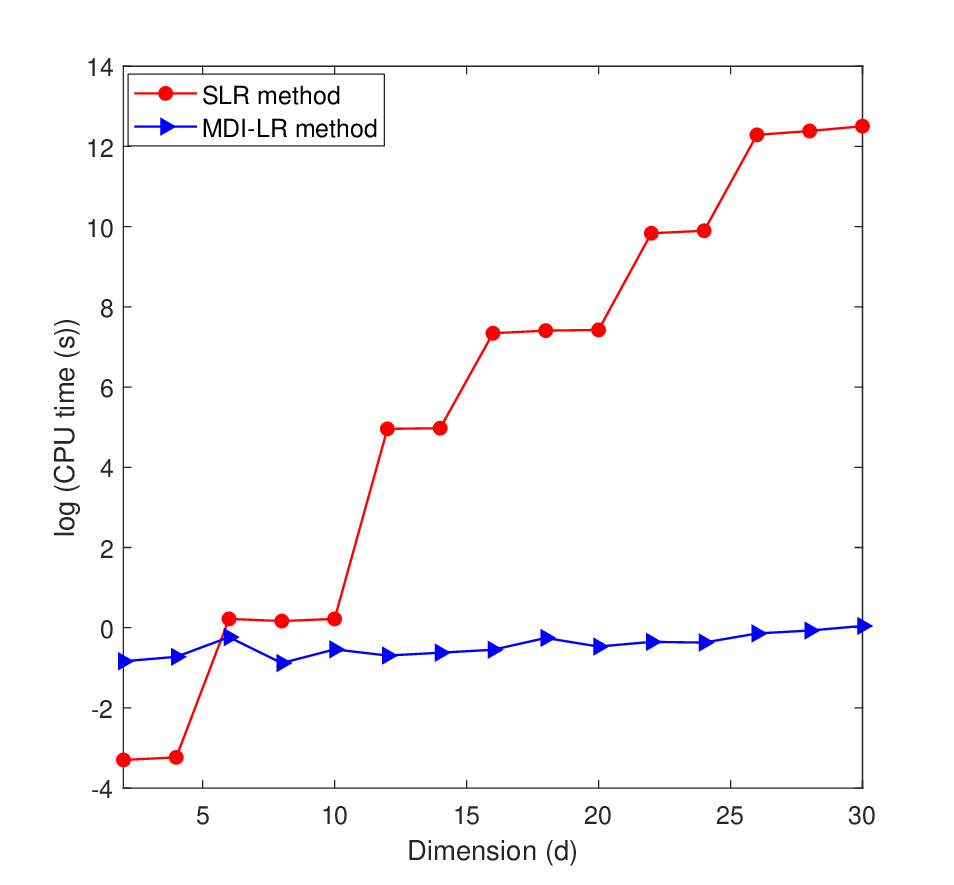}
	}
	\caption{CUP time comparison of SLR and MDI-LR simulations:  the number of lattice points increases in dimension (left),
		{\color{black} the number of lattice points increases slowly} (right). }	\label{fig.6}
\end{figure}

%
It is well known that it is difficult to obtain high-accuracy approximations in high dimensions because the number of integration points required is enormous. 
A natural question is whether the MDI-LR method can handle very high (i.e., $d\approx 1000$) dimensional integration with reasonable accuracy. First, we note that the answer is machine dependent, as expected. Next, we present a test on the computer at our disposal  to provide a positive answer to this question

\medskip
{\bf Test 4.} Let $\Omega=[0,1]^d$ and consider the following integrands:

\begin{equation}\label{ex-5}
	f(x)= \exp\Bigl(\sum_{i=1}^{d}(-1)^{i+1}x_i \Bigr), 
	\qquad \widehat{f}(x)= \prod_{i=0}^{d} \frac{1}{0.9^2+(x_i-0.6)^2}.
\end{equation}
We use the algorithm MDI-LR  to compute $I_d(f)$ and $I_d(\widehat{f})$ with parameters $a = 8, 10$, and an increasing sequence of $d$. The computed  results  are presented in Table \ref{tab:2-3}.  The simulation is stopped at $d = 1000$ because it is already in the very high dimension regime. These tests demonstrate the efficacy and potential of the MDI-LR method in efficiently computing high-dimensional integrals. 
However,  we note that in terms of efficiency and accuracy, the MDI-LR method underperforms its two companion methods,  namely, MDI-TP \cite{Feng-Zhong} and MDI-SG \cite{Z-F} methods. The main reason for the underperformance is that the original lattice rule is unable to provide high-accuracy integral approximations and the MDI-LR is only a fast implementation algorithm (i.e., solver) for the original lattice rule. 
Nevertheless, the lattice rule has its own advantages, such as allowing flexible integration points and giving better results for periodic integrands. 

\begin{table}[H]
	\centering
	\begin{tabular}{ccccccc}
		\cline{1-7} \noalign{\smallskip}
		\multicolumn{1}{c}{}&\multicolumn{3}{c}{\makecell[c]{$I_d(f)$\\ Nodes($1\times8^d$)}} &\multicolumn{3}{c}{\makecell[c]{$I_d(\widehat{f})$\\ Nodes($1\times20^d$)} } \\
		\cline{2-7} \noalign{\smallskip}
		\makecell[c]{Dimension\\($d$)}&\makecell[c]{$a$ value} &\makecell[c]{Relative\\ error} &\makecell[c]{CPU\\time(s)}&\makecell[c]{$a$ value} &\makecell[c]{Relative\\ error} &\makecell[c]{CPU\\time(s)} \\
		\noalign{\smallskip}\hline\noalign{\smallskip}
		10    &8 & $6.4884\times 10^{-3}$ &0.4329063 &20& $1.6107\times 10^{-3}$ &0.9851172 \\  
		100  &8  & $6.3022\times 10^{-2}$ &71.253076&20  & $1.6225\times 10^{-2}$ &11.1203255\\ 
		300  &8     & $1.7740\times 10^{-1}$ &1856.91018 &20 & $4.9469\times 10^{-2}$ &37.0903112  \\
		500  &8   & $2.7781\times 10^{-1}$ &8076.92429  &20 & $8.3801\times 10^{-2}$ &65.9497657     \\
		700 &8     & $3.6597\times 10^{-1}$ &20969.96162 &20  & $1.1925\times 10^{-1}$ &108.989057  \\
		900  &8     & $4.4337\times 10^{-1}$ &47870.50843  &20   & $1.5587\times 10^{-1}$ &157.487672  \\
		1000 &8    & $4.7845\times 10^{-1}$ & 69991.88017&20   & $1.7462\times 10^{-1}$ &189.132615  \\
		\bottomrule
	\end{tabular}
	\caption{Computed results for $I_d(f)$ and $I_d(\widehat{f})$ by  algorithm MDI-LR.}\label{tab:2-3}
\end{table}

\section{Influence of parameters}\label{sec-5}
The original MDI algorithm involves three crucial input parameters: $r$, $m$, and $N$. The parameter $r$ determines the one-dimensional basis value quadrature rule, while $m$ sets the step size in the multidimensional iteration, and $N$ represents the number of integration points in each coordinate direction. The algorithm MDI-LR  is similar to the original MDI, but uses the QMC rank-one lattice rule with generating vector $\mathbf{z}$, so the parameter $r$ is muted.  Here we focus on the Korobov approach in constructing the generating  vector $\mathbf{z}$, which is defined as $\mathbf{z}=\mathbf{z} (a):=(1,a,a^2, \cdots,a^{d-1})$. 
{\color{black} Moreover, 
the improved tensor product rule (in the transformed coordinate system) implemented by the algorithm Imp-LR has a variable upper limits in the summation
(cf. \eqref{iQMC}), hence,  $N$ is now replaced by $\mathbf{N}_d$ which is determined by the underlying QMC lattice rule.  Furthermore, as explained earlier, we set $m=1$ due to our experience in \cite{Feng-Zhong}. As a result,  the only parameter to select is $a$. 
Below, we first test the influence of the Korobov parameter $a$ on the efficiency of the algorithm MDI-LR  and then test dependence of its performance  on $\mathbf{N}_d$ and $d$. 
}
 
\subsection{Influence of parameter $a$}\label{sec-5.1}
In this subsection, we investigate the impact of the generating vector $\mathbf{z}=\mathbf{z}(a):=(1,a,a^2,\cdots,a^{d-1})$ in the  algorithm MDI-LR . We note that similar methods can be constructed using  other $\mathbf{z}$.

\medskip
{\bf Test 5.} Let $\Omega=[0,1]^d$ and consider the following integrands:
\begin{alignat*}{2}\label{ex6}
	&f(x)=  \frac{1}{\sqrt{2\pi}}\exp\Bigl(-\frac{1}{2}|x|^2\Bigr),
	&&\qquad \widehat{f}(x)= \cos \Bigl(2\pi+2\sum_{i=1}^{d} x_i \Bigr), \\
	&  \widetilde{f}(x)= \prod_{i=0}^{d} \frac{1}{0.9^2+(x_i-0.6)^2}.
\end{alignat*}
We compare the performance of the algorithm MDI-LR  with different Korobov parameters $a$ while holding other parameters unchanged when computing $I_d(f)$, $I_d(\widehat{f})$, and $I_d(\widetilde{f})$.
\\
\begin{figure}[H]
	\centerline{
		\includegraphics[width=2.5in,height=2in]{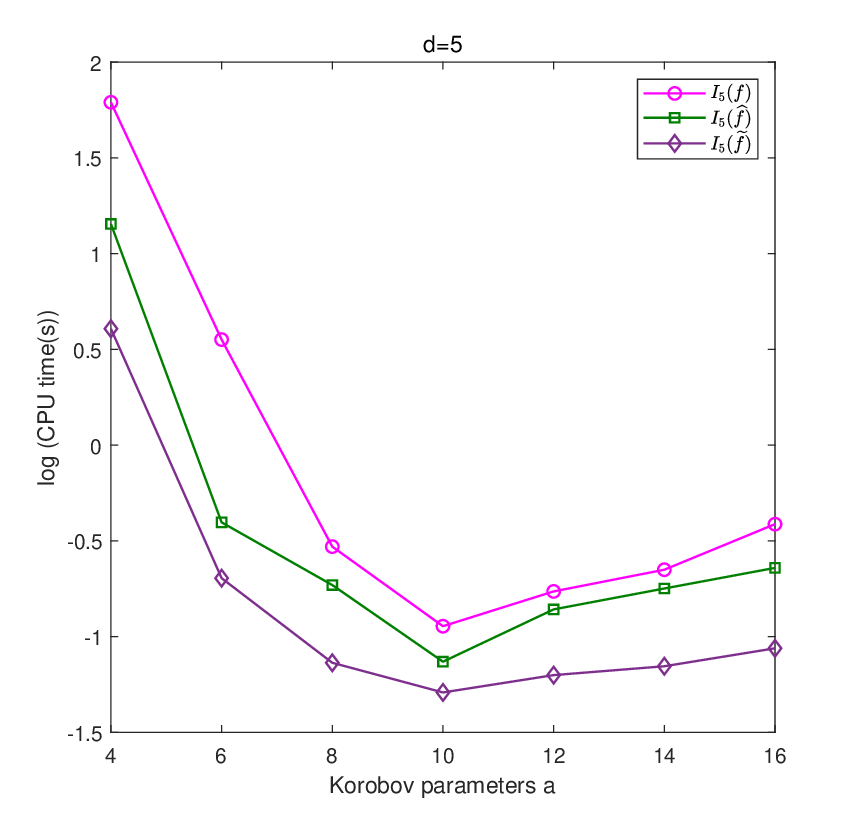}
		\includegraphics[width=2.5in,height=2in]{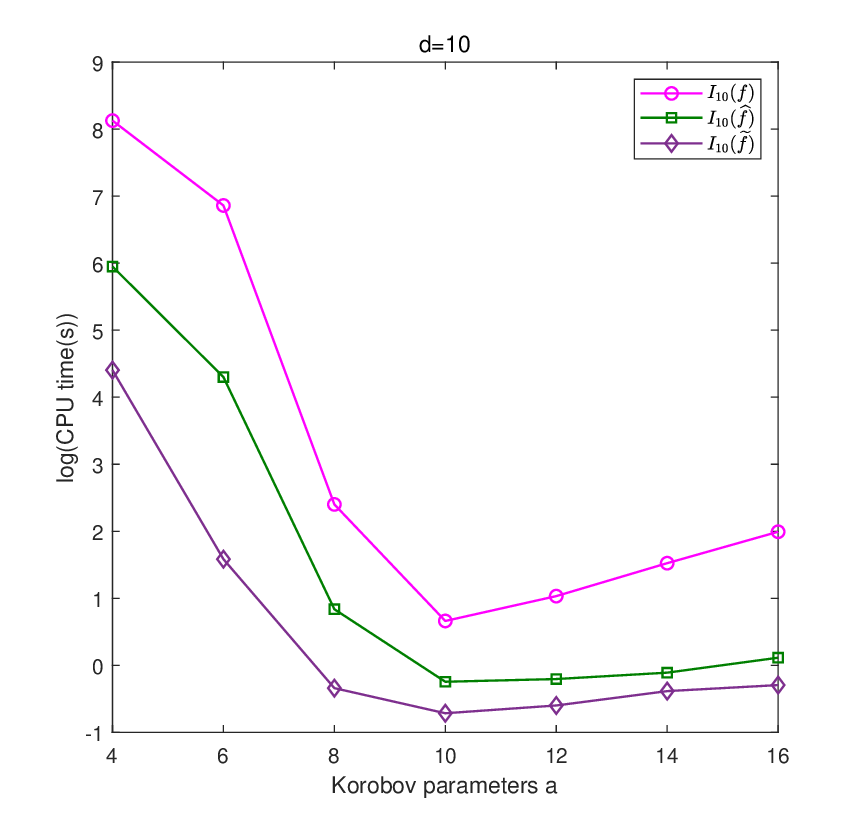}
	}
	\centerline{
	\includegraphics[width=2.5in,height=2in]{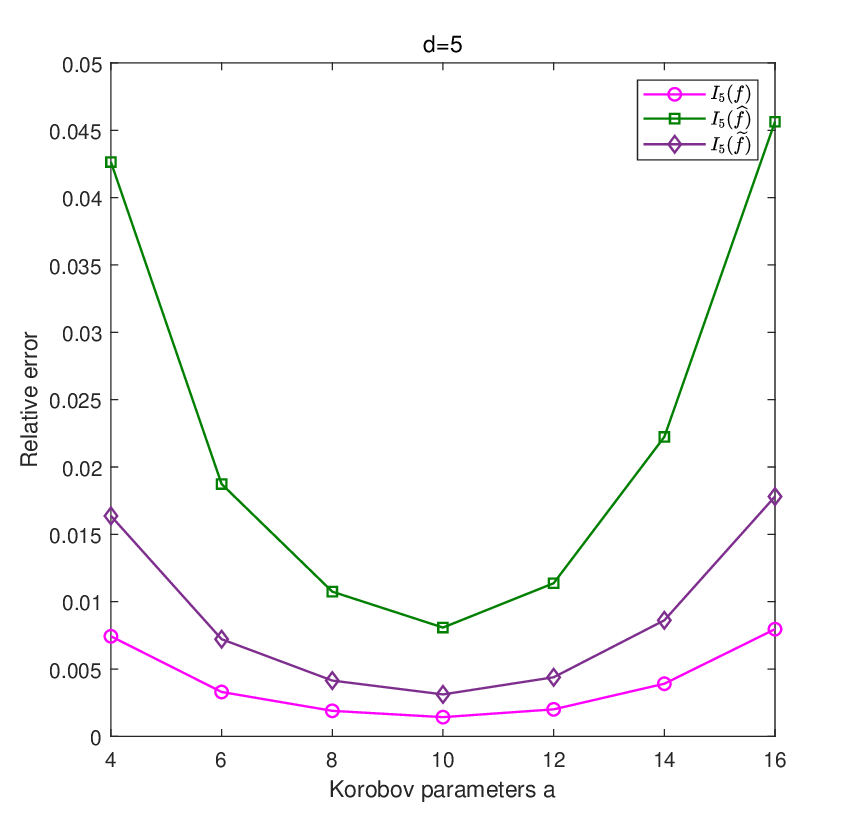}
	\includegraphics[width=2.5in,height=2in]{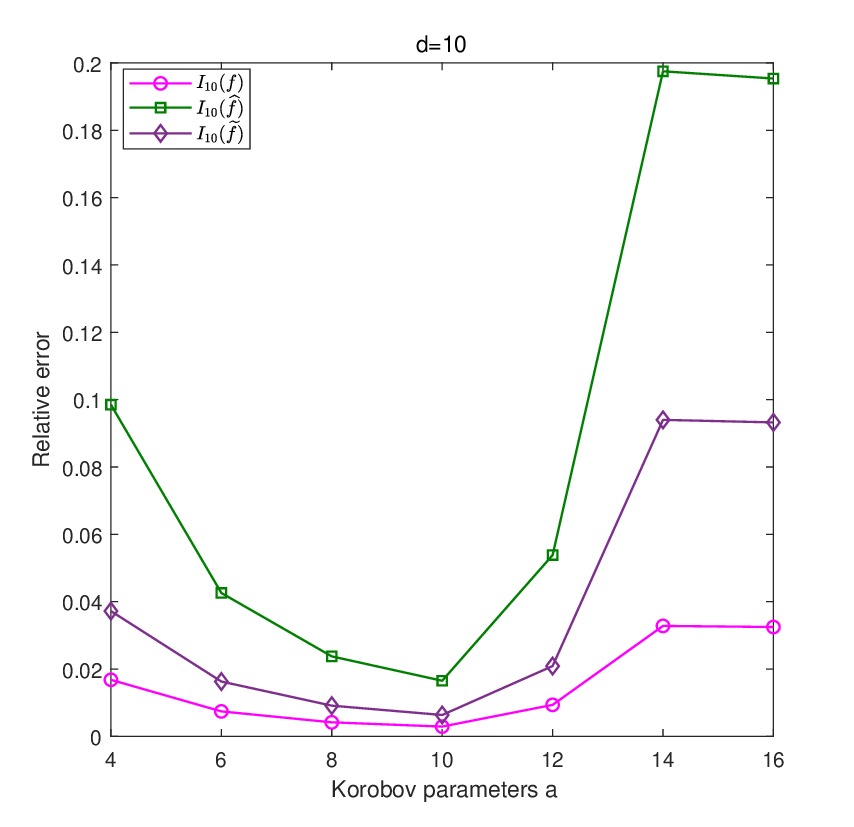}
}
	\caption{Performance comparison of algorithm MDI-LR  with $n=1+10^{d}$ and $a=4,6,8,10,12,14,16$ for computing $I_d(f)$, $I_d(\widehat{f})$ and $I_d(\widetilde{f})$. Top left: $d=5$, CPU time comparison. Top right: $d=10$, CPU time comparison.
		Bottom left: $d=5$, comparison of relative errors. Bottom right: $d=10$, comparison of relative errors}	\label{fig.7}
\end{figure}

Figure \ref{fig.7} shows the computed results for $d = 5, 10$ and $a = 4, 6, 8, 10, 12, 14, 16$, respectively. We  observe that the algorithm MDI-LR  with different parameters $a$ has different accuracy  and the effect could be significant. 
These results indicate that the algorithm is most efficient when {\color{black} $a=N$, where $N=[n^{\frac{1}{d}}]$ and $n$ represents the total number of integral points.}  This is because when a smaller $a$ is used, although fewer integration points need to be evaluated in each coordinate direction in the first $d-1$ dimension iterations, since the total number of integral points $n$ is the same, the amount of computation will increase dramatically. When using a larger $a$,  more integration points need to be used in each coordinate direction in the first $d-1$ dimension  iterations. {\color{black} Only when the integration points are equally distributed to each coordinate  direction, the  efficiency of the algorithm MDI-LR can be optimized.}  A total of $100$ points are shown in Figure \ref{fig.8}.  When $a=2$, only $2$ iterations in the $x_1$-direction are needed, but $50$ iterations in the $x_2$-direction must performed, hence,  a total of $52$ iterations in the two directions are required. On the other hand, when $a=20$, a total of $25$ iterations in the two directions are required.  It is easy to check that  the least total of $20$ iterations 
occurs when $a=10$.  The difference in accuracy is obvious, because the different $a$ leads to different generating vector $\mathbf{z}$, which in turn results in different integration points. 
We note that it was already well studied in the literature on how to choose $a$ to achieve the highest accuracy 
(cf. \cite{ }).
\begin{figure}[H]
	\centerline{
		\includegraphics[width=6.35in,height=1.75in]{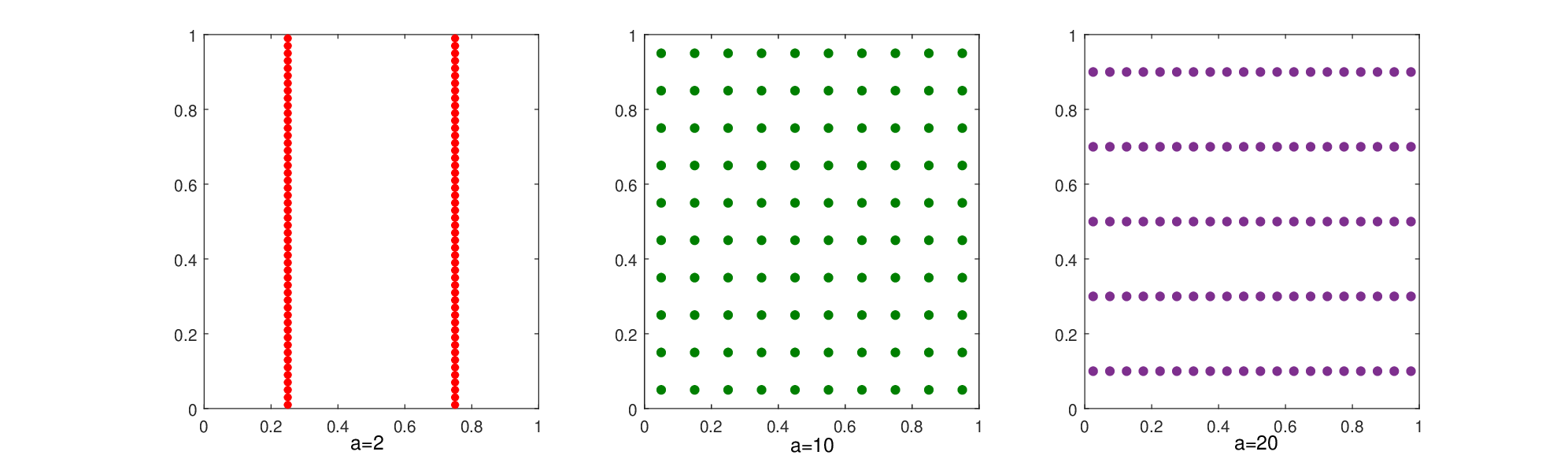}
	}
	\caption{Distribution of 100 integration points in the transformed coordinate system when  $a= 2, 10, 20,$ respectively.}	\label{fig.8}
\end{figure}
\subsection{Influence of parameter $N= [n^{\frac{1}{d}}] $}\label{sec-5.2}
{\color{black}
In the previous section, we know that the algorithm is most efficient when $a=N$, where $N$ represents the number of integration points in each direction. This section aims to investigate the impact of $N$ on the MDI-LR algorithm. For this purpose, we conduct tests by setting $a=N$ and $d=5$ and $d=10$.
}

\smallskip
{\bf Test 6.} Let $\Omega$, $f$, $\widehat{f}$ and $\widetilde{f}$ be the same as in {\bf Test 5.}

\smallskip
Table \ref{tab:3-1}, \ref{tab:3-2}, and \ref{tab:3-3} present a performance comparison for algorithm MDI-LR with  $d=5,10$ and $N=4,6,8,10,12,14,16$, respectively.  We note that the quality of the computed results also depend on  types of the integrands. As expected, more integration points must be used to achieve a good accuracy for very oscillatory and fast growth integrands.
\begin{table}[H]
	\centering
	\begin{tabular}{cccccc}
		\cline{1-6} \noalign{\smallskip}
		\multicolumn{2}{c}{}&\multicolumn{2}{c}{$d=5$ } &\multicolumn{2}{c}{$d=10$ } \\
		\cline{3-6} \noalign{\smallskip}
		\makecell[c]{ N(n)}&\makecell[c]{Korobov\\ parameter $(a)$}&\makecell[c]{Relative\\ error} &\makecell[c]{CPU\\time(s)} &\makecell[c]{Relative\\ error} &\makecell[c]{CPU\\time(s)} \\
		\noalign{\smallskip}\hline\noalign{\smallskip}
		4($1+4^{d}$)  & 4 & $8.6248\times 10^{-3}$   & 0.1456465   & $1.8003\times 10^{-2}$   & 0.3336161\\
		6($1+6^{d}$)  & 6 & $3.8967\times 10^{-3}$   & 0.1911801   & $8.0284\times 10^{-3}$   & 0.5690320\\
		8 ($1+8^{d}$)  &8  & $2.2145\times 10^{-3}$ &  0.3373442   & $4.5314\times 10^{-3}$ &  0.9552591  \\
		10 ($1+10^{d}$)  & 10& $1.4271\times 10^{-3}$& 0.3884146& $2.9078\times 10^{-3}$& 1.9385378   \\	
		12($1+12^{d}$)  &12 &  $9.9601\times 10^{-4}$ & 0.6545521  &  $2.0234\times 10^{-3}$ &3.5639475\\
		14($1+14^{d}$)  &14 &  $ 7.3448\times 10^{-4}$ & 0.7224777   &  $1.4889\times 10^{-3}$ & 6.0036393  \\	
		16($1+16^{d}$)  &16 &  $5.6396\times 10^{-4}$ &   1.0909097  &  $1.1414\times 10^{-3}$ &8.4313528 \\		
		\noalign{\smallskip}\hline
	\end{tabular}
	\caption{Performance comparison of algorithm MDI-LR with $d=5,10, a=N$ and $N=4,6,8,10,12,14,16$ for computing $I_d(f)$.}   \label{tab:3-1}  
\end{table}

\begin{table}[H]
	\centering
	\begin{tabular}{cccccc}
		\cline{1-6} \noalign{\smallskip}
		\multicolumn{2}{c}{}&\multicolumn{2}{c}{$d=5$ } &\multicolumn{2}{c}{$d=10$ } \\
		\cline{3-6} \noalign{\smallskip}
		\makecell[c]{ N(n)}&\makecell[c]{Korobov\\ parameter $(a)$}&\makecell[c]{Relative\\ error} &\makecell[c]{CPU\\time(s)} &\makecell[c]{Relative\\ error} &\makecell[c]{CPU\\time(s)} \\
		\noalign{\smallskip}\hline\noalign{\smallskip}
		4($1+4^{d}$)  & 4 & $4.9621\times 10^{-2}$   & 0.1323887  & $1.0585\times 10^{-1}$   & 0.2775234\\
		6($1+6^{d}$)  & 6 & $2.2181\times 10^{-2}$   & 0.1955847   & $4.6141\times 10^{-2}$   & 0.4267706\\
		8 ($1+8^{d}$)  &8  & $1.2558\times 10^{-2}$ &   0.2689113   & $2.5836\times 10^{-2}$ &  0.5697773  \\
		10 ($1+10^{d}$)  & 10& $8.0791\times 10^{-3}$&  0.3227299& $1.6517\times 10^{-2}$& 0.7828456   \\	
		12($1+12^{d}$)  &12 &  $5.6328\times 10^{-3}$ &  0.4056192  &  $1.1470\times 10^{-2}$ &0.9228344\\
		14($1+14^{d}$)  &14 &  $ 4.1513\times 10^{-3}$ & 0.4940739   &  $8.4305\times 10^{-3}$ & 1.0968489  \\	
		16($1+16^{d}$)  &16 &  $3.1863\times 10^{-3}$ &   0.6079693   &  $6.4576\times 10^{-3}$ &1.2933549 \\		
		\noalign{\smallskip}\hline
	\end{tabular}
	\caption{Performance comparison of  algorithm MDI-LR with $d=5,10, a=N$ and $N=4,6,8,10,12,14,16$ for computing $I_d(\widehat{f})$.}   \label{tab:3-2}  
\end{table}

\begin{table}[H]
	\centering
	\begin{tabular}{cccccc}
		\cline{1-6} \noalign{\smallskip}
		\multicolumn{2}{c}{}&\multicolumn{2}{c}{$d=5$ } &\multicolumn{2}{c}{$d=10$ } \\
		\cline{3-6} \noalign{\smallskip}
		\makecell[c]{ N(n)}&\makecell[c]{Korobov\\ parameter $(a)$}&\makecell[c]{Relative\\ error} &\makecell[c]{CPU\\time(s)} &\makecell[c]{Relative\\ error} &\makecell[c]{CPU\\time(s)} \\
		\noalign{\smallskip}\hline\noalign{\smallskip}
		4($1+4^{d}$)  & 4 & $1.9003\times 10^{-2}$   & 0.1254485   & $3.9895\times 10^{-2}$   & 0.2460844\\
		6($1+6^{d}$)  & 6 & $8.5331\times 10^{-3}$   &0.1802281 & $1.7625\times 10^{-2}$   & 0.3613987\\
		8 ($1+8^{d}$)  &8  & $4.8390\times 10^{-3}$ & 0.2114595  & $9.9155\times 10^{-3}$ &  0.4414383  \\
		10 ($1+10^{d}$)  & 10& $3.1153\times 10^{-3}$&  0.2748469& $6.3531\times 10^{-3}$& 0.4892808   \\	
		12($1+12^{d}$)  &12 &  $2.1729\times 10^{-3}$ &  0.3092816 &  $4.4172\times 10^{-3}$ & 0.5859328\\
		14($1+14^{d}$)  &14 &  $ 1.6018\times 10^{-3}$ & 0.3602077  &  $3.2488\times 10^{-3}$ & 0.6783681 \\	
		16($1+16^{d}$)  &16 &  $1.2296\times 10^{-3}$ &  0.4157161   &  $2.4897\times 10^{-3}$ &0.7819849 \\		
		\noalign{\smallskip}\hline
	\end{tabular}
	\caption{Performance comparison of algorithm MDI-LR with $d=5,10, a=N$ and $N=4,6,8,10,12,14,16$ for computing $I_d(\widetilde{f})$.}   \label{tab:3-3}  
\end{table}

\section{Computational complexity}\label{sec-6} 

\subsection{The relationship between the CPU time and $N$} \label{sec-6.1} 
In this subsection, we examine the relationship between CPU time and the parameter  $N=[n^{\frac1{d}}]$ and $a=N$ using a regression technique based on test data.
\begin{table}[H]
	\centering
	\begin{tabular}{llllll}
		\toprule
		\makecell[c]{Integrand\\}&\makecell[c]{$a$} &\makecell[c]{$m$} &\makecell[c]{$d$} &\makecell[c]{Fitting function}  &\makecell[c]{R-square\\}\\
		
		\midrule
		$f(x)$ &N&  1& 5 & $h_1(N) =(0.007569)*N^{1.772}$ &0.9687   \\
		$\widehat{f}(x)$&N&  1& 5  & $h_2(N)=(0.02326)*N^{1.165}$ &0.9920    \\
		$\widetilde{f}(x)$&N&  1& 5  & $h_3(N)=( 0.03592 )*N^{0.8767}$ &0.9946     \\ 
		$f(x)$&N&  1& 10  &$h_4(N) =( 0.002136)*N^{2.992}$  &0.9968    \\
		$\widehat{f}(x)$&N&  1& 10  &$h_5(N) =(0.05679)*N^{1.125}$  &0.9984    \\
		$\widetilde{f}(x)$&N&  1& 10  &$h_6(N)=( 0.07872)*N^{0.8184}$  &0.9901    \\
		\bottomrule
	\end{tabular}
	\caption{Relationship between the CPU time and parameter $N$.} \label{tab:4-1}
\end{table}
 Figures \ref{fig.9} and \ref{fig.10} show CPU time as a function of $N$ obtained by the  least squares regression with the fitted function given in Table \ref{tab:4-1}. All results show that CPU time grows in proportion to $N^3$.
\begin{figure}[H]
	\centerline{
		\includegraphics[width=1.7in,height=1.65in]{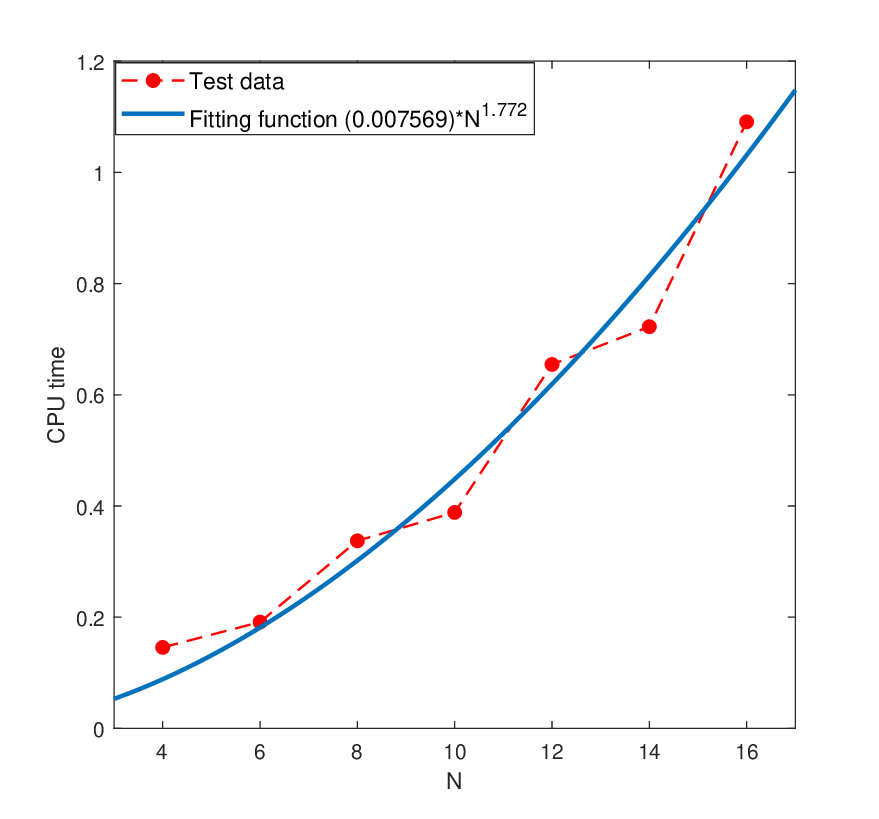}
		\includegraphics[width=1.7in,height=1.65in]{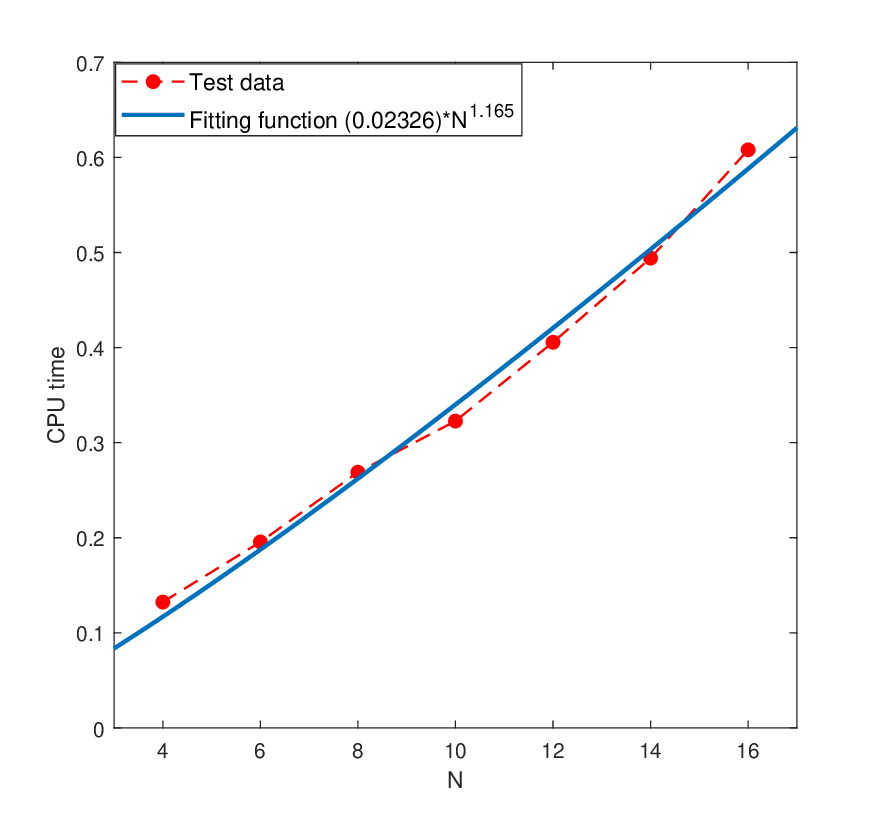}
		\includegraphics[width=1.7in,height=1.65in]{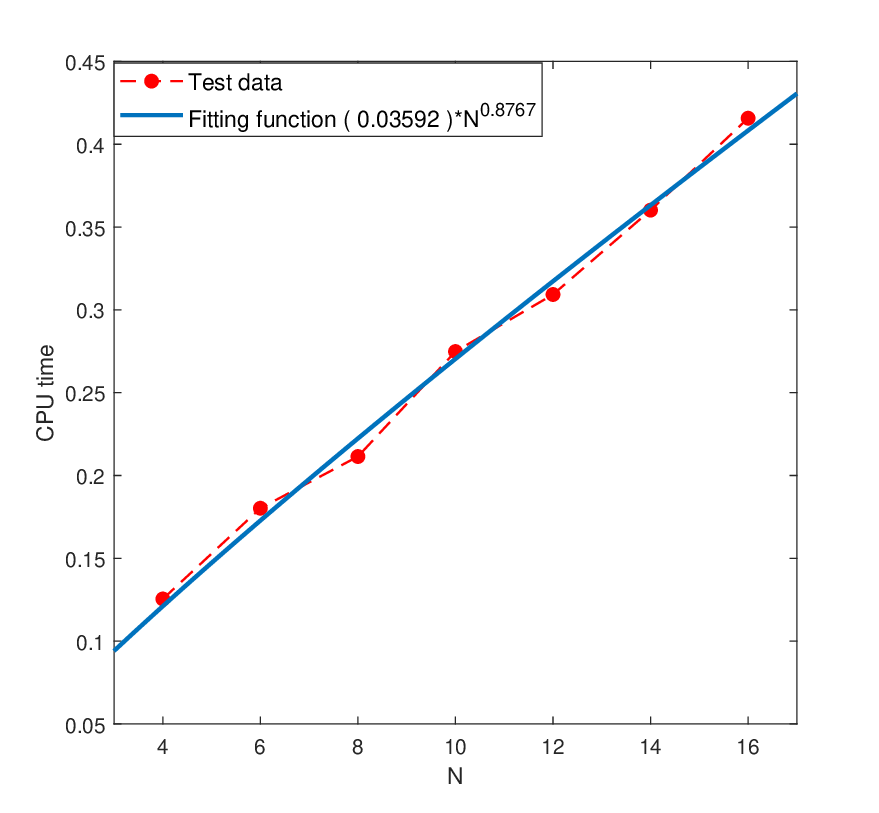}
	}
	\caption{The relationship between the CPU time and parameter $N$ when $d=5$: $I_d(f)$ (left), $I_d(\widehat{f})$ (middle), $I_d(\widetilde{f})$ (right).}
	\label{fig.9}       
\end{figure}

\begin{figure}[H]
	\centerline{
		\includegraphics[width=1.7in,height=1.65in]{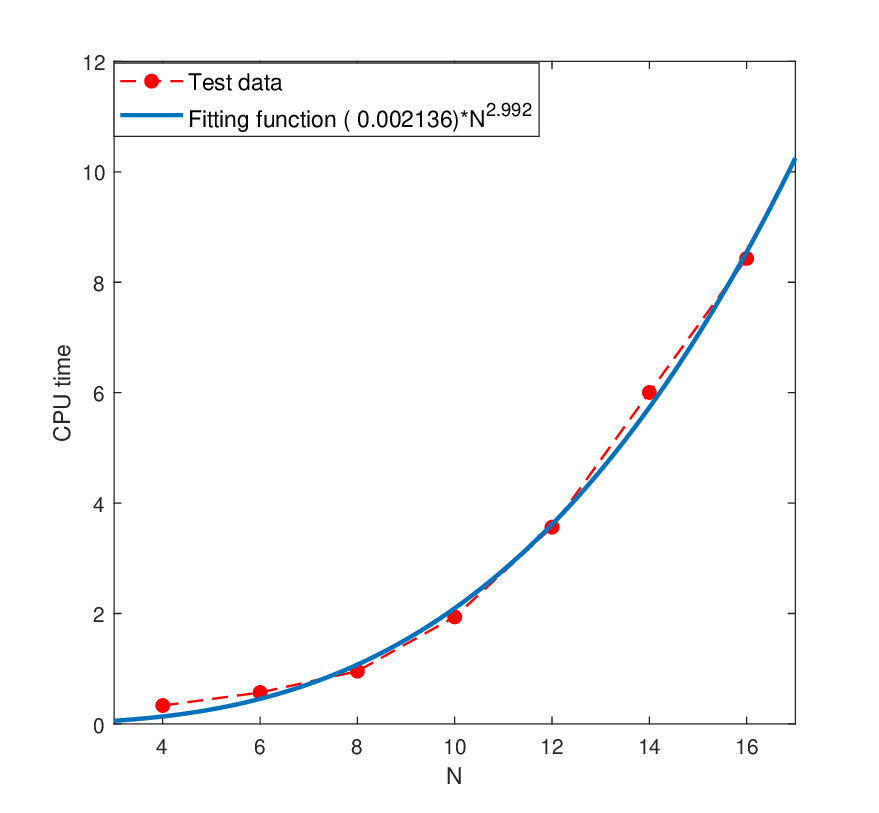}
		\includegraphics[width=1.7in,height=1.65in]{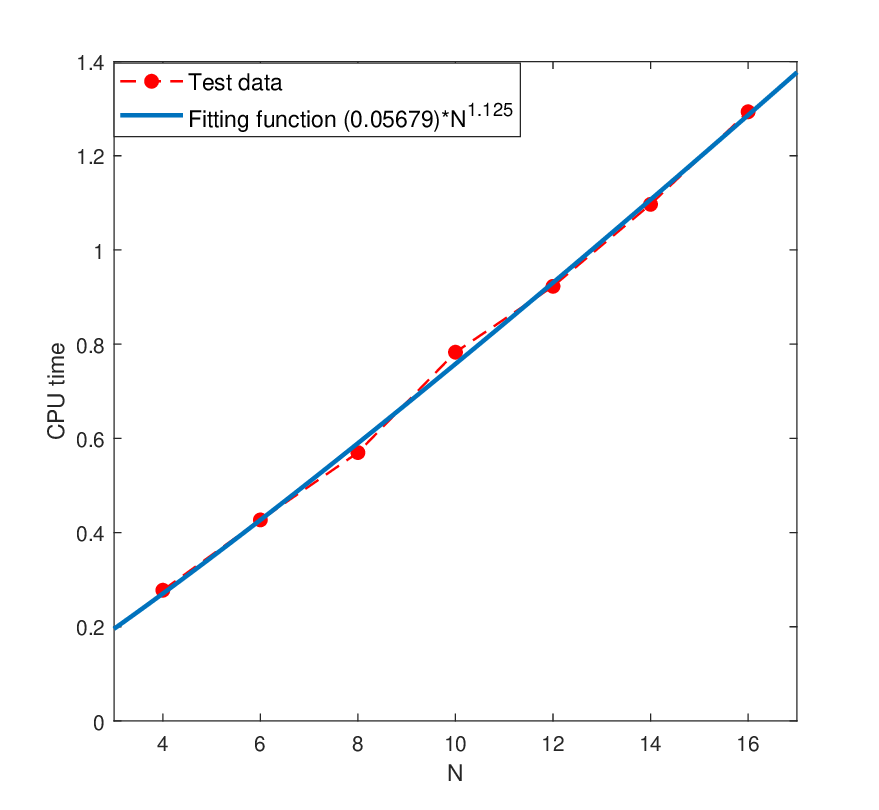}
		\includegraphics[width=1.7in,height=1.65in]{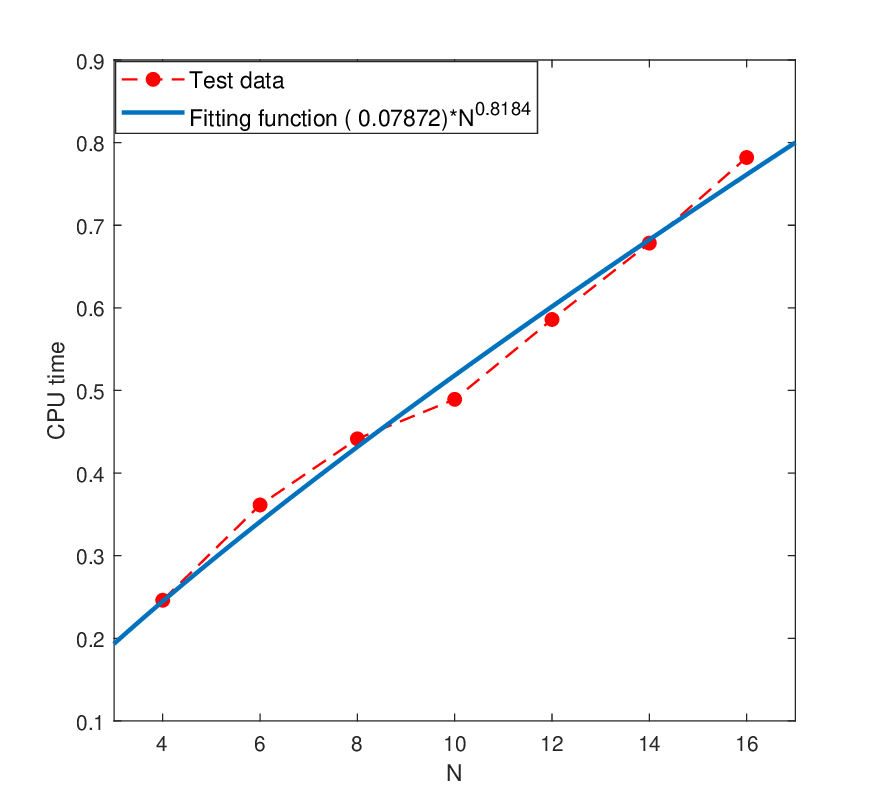}
	}
	\caption{Relationship between the CPU time and parameter $N$ when $d=10$: $I_d(f)$ (left), $I_d(\widehat{f})$ (middle), $I_d(\widetilde{f})$ (right).}
	\label{fig.10}       
\end{figure}

\subsection{The relationship between the CPU time and the dimension $d$}\label{sec-6.2} 
In this subsection, we exploit the computational complexity (in terms of CPU time as a function of $d$) using the least squares regression on numerical test data.

\medskip
{\bf Test 7.} Let $\Omega=[0,1]^d$, we consider the following five integrands:
\begin{alignat*}{2}
	&f_1(x)= \exp\Bigl(\sum_{i=1}^{d}(-1)^{i+1}x_i \Bigr),
	&&\qquad f_2(x)= \prod_{i=1}^{d} \frac{1}{0.9^2+(x_i-0.6)^2},\\
	&f_3(x)= \frac{1}{\sqrt{2\pi}}\exp\Bigl(-\frac{1}{2}|x|^2\Bigr), 
	&&\qquad f_4(x)= \cos\Bigl(2\pi+\sum_{i=1}^{d} 2x_i\Bigr), \\
	&f_5(x)= \exp\Bigl(\sum_{i=1}^{d}(-1)^{i+1}x_i^2 \Bigr),&&\qquad f_6(x)=(1+\sum_{i=1}^{d}x_i)^{-(d+1)} .
\end{alignat*}
Figure \ref{fig:7} displays the the CPU time as functions of $d$ obtained by the least square regression whose analytical expressions are given in Table \ref{tab:4-2}. We note that the parameters of the algorithm  MDI-LR only affect the coefficients of the fitted function, not the power of the polynomials. These results show that the CPU time required by the proposed algorithm MDI-LR  grows at most with polynomial order $O(d^3N^2)$.

\begin{table}[H]
	\centering
	\begin{tabular}{lccclc}
		\cline{1-6}\noalign{\smallskip}
		\makecell[c]{Integrand\\}&\makecell[c]{$a$} &\makecell[c]{$N$}&\makecell[c]{$m$}  &\makecell[c]{Fitting function}  &\makecell[c]{$R$-square\\}\\
		\noalign{\smallskip}\hline\noalign{\smallskip}
		
		$f_1$
		&8&  8&1  & $g_1=(1.057e-06)*N^2d^3$ &0.9973   \\
		&10&  10&1  & $g_2=(1.192e-06)*N^2d^3$ &0.9995   \\
		&20&  20&1& $g_3=( 1.433e-06)*N^2d^{3}$ &0.9978     \\ 
		\hline
		
		$f_2$
		&10&  10&1& $g_4=0.0001774*Nd^{1.611}$ &0.9983     \\
		&14&  14&1& $g_5=0.003028*Nd^{1.147}$ &0.9987     \\
		&20&  20& 1 & $g_6=0.000539*Nd^{1.41}$ &0.9964    \\
		\hline
		
		$f_3$ 
		&8&  8&1&  $g_{7}=(7.334e-06)*N^2d^3$ &0.9983    \\
		&10&  10&1& $g_{8}=(9.321e-06)*N^2d^{3}$ &0.9986     \\
		&14&  14&1& $g_{9}=(1.339e-05)*N^2d^{3}$ &0.9972     \\
		\hline
		
		$f_4$  
		
		&10&  10&1 & $g_{10}=(1.164e-06)*N^2d^3$ &0.9988     \\
		&20&  20&1 & $g_{11}=(1.319e-06)*N^2d^3$ &0.9974     \\
		\hline
		$f_5$ 
		&10&  10&1&$g_{12}=(6.479e-05)*N^2d^{2.557}$ &0.9996       \\
		&14&  14&1&$g_{13}=(1.164e-05)*N^2d^3$ &0.9993       \\
		\hline
		$f_6$ 
		&10&  10&1&$g_{14}=(1.556e-06)*N^2d^3$ &0.9983       \\
		&20&  20&1&$g_{15}=(8.328e-06)*N^2d^{2.431}$ &0.9998       \\
		\noalign{\smallskip}\hline
	\end{tabular}
	\caption{The relationship between CPU time as a function of the dimension $d$.}\label{tab:4-2}     
\end{table}

We assess the quality of the fitted curves using the $R$-square criterion  in Matlab, defined by $R$-$\mbox{\rm square}=1-\frac{\sum_{i}^n(y_i-\widehat{y}_i)^2}{\sum_{i}^n(y_i-\overline{y})^2}$, where $y_i$ is a test data output, $\widehat{y}_i$ is the predicted value, and $\overline{y}$ is the mean of $y_i$. As shown in Table \ref{tab:4-2}, the $R$-square values of all fitted functions are close to $1$, indicating their high accuracy. These results support the observation that the CPU time grows no more than cubically with the dimension $d$. Combined with the results of {\bf Test 6} in Section \ref{sec-6.1}, we conclude that the computational cost of the proposed MDI-LR algorithm scales at most polynomially in the order of  $O(N^2d^3)$.
\begin{figure}[H]
	\centerline{
		\includegraphics[width=1.75in,height=1.65in]{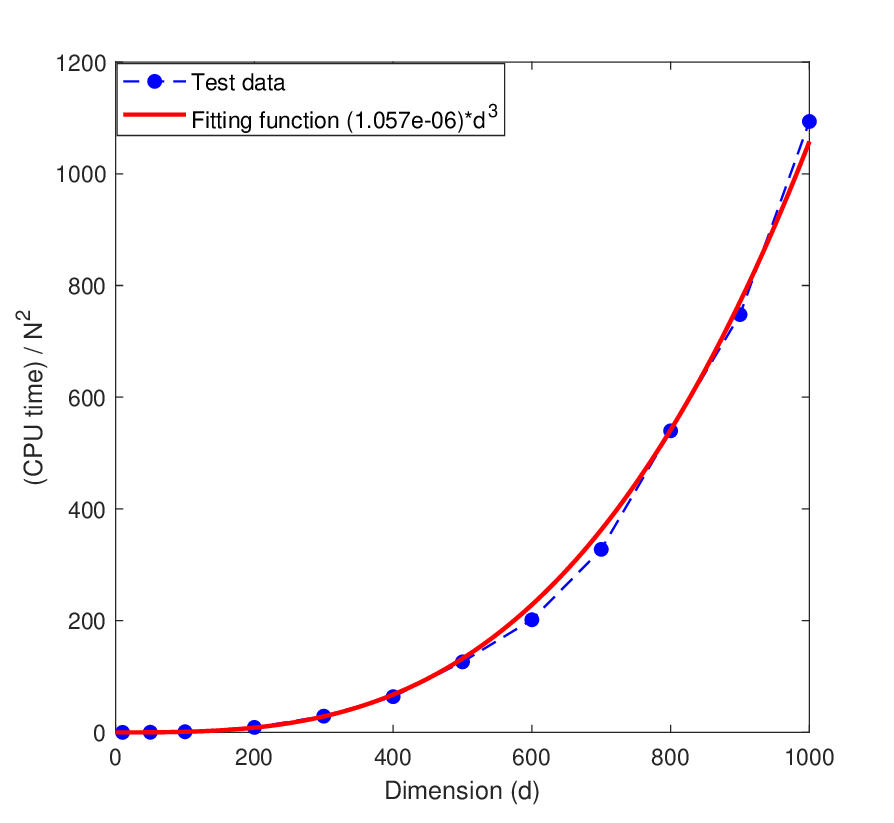}
		\includegraphics[width=1.75in,height=1.65in]{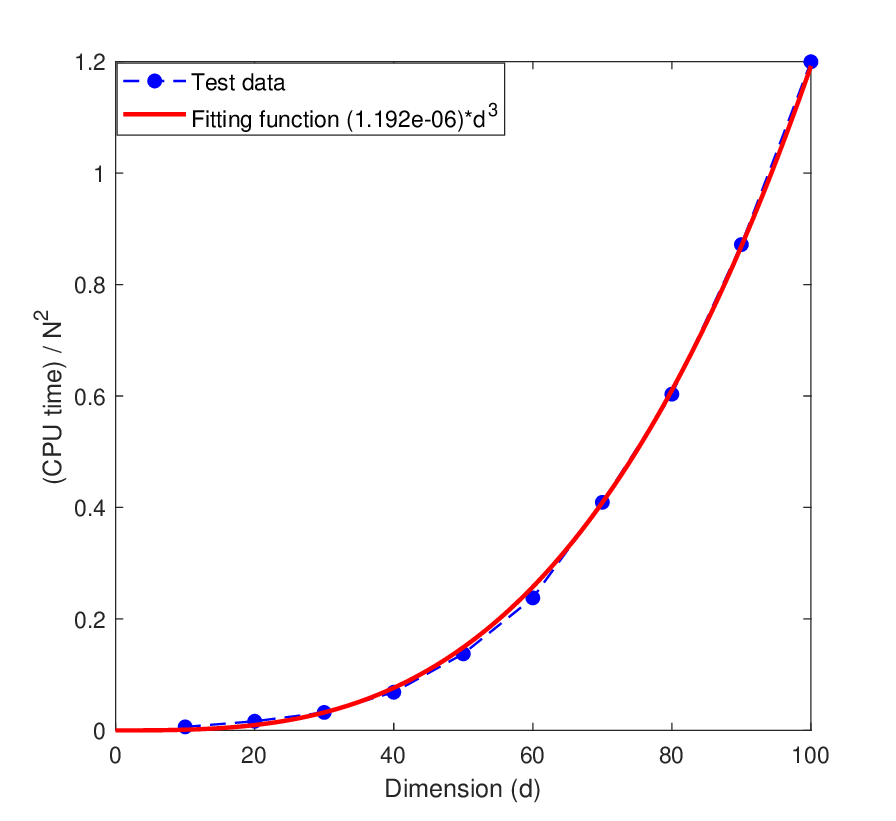}
		\includegraphics[width=1.75in,height=1.65in]{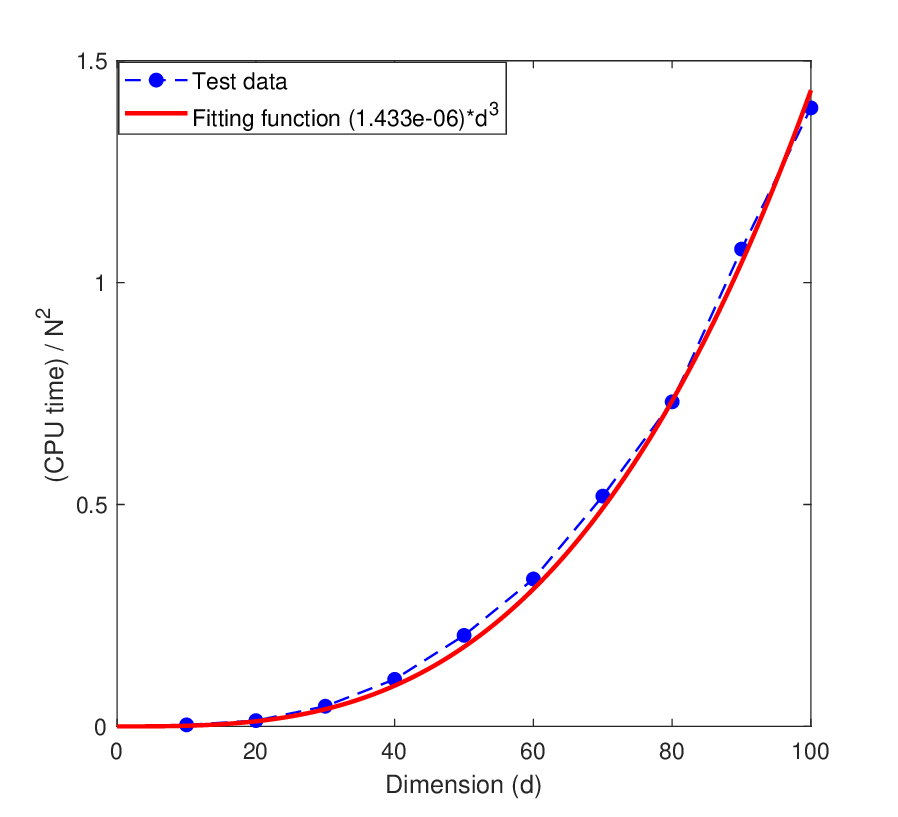}
	}
\end{figure}
\begin{figure}[H]
		\centerline{
		\includegraphics[width=1.75in,height=1.65in]{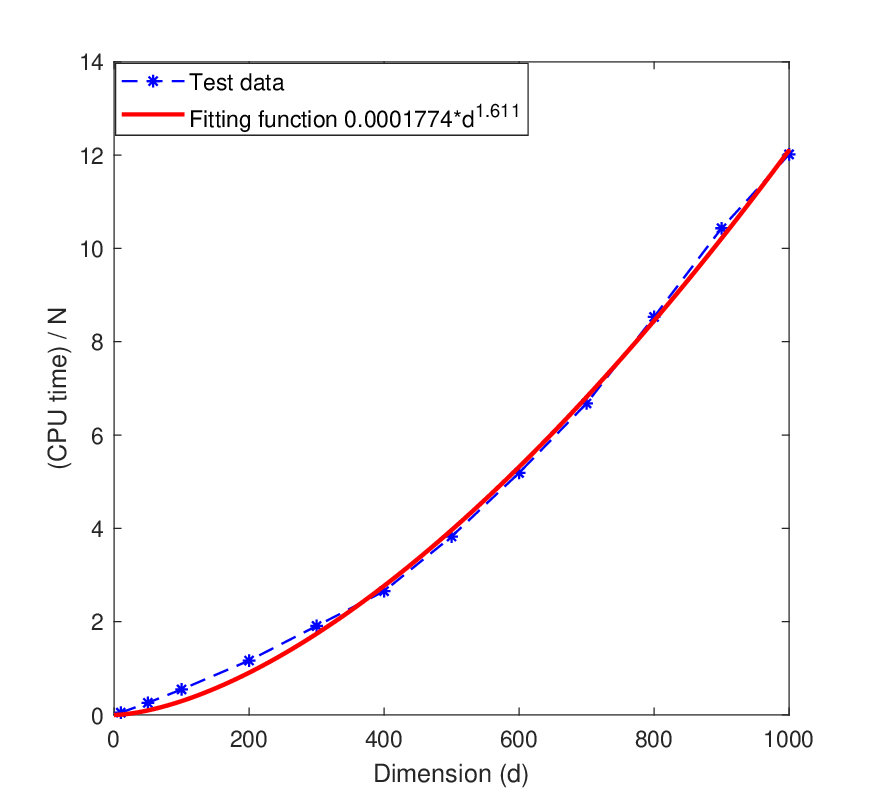}
		\includegraphics[width=1.75in,height=1.65in]{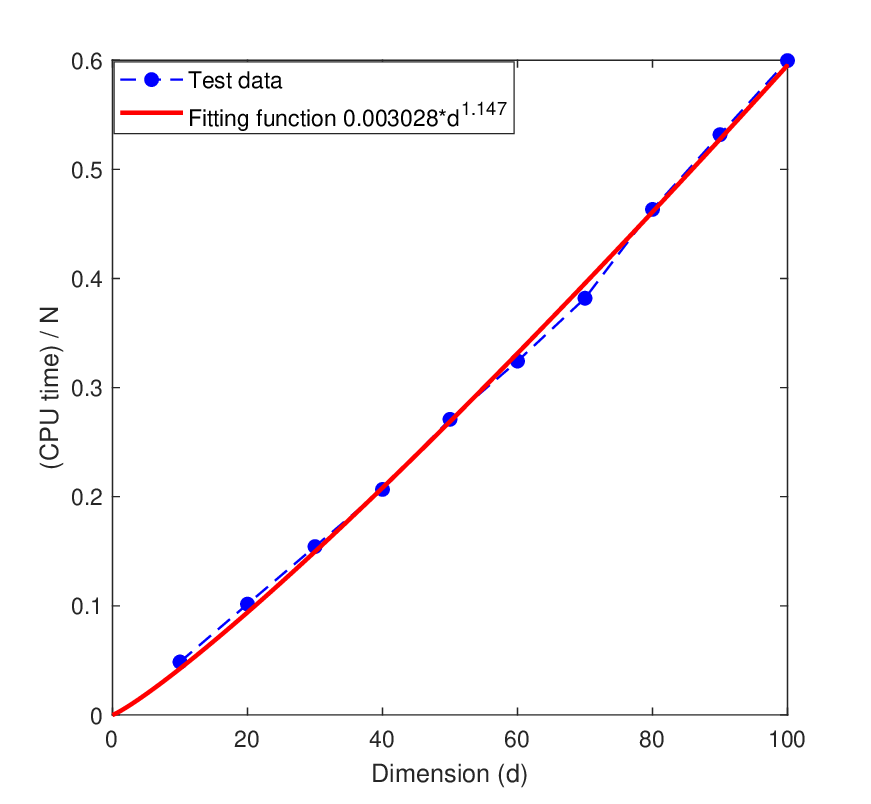}
		\includegraphics[width=1.75in,height=1.65in]{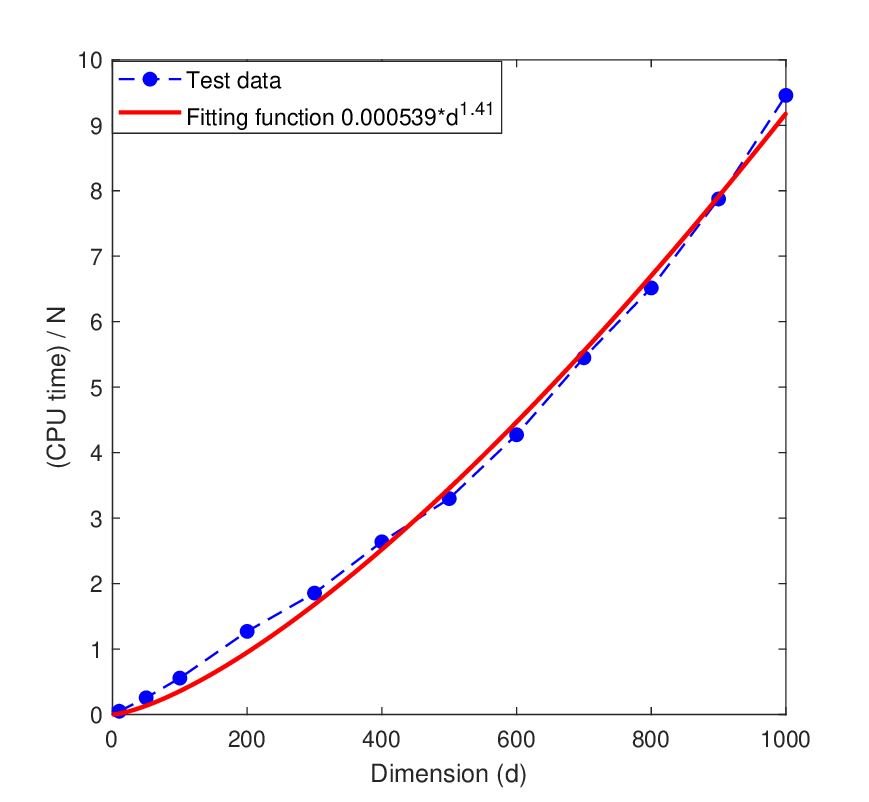}
	}	
	\centerline{
		\includegraphics[width=1.75in,height=1.65in]{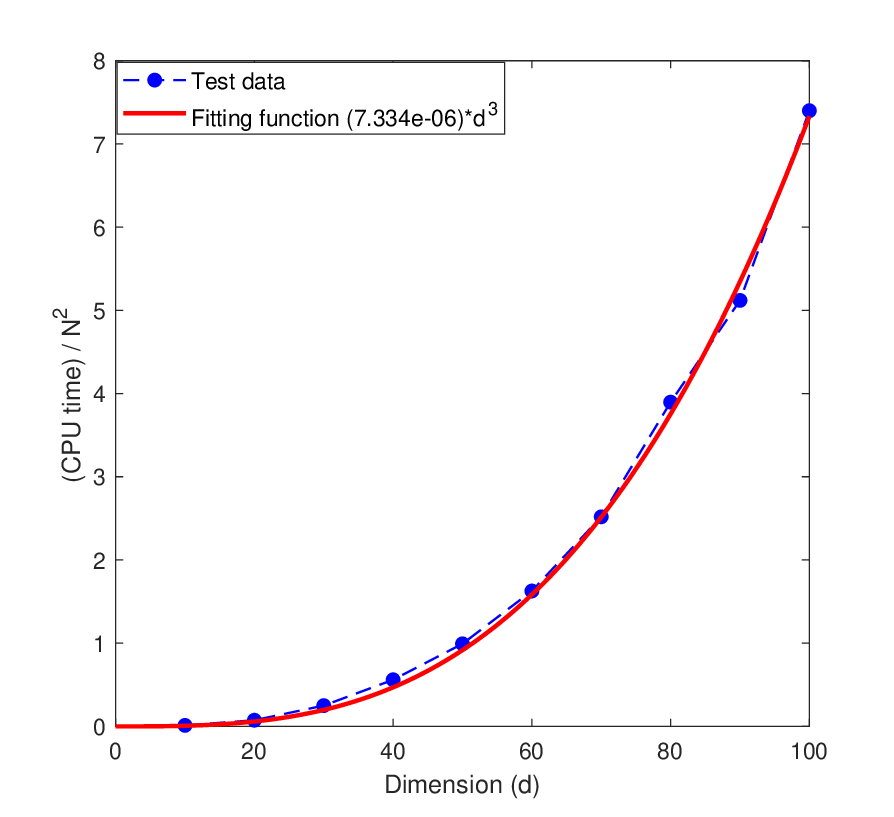}
		\includegraphics[width=1.75in,height=1.65in]{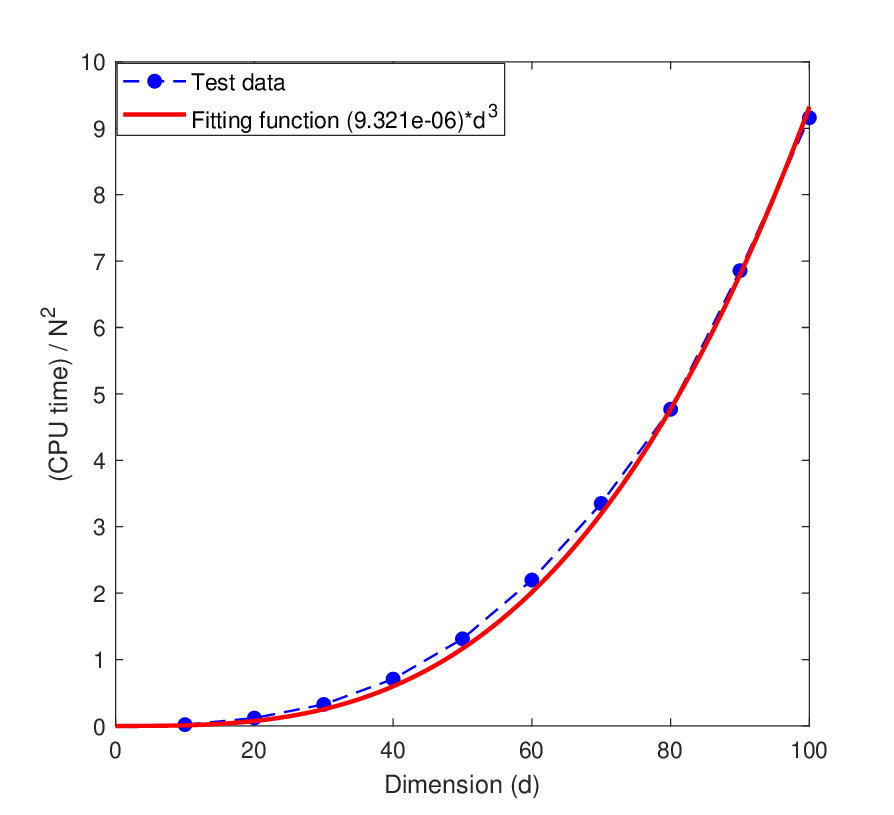}
		\includegraphics[width=1.75in,height=1.65in]{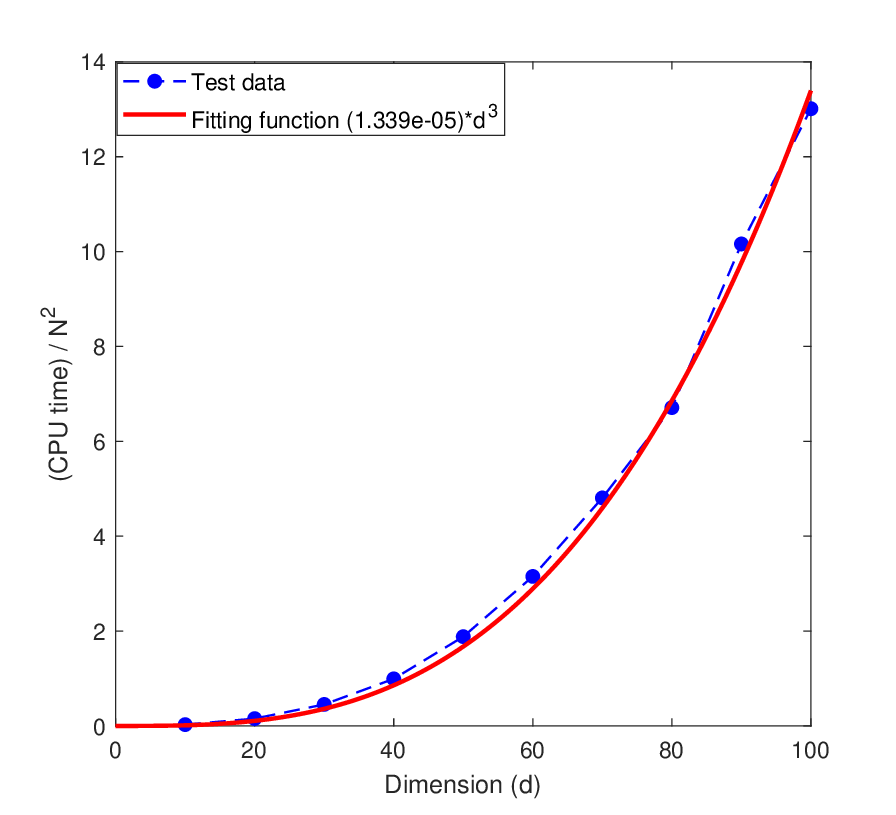}
	}
	\centerline{
		\includegraphics[width=1.75in,height=1.65in]{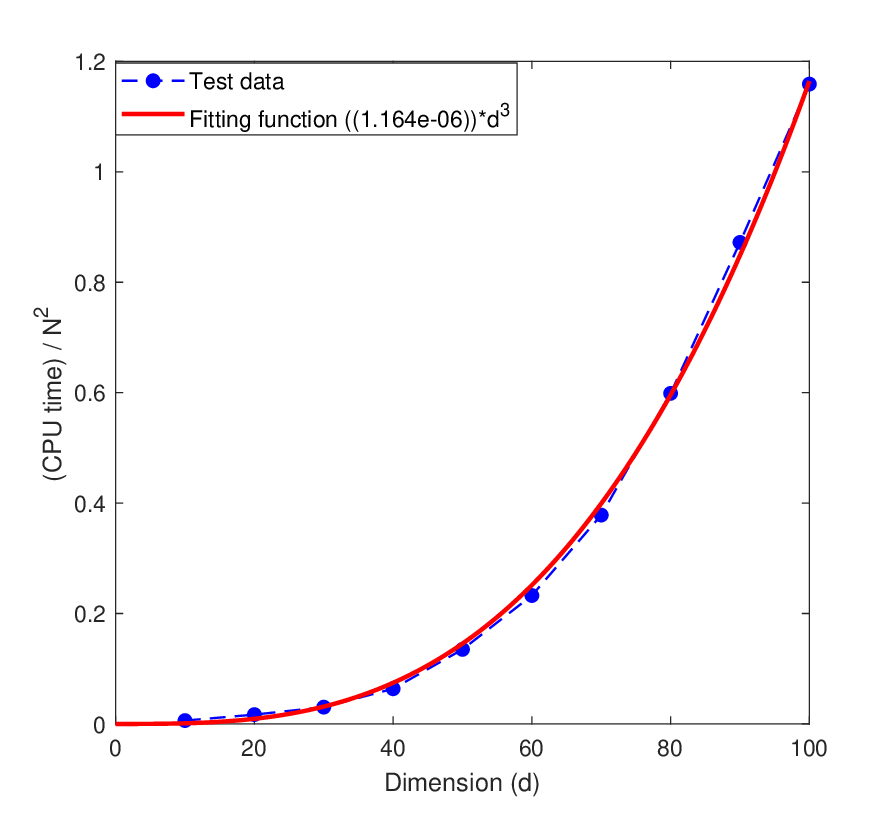}
		\includegraphics[width=1.75in,height=1.65in]{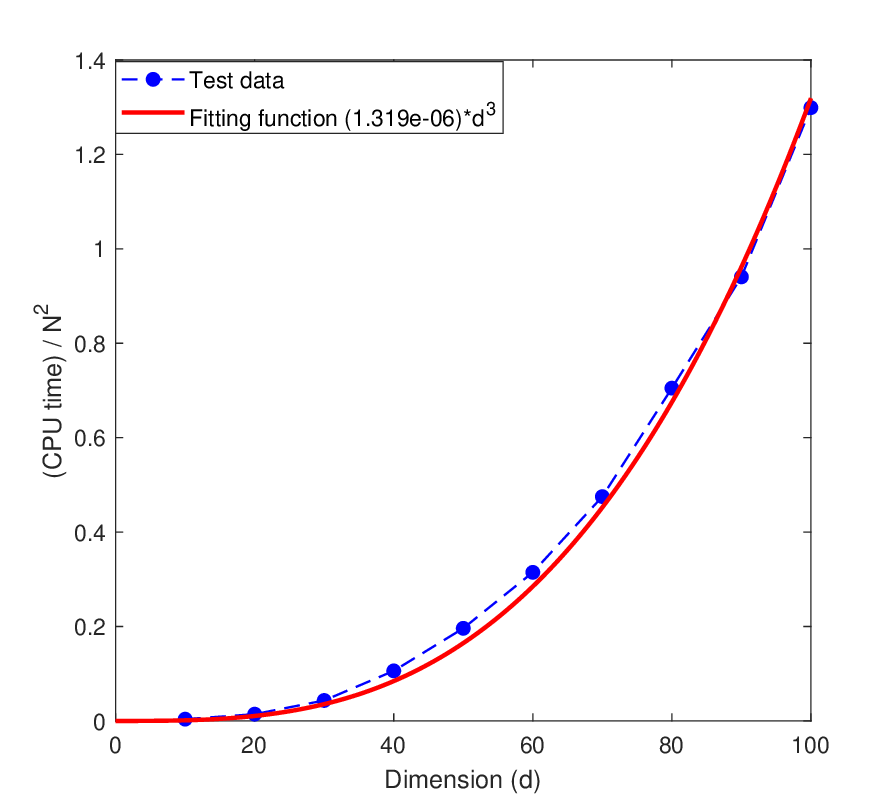}
		\includegraphics[width=1.75in,height=1.65in]{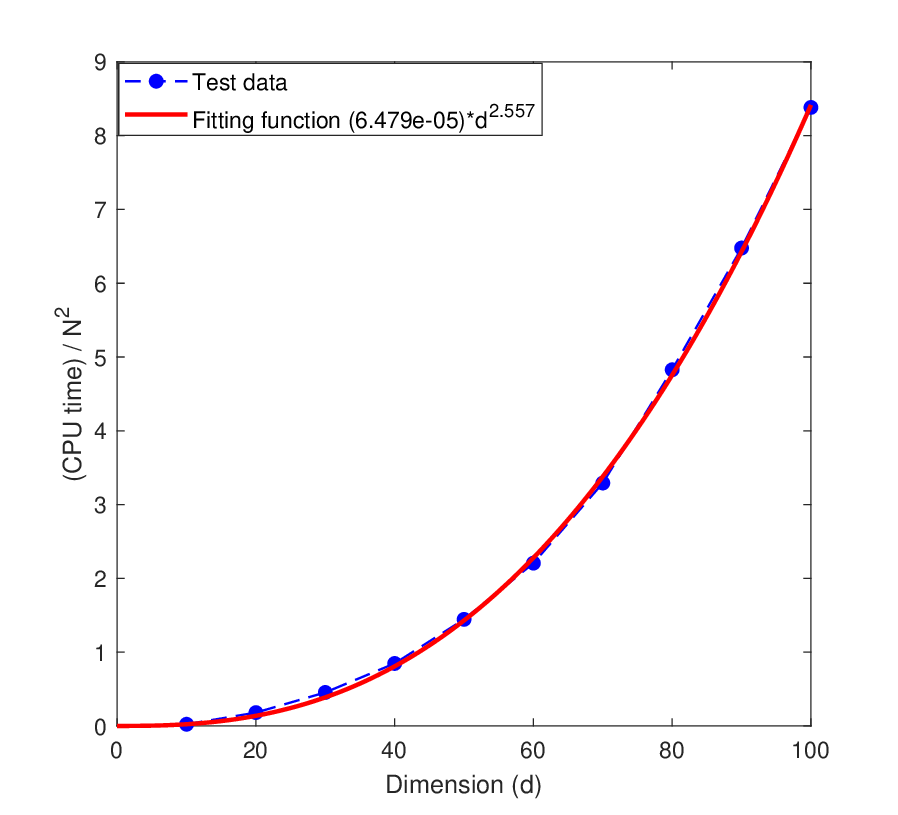}
	} 
	\centerline{
		\includegraphics[width=1.75in,height=1.65in]{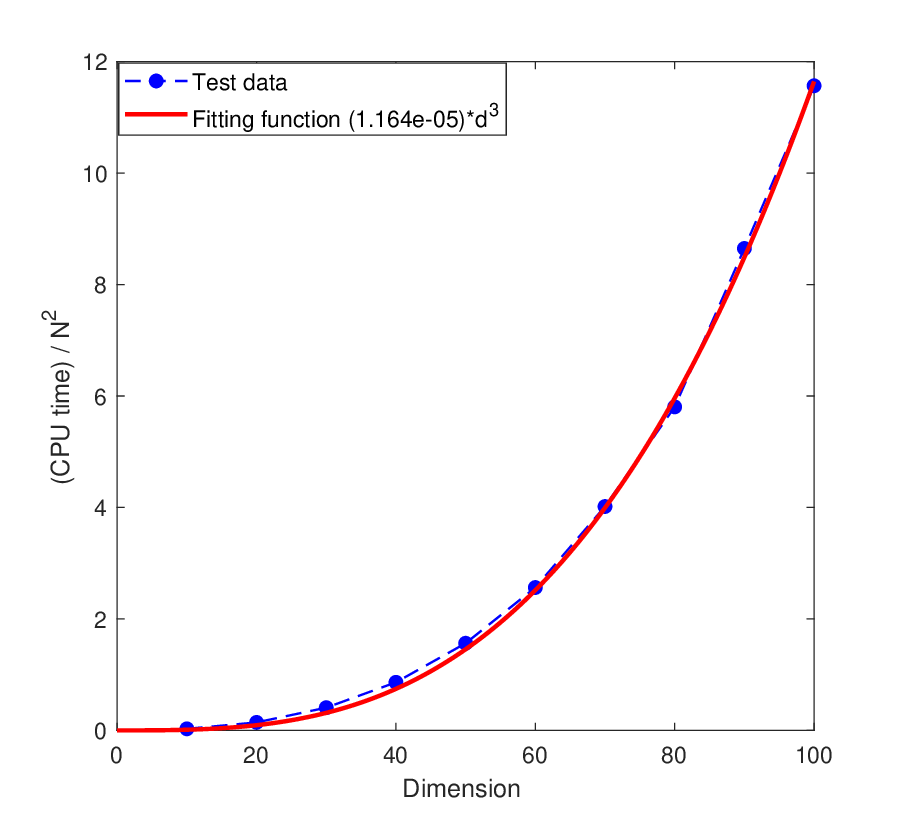}
		\includegraphics[width=1.75in,height=1.65in]{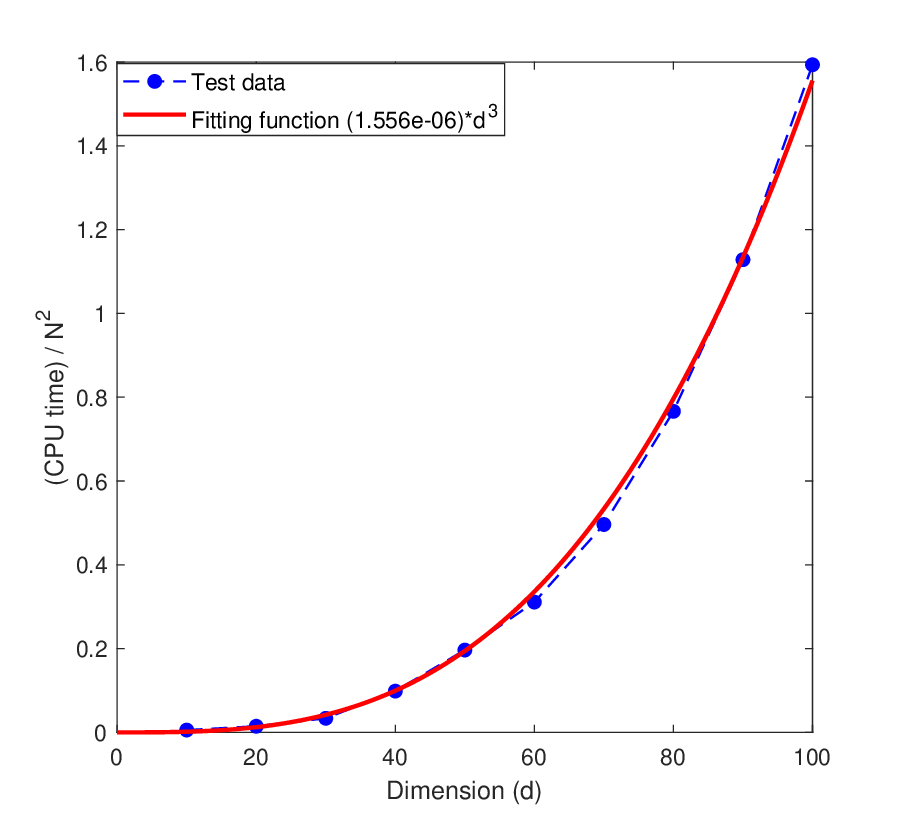}
		\includegraphics[width=1.75in,height=1.65in]{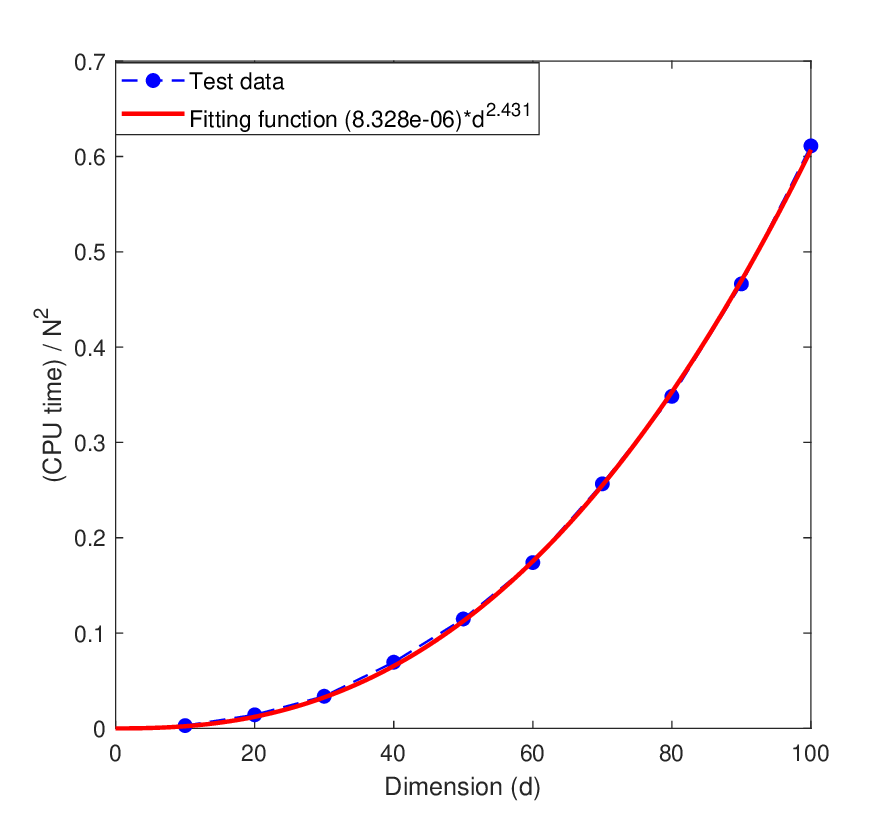}
	} 
	\caption{The relationship between the CPU time and dimension $d$.}	\label{fig:7} 
\end{figure}
\section{Conclusions}\label{sec-7}
In this paper, we introduced an efficient and fast algorithm MDI-LR  for implementing QMC lattice rules for high-dimensional numerical integration. It is based on the idea of converting and extending them into tensor product rules by affine transformations, and adopting the multilevel dimension iteration approach which 
computes  the function evaluations (at the integration points) in the multi-summation in cluster and iterates along each (transformed) coordinate direction so that a lot of computations can be reused. Based on numerical simulation results,  it was concluded that the computational complexity of the algorithm MDI-LR  (in terms of CPU time) grows at most cubically in the dimension $d$ and has an overall growth rate $O(d^3N^2)$, which suggests that the proposed algorithm MDI-LR  can effectively mitigate the curse of dimensionality in high-dimensional numerical integration, making the QMC lattice rule not only competitive but also practically useful for high dimension numerical integration.  Extensive numerical tests were provided to guage the performance of the algorithm MDI-LR  and to compare its performance with the standard QMC lattice rules.  Extensions to general Monte Carlo methods and applications of the proposed MDI-LR algorithm for solving  high-dimensional PDEs will be explored and reported in a forthcoming work.


\end{document}